\newcommand{\R}{\mathbb{R}}
\newcommand{\Q}{\mathbb{Q}}
\newcommand{\N}{\mathbb{N}}
\newcommand{\C}{\mathbb{C}}
\newcommand{\Z}{\mathbb{Z}}
\newcommand{\uhp}{\mathbb{H}}
\newcommand{\im}{\textnormal{Im}}
\newcommand{\re}{\textnormal{Re}}
\newcommand{\cB}{\mathcal{B}}
\newcommand{\cW}{\mathcal{W}}
\newcommand{\cM}{\mathcal{M}}
\newcommand{\cO}{\mathcal{O}}
\newcommand{\Oc}[1]{\mathcal{O}^{#1}(\overline{\C}\setminus [0,1])}
\newcommand{\Occ}[2]{\mathcal{O}^{#1}(\overline{\C}\setminus #2)}
\newcommand{\Li}[1]{\textnormal{Li}_2\left(#1\right)}
\newcommand{\Gauss}{G}
\newcommand{\B}[2]{B_{#1,#2}}
\newcommand{\T}[2]{T_{#1,#2}}
\newcommand{\Ss}[1]{S_{#1}}
\newcommand{\be}[2]{\beta^{(#1)}_{#2}}
\newcommand{\x}[2]{x^{(#1)}_{#2}}
\newcommand{\wL}{\overline{L}}
\newcommand{\wsum}{\overline\sum}
\newcommand{\wsuml}[1]{\overline{\sum_{#1}}}
\newcommand{\wvarphi}{\overline{\varphi}}
\newcommand{\dbtilde}[1]{\accentset{\approx}{#1}}
\newcommand{\veps}{\varepsilon}
\DeclareMathOperator{\SL}{SL}
\DeclareMathOperator{\GL}{GL}
\newtheorem{thm}{Theorem}[section]
\newtheorem{cor}[thm]{Corollary}
\newtheorem{lem}[thm]{Lemma}
\newtheorem{prop}[thm]{Proposition}
\theoremstyle{definition}
\newtheorem{rmk}[thm]{Remark}
\numberwithin{equation}{section}
\date{\today}
\begin{document}

\title{Regularity properties of $k$-Brjuno and Wilton functions}

\author{Seul Bee Lee}
\address{Centro di Ricerca Matematica Ennio De Giorgi, Scuola Normale Superiore, Piazza dei Cavalieri 3, 56126 Pisa, Italy}
\email{seulbee.lee@sns.it}

\author{Stefano Marmi}
\address{Scuola Normale Superiore, Palazzo della Carovana, Piazza dei Cavalieri 7, 56126 Pisa, Italy}
\email{stefano.marmi@sns.it}

\author{Izabela Petrykiewicz}
\email{ipetrykiewicz@gmail.com}

\author{Tanja I. Schindler}
\address{Centro di Ricerca Matematica Ennio De Giorgi, Scuola Normale Superiore, Piazza dei Cavalieri 3, 56126 Pisa, Italy}
\email{tanja.schindler@sns.it}

\begin{abstract}
   We study functions related to the classical Brjuno function, namely $k$-Brjuno functions and the Wilton function. Both appear in the study 
of boundary regularity properties of (quasi) modular forms and their integrals.
We consider various possible versions of them, based on the $\alpha$-continued fraction developments.
We study their BMO regularity properties and their behaviour near rational numbers of their finite truncations. 
We then complexify the functional 
equations which they fulfill and we construct analytic extensions of the $k$-Brjuno and of the Wilton function to the upper half plane. We study their boundary behaviour using an extension of the continued fraction algorithm to the complex plane. We also prove that the harmonic conjugate of 
the real $k$-Brjuno function is continuous at all irrational numbers and has a decreasing jump of $\pi/q^k$ at rational points $p/q$.
\end{abstract}

\subjclass[2020]{30B70, 37F50, 32A50, 46F15, 32A37, 11F03}
\keywords{Brjuno function; Brjuno condition; Wilton function; bounded mean oscillation; complex boundary behaviour; hyperfunctions; modular group}
\thanks{The first and the fourth author acknowledge the support of the Centro di Ricerca Matematica Ennio de
Giorgi. The first, second and third author acknowledge the support of UniCredit Bank R\&D group for financial support through
the ‘Dynamics and Information Theory Institute’ at the Scuola Normale Superiore. The second and the
fourth author acknowledge the support through the PRIN Grant ”Regular and stochastic behaviour in
dynamical systems” (PRIN 2017S35EHN)}

\maketitle




\section{Introduction}

Let $\Gauss$ denote \emph{the Gauss map} defined on the interval $[0,1)$ by $\Gauss(0)=0$ and $\Gauss(x)=\left\{\frac{1}{x}\right\}$ otherwise, where $\{x\}=x-\lfloor x\rfloor$
and let
$$\beta_j(x) = \prod_{i=0}^j \Gauss^i(x)$$ 
for $j\ge 0$ with the convention $\beta_{-1}(x) = 1$. 

\subsection{Brjuno function}
In 1988, Yoccoz introduced the following function - now called \emph{Brjuno function} - defined for irrational
numbers $x\in[0,1]\setminus\Q$ as
\begin{equation*}\label{eq:real Brjuno}
B_1(x)=\sum_{n=0}^{\infty}\beta_{n-1}(x)\, \log\left(\frac{1}{\Gauss^n(x)}\right),
\end{equation*}
see \cite{Y, MMY1, MMY2}, see Figure~\ref{fig:BW} for its graph.

Let $\frac{p_n(x)}{q_n(x)}$ denote the $n$th convergent of $x$ with respect to its continued fraction expansion. 
The series $B_1(x)$ converges if and only if
\begin{equation*}
\sum_{n=0}^{\infty} \frac{\log(q_{n+1}(x))}{q_n(x)} < \infty.
\end{equation*}
This condition is called \emph{Brjuno condition} and was introduced by Brjuno in the study of certain problems in dynamical systems, see \cite{B,Br2}.
The points of convergence are called \emph{Brjuno numbers}.
The importance of Brjuno numbers comes from the study of analytic small
divisors problems in dimension one. Indeed, extending previous fundamental work of C.L. Siegel \cite{siegel1942}, 
Brjuno proved that all germs of holomorphic diffeomorphisms
of one complex variable with an indifferent fixed point
with linear part $e^{2\pi i x}$ are linearisable if $x$ is a Brjuno number. 
Conversely, in 1988 J.-C. Yoccoz \cite{Y, yoccoz1995}
 proved that this condition is also necessary. Similar results hold
for the local conjugacy of analytic diffeomorphisms of the circle \cite{yoccoz2002} 
and for some complex area–preserving maps \cite{marmi_1990, davie1994critical}. 
Moreover the Brjuno function provides a continuous interpolation of 
the radius of convergence of the quadratic polynomial \cite{buff2006brjuno} and the interpolation is conjectured to be 
H\"older continuous with exponent $1/2$ \cite{MMY1}. This has been recently proved for high-type irrational numbers  \cite{cheraghi2015proof}.
The Brjuno condition has been of interest also in different contexts. For instance, it is conjectured that it is optimal for the existence of real analytic invariant circles in the standard family \cite{mackay_exact_1988, mackay_erratum, marmi1992standard}. See also \cite{berretti2001bryuno, gentile2015invariant} and references therein for related results.

Furthermore, the Brjuno function is $\Z$--periodic and
satisfies the functional equation 
$$
B_1(x)=-\log(x)+xB_1\left(\frac{1}{x}\right)
$$
for $x\in (0,1)$.
The second author together with Moussa and Yoccoz investigated the regularity properties of $B_1$ in \cite{MMY1} and later 
constructed an analytic extension of $B_1$ to the upper-half plane $\uhp = \{z\in\C\colon \im z>0\}$
\cite{MMY3}. Let $T$ denote the linear operator 
\begin{equation}\label{eq: op T Gauss}
 Tf(x)=xf\Big(\frac{1}{x}\Big)
\end{equation}
acting, for example, on measurable $\Z$--periodic functions on $\R$. Then in all $L^p$ spaces the Brjuno function is the solution of the linear equation 
\begin{equation}\label{eq: brjuno fe}
[(1-T)B_1](x) = -\log x\, .
\end{equation}
By exploiting the fact that the operator $T$ as in \eqref{eq: op T Gauss} acting on $L^p$ spaces has spectral radius strictly
smaller than $1$, one can indeed obtain \eqref{eq:real Brjuno} by a Neumann series for $(1-T)^{-1}$, see \cite{MMY1} for details.

Local properties of the Brjuno function have been recently investigated by M. Balazard and
B. Martin \cite{BM1} and its multifractal spectrum was determined by S. Jaffard and B. Martin in \cite{jaffard_multifractal_2018}.

\subsection{$k$-Brjuno functions}
For the following, for $k\ge 2$ even, let $E_k$ be the Eisenstein series of weight $k$ defined in the upper-half plane $\uhp$. 
Then its Fourier expansion is given by 
\begin{equation*}
 E_k(z)=1-\frac{2k}{b_k}\sum_{n=1}^{\infty}\sigma_{k-1}(n)e^{2\pi i nz},
\end{equation*}
where $b_k$ is the $k$th Bernoulli number and $\sigma_{k-1}(n)=\sum_{d|n} d^{k-1}.$ For all $k\geq4$, $E_k$ is modular of weight $k$ under the action of $\SL_2(\Z)$, and $E_2$ is
quasi-modular of weight $2$ under the action of $\SL_2(\Z)$, see for example \cite{Z2}.
The function $E_2$ can be viewed as a modular (or Eichler) integral on $\SL_2(\Z)$ of weight $2$ with the rational period function $-\frac{2\pi i}{z}$, see for example \cite{Kn}.

For $k \geq 2$ even and $z \in \uhp$, denote $\varphi_k(z) = \sum_{n=1}^\infty \frac{\sigma_{k-1}(n)}{n^{k+1}}e^{2\pi i nz} .$
We have that
$$\sum_{n=1}^{\infty}\sigma_{k-1}(n)e^{2\pi i nz} = \left(\frac{1}{2\pi i}\frac{\partial}{\partial z}\right)^{k+1}\varphi_k(z) ,$$
and
$$\varphi_k(z) = \frac{B_k(2\pi i)^{k+1}}{k!2k} \int_{i\infty}^z(z-t)^k(E_k(t)-1)dt .$$
Consider the imaginary part of $\varphi_k$
\begin{equation*}
 F_{k}(x)= \sum_{n=1}^\infty \frac{\sigma_{k-1}(n)}{n^{k+1}} \sin(2\pi n x) \quad \text{for } x\in\R.
\end{equation*}

Analytic properties, differentiability and H\"older regularity exponent, of the function $F_k$ (and the real part of $\varphi_k$) were studied by the third author. It has been proved that for $F_k$ the differentiability is related to a condition resembling the Brjuno condition.
Considering the special case $k=2$, the third author proved that if $\sum_{n=0}^{\infty} \frac{\log(q_{n+1}(x))}{q_n^2(x)} < \infty$ 
and $\lim_{n \to \infty} \frac{\log (q_{n+4}(x))}{q_n^2(x)} =0$, then $F_{2}$ is differentiable at $x\in\R\setminus\Q$, whereas if
$\sum_{n=0}^{\infty} \frac{\log(q_{n+1}(x))}{q_n^2(x)}$ diverges, then $F_{2}$ is not differentiable at $x\in\R\setminus\Q$. 
It has been conjectured that for all $k\in \N$ even, $F_k$ is differentiable at $x\in\R\setminus\Q$ if and only if it fulfills the \emph{ $k$-Brjuno condition}
\begin{equation}\label{eq: kBrjuno cond}
 \sum_{n=0}^{\infty} \frac{\log(q_{n+1}(x))}{q_n^k(x)} < \infty,
\end{equation}
see \cite{P2, P3}.
The occurrence of a condition of this type motivates the following definition.

\medskip

For $k\in \N$, let 
\begin{equation}\label{eq: Bk with Gauss}
B_k(x)=\sum_{n=0}^{\infty}(\beta_{n-1}(x))^k\log\left(\frac{1}{\Gauss^n(x)}\right)
\end{equation}
be called \emph{$k$-Brjuno function}. 
From this equation we already get the implicit definition in terms of an analogue of the functional equation of the Brjuno function: if $x\in (0,1)$
 $$
B_k(x)= -\log(x)+x^k\cdot B_k(\Gauss(x))\, ,
$$
see \cite{MMY1, MMY2}.  
The $k$--Brjuno function converges at an irrational $x$ if and only if \eqref{eq: kBrjuno cond} holds, see also Proposition \ref{prop: kBrjuno equiv to prop2.3} for an even stronger statement about the relation between the $k$-Brjuno function and \eqref{eq: kBrjuno cond}. 
 Obviously, for $k=1$, the function in \eqref{eq: Bk with Gauss} gives the Brjuno function introduced before.

\subsection{$\alpha$-continued fractions}
Instead of considering the $k$-Brjuno function with respect to the Gauss map as in \eqref{eq: Bk with Gauss},  it is also possible to use $\alpha$-continued fractions instead.
The classical Brjuno function associated to $\alpha$-continued fractions was already investigated in \cite{MMY1}.
Let $\alpha\in\left[\frac{1}{2},1\right]$ and let $A_\alpha:(0,\alpha)\to[0,\alpha]$ the transformation of \emph{the $\alpha$-continued fractions} being given by 
\begin{equation}
A_{\alpha}(x)=\left|\frac{1}{x}-\left\lfloor \frac{1}{x}-\alpha+1\right\rfloor\right|.\label{eq: def Aalpha}
\end{equation}
For $\alpha=1$, we obtain the Gauss map associated to the regular continued fraction transformation and for $\alpha=1/2$, we obtain the transformation associated to the nearest integer continued fractions.
Nakada \cite{N} was the first to consider all these types of continued fractions as a one-parameter family, however he considered the `unfolded' version of the $\alpha$-continued fraction which is defined by the Gauss map
$\widetilde{A_\alpha}(x) = \frac{1}{x}-\left\lfloor\frac{1}{x}-\alpha+1\right\rfloor.$
The version as in \eqref{eq: def Aalpha} was little later considered in \cite{TanakaIto}. 

Further, let
\begin{equation*}
\B{k}{\alpha}(x)=\sum_{n=0}^{\infty} \left(\be{\alpha}{n-1}(x)\right)^k \log\left(\frac{1}{A_\alpha^n(x)}\right),
\end{equation*}
where $\be{\alpha}{j}(x)=\prod_{i=0}^j A_\alpha^i(x)$ for $j\ge 0$ and $\be{\alpha}{-1}(x)=1$,
be the generalisation of \eqref{eq: Bk with Gauss} in the sense that we consider the $k$-Brjuno function not only for the Gauss transformation but also for other $\alpha$-continued fraction transformations $A_{\alpha}$ with $\alpha\in [1/2, 1]$. 

For given $\alpha\in [1/2,1]$, any $x\in (0,1]$ has the $\alpha$-continued fraction expansion given by $$x = \dfrac{1}{a_1+\dfrac{\epsilon_1}{\ddots+\dfrac{\epsilon_{j-1}}{a_j+\ddots}}},$$
where $a_j := a_j^{(\alpha)}(x):=\left\lfloor \frac{1}{A_{\alpha}^{j-1}(x)}-\alpha+1\right\rfloor$ and $\epsilon_j:=\epsilon_j^{(\alpha)}(x)$ is the sign of $\frac{1}{A_{\alpha}^{j-1}(x)} - a_j$.

Related to \eqref{eq: op T Gauss} we define an operator $\T{k}{\alpha}$ acting on $\Z$--periodic measurable functions $f$
such that $f(-x)=f(x)$ for a.e. $x\in (0,1-\alpha )$
 as
\begin{equation}\label{eq:k-Brjuno operator}
\T{k}{\alpha} f(x)=x^k f\left(\frac{1}{x}\right), \quad x\in (0,\alpha).
\end{equation}
It is the operator $T^{(\alpha)}_\nu$ which is introduced in \cite{MMY1}, where $\nu$ corresponds to the exponent $k$ in \eqref{eq:k-Brjuno operator}.
It is understood that the function $\T{k}{\alpha} f$ is completed outside the interval $(0,\alpha )$ by imposing to $\T{k}{\alpha} f$  the same parity and periodicity conditions 
imposed to $f$.

Then we have
\begin{equation}\label{eq:brjuno func eq}
[(1-\T{k}{\alpha})\B{k}{\alpha}](x) = -\log x, \quad x\in(0,\alpha)
\end{equation}
which follows by a simple calculation.

\subsection{Wilton function}
Next we consider the related concept of the \emph{Wilton function} which is given by
\begin{equation*}
W(x)=\sum_{n=0}^{\infty} (-1)^n \beta_{n-1}(x)\log\left(\frac{1}{\Gauss^n(x)}\right), 
\end{equation*}
namely by the alternate signs version of the Brjuno function series \eqref{eq:real Brjuno}.
\begin{figure}
$\begin{matrix}
\includegraphics[width=7.9cm]{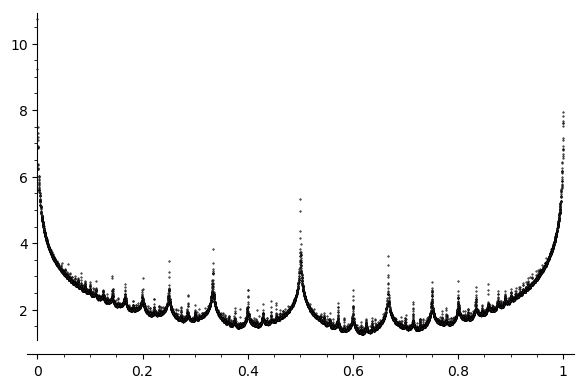}
&
\includegraphics[width=7.9cm]{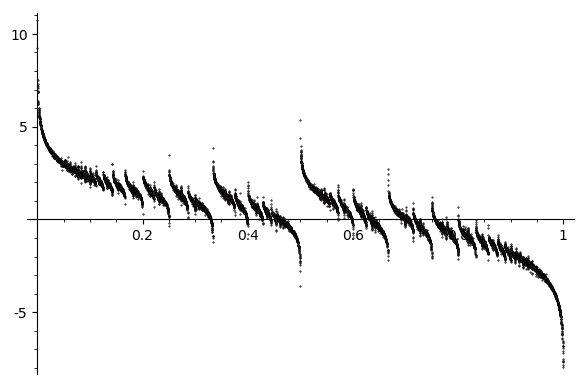}
\end{matrix}
$
\caption{Numerical computation of the Brjuno function $B$ (left) and of the Wilton function $W$ (right) when $\alpha=1$. The asymmetric 
logarithmic singularities at rational points provide an intuitive justification for $W$ not belonging to the BMO space, see Section \ref{sec:BMO}.}
\label{fig:BW}
\end{figure}
(see Figure~\ref{fig:BW} for its graph). It converges if and only if it fulfills the \emph{Wilton condition}
\begin{equation*}
\left|\sum_{n=0}^\infty (-1)^n \frac{\log(q_{n+1}(x))}{q_n(x)}\right| < \infty,
\end{equation*}
see \cite[Prop.~7]{BM2} and Remark \ref{rmk: Wilton cond} for an even stronger connection between the Wilton function and the Wilton condition.
The points of convergence are called \emph{Wilton numbers}
and appear in the work of Wilton, see \cite{W}. Clearly all Brjuno numbers are Wilton, but not vice versa (it is not difficult to build counterexamples 
by using the continued fraction).

The function $W$ satisfies the functional equation for $x\in (0,1)$ being a Wilton number: 
\begin{equation*}
W(x)=\log\left(\frac{1}{x}\right)-xW(\Gauss(x)),
\end{equation*}
which by using the same linear operator $T$ as in \eqref{eq: op T Gauss} can be written as 
\begin{equation}\label{eq: wilton fe}
[(1+T)W](x) = -\log x.
\end{equation}

We can extend the Wilton function in the same way we did for the $k$-Brjuno functions, i.e.\ replacing the Gauss map with the transformation of the $\alpha$-continued fractions: 
for $\alpha\in\left[\frac{1}{2},1\right)$ and for all irrational $x$ we define
\begin{equation*}
W_\alpha(x)=\sum_{n=0}^{\infty} (-1)^n \be{\alpha}{n-1}(x) \log\left(\frac{1}{A_\alpha^n(x)}\right).
\end{equation*}

We then have 
$$
[(1-\Ss{\alpha})W_\alpha](x) = -\log x, \quad x\in(0,\alpha),
$$
where the operator $\Ss{\alpha}=-\T{1}{\alpha}$. Also in this case it is understood that 
$\Ss{\alpha}$ acts on $\Z$--periodic measurable functions $f$
such that $f(-x)=f(x)$ for a.e. $x\in (0,1-\alpha )$.

The Wilton function and its primitive have been studied recently by Balazard and Martin
in terms of its convergence properties \cite{balazard_equations_2019} and in the context of the Nyman and Beurling criterion, see \cite{BM1, BM2} and \cite{BBLS}.

\subsection{BMO properties and complex $k$-Brjuno and Wilton functions.}

In \cite{MMY1} the real regularity properties of the Brjuno function were systematically investigated by exploiting the associated functional equation, culminating with the proof that the Brjuno function belongs to the space $\textnormal{BMO}$ of functions with Bounded Mean Oscillation (we refer to 
\cite{garnett2007bounded, garcia2011weighted} for its definition and properties). Here (see Section \ref{sec:BMO})
we extend those results to the $k$-Brjuno function, 
which turn out to belong to BMO,  and to the Wilton function, which does not. Moreover we consider their generalizations obtained by replacing the Gauss map with $\alpha$-continued fractions \cite{N}, $\alpha \in \left[\frac{1}{2},1\right]$ and we prove that for all $\alpha$ the associated $k$--Brjuno functions belong to BMO and that the same holds for the associated Wilton functions when $\alpha$ is restricted to belong to the interval $\left[\frac{1}{2},\frac{\sqrt{5}-1}{2}\right]$.

By Fefferman's duality theorem, see for example \cite[p.\ 39]{sarason}, one can add an $L^\infty$ function to a BMO function and obtain that the harmonic conjugate of the sum will be an essentially bounded function. One is then led to look for a periodic holomorphic function defined on the upper half plane whose imaginary parts, when looked upon $\R$, is the original function. As in \cite{MMY3} we make this construction for the $k$-Brjuno functions and for the Wilton function. The associated {\it complex $k$-Brjuno functions} $\mathcal{B}_k$ and {\it complex Wilton function} $\mathcal{W}$
and their properties are summarized in the following Theorem: 
\begin{thm}\label{thm:summarycomplex}
\begin{enumerate} 
\item The complex $k$-Brjuno function $\mathcal{B}_k\, , \, \uhp\rightarrow\C$ is given by the series
\begin{align}
\cB_k(z) 
&= - \sum_{p/q\in\Q} \det\begin{pmatrix}p'&p\\q'&q\end{pmatrix}^{k+1}\notag\\
&\qquad\cdot \Bigg\{\frac{1}{\pi}\left[ (p'-q'z)^k \Li{\frac{p'-q'z}{qz-p}} - (q''z-p'')^k\Li{\frac{p''-q''z}{qz-p}}\right]\notag\\
& \quad\qquad + \sum_{n=0}^k \frac{\det \left(\begin{smallmatrix}p'&p\\q'&q\end{smallmatrix}\right)^{n} }{n\pi q^n} 
\Bigg[(p'-q'z)^{k-n} \, \left(\log\left(1+\frac{q'}{q}\right)-\sum_{i=1}^{n-1}\frac{1}{i}\left(\frac{1}{(1+q'/q)^i}-1\right)\right) \notag\\ 
&\qquad \qquad - (q''z-p'')^{k-n} \,  \left(\log\left(1+\frac{q''}{q}\right)-\sum_{i=1}^{n-1}\frac{1}{i}\left(\frac{1}{(1+q''/q)^i}-1\right)\right)\Bigg] \Bigg\},\label{eq: complex kBrjuno}
\end{align}
where $[\frac{p'}{q'},\frac{p''}{q''}]$ is the Farey interval such that $\frac{p}{q}=\frac{p'+p''}{q'+q''}$ (with the convention $p'=p-1$, $q'=1$, $p''=1$, $q''=0$ if $q=1$) and $\Li{z}$ is the dilogarithm. 
The complex Wilton function $\mathcal{W}\, , \, \uhp\rightarrow\C$ is given by the series
\begin{align}
 \cW(z)  = -\frac{1}{\pi}\sum_{p/q\in\Q} &\Bigg\{\det\begin{pmatrix}p'&p\\q'&q\end{pmatrix}(p'-q'z)\left[\Li{\frac{p'-q'z}{qz-p}}-\Li{-\frac{q'}{q}}\right] \notag\\
&\quad +\det\begin{pmatrix}p''&p\\q''&q\end{pmatrix}(p''-q''z)\left[\Li{\frac{p''-q''z}{qz-p}}-\Li{-\frac{q''}{q}}\right]\notag\\
&\quad+\frac{1}{q}\log\left(\frac{q^2}{(q+q')(q+q'')}\right) \Bigg\}\,.\label{eq: complex Wilton}
\end{align}
\item The real part of $\mathcal{B}_k$ is bounded on the upper half plane and its 
non–tangential limit on $\R$ is continuous at all irrational points and has a decreasing jump of $\frac{\pi}{q^k}$ at each rational number $\frac{p}{q}$. 
\item As one approaches the boundary the imaginary part  of $\mathcal{W}$ behaves as follows:
\begin{enumerate}
\item if $x\in\R$ is a  Wilton number, then $\im\mathcal{W}(x+w)$ converges to $W(x)$ as $w\rightarrow 0$ 
in any domain with a finite order of tangency to the real axis;
\item if $x\in\R$ is Diophantine, then in both cases one can allow domains with infinite order of tangency to the real line.
\end{enumerate}
\end{enumerate}
\end{thm}
As the Wilton function is not BMO, it also becomes clear by Fefferman's theorem that its harmonic conjugate can not be an $L^{\infty}$ function.

\subsection{Structure of the paper}

The paper is organised as follows. In Section \ref{sec:BMO} we state and prove the $\textnormal{BMO}$-properties of the $k$-Brjuno and the Wilton function and in Section \ref{sec:kBrjuno real} we give statements about the truncated real $k$-Brjuno and the truncated real Wilton function which corresponds to the finite part of the sum at rational numbers.

The subsequent sections deal with a complexification of the system. In Section \ref{sec:compl CF} we introduce a complex continued fraction algorithm. 
In Section \ref{sec:complexTkS} we extend the operators $T_k:=T_{k,1}$ and $S:= S_{1}$ to the complex plane. The main findings concerning these operators are given in Section \ref{sec:sumboundary}. 
Particularly, Proposition \ref{prop: main prop kBrjuno} indicates how $T_k$ behaves. The analogous behaviour of $S$ can then immediately be deduced by using the fact that $S=-T_1$.

Finally, with the help of these operators we define the complex $k$-Brjuno and the complex Wilton function in Section \ref{sec: compl functions}.

Those proofs which are very similar to the ones in \cite{MMY3} are given in an appendix.

\section{$\textnormal{BMO}$ properties of the real $k$-Brjuno and the real Wilton function}\label{sec:BMO}
In this section, we study the bounded mean oscillation ($\textnormal{BMO}$) properties of the real $k$-Brjuno and the Wilton function - both with respect to different transformations $A_{\alpha}$ with $\alpha\in [1/2, 1]$. 
Before stating the main results of this section, we will first recall the definition of a $\textnormal{BMO}$ function.

Let $L^1_{\text{loc}}(\R)$ be the space of the locally integrable functions on $\R$.
Recall that the mean value of a function $f\in L^1_{\text{loc}}(\R)$ on an interval $I$ is defined as
\begin{equation*}
f_I=\frac{1}{|I|}\int_{I}f(x) dx.
\end{equation*}
For an interval $U$, we say that a function $f\in \textnormal{BMO}(U)$ if
\begin{equation}\label{eq:BMO seminorm}
\Vert f \Vert _{*,U}:=\sup_{I \subset U}\frac{1}{|I|}\int_{I}|f(x)-f_I|dx<\infty.
\end{equation}
For further properties of the BMO space, see for example \cite[Appendix]{MMY1} and the monographies 
\cite{garnett2007bounded, garcia2011weighted}.

In the following, we will state the main properties of this section which show that the $\textnormal{BMO}$ properties fundamentally differ between the $k$-Brjuno functions and the Wilton function. We first give the statement for the $k$-Brjuno functions.
\begin{prop}\label{prop: kBrjuno is BMO}
For all $k\in\mathbb{N}$ and all $\alpha\in [1/2, 1]$, the $k$-Brjuno function $\B{k}{\alpha}$ is a $\textnormal{BMO}$ function.
\end{prop}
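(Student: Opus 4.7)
The natural strategy is to exploit the functional equation (1.8), namely $(1-\T{k}{\alpha})\B{k}{\alpha}(x) = -\log x$ on $(0,\alpha)$, and to invert $1-\T{k}{\alpha}$ as a Neumann series on an appropriate BMO space. Since $-\log x$ is the prototypical BMO function on any bounded interval, the proposition will follow once one shows that $\T{k}{\alpha}$, acting on $\Z$--periodic even functions on $\R$ described by their restrictions to $(0,\alpha)$, is bounded on BMO with spectral radius strictly less than one. Iterating the operator gives exactly $\T{k}{\alpha}^n(-\log x)(x) = (\be{\alpha}{n-1}(x))^k \log(1/A_\alpha^n(x))$, i.e.\ the $n$-th term of $\B{k}{\alpha}$, so convergence of $\sum_n \T{k}{\alpha}^n(-\log x)$ in BMO identifies $\B{k}{\alpha}$ as a BMO function.

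The plan proceeds in three steps. First, verify that $\log x \in \textnormal{BMO}$ on a fundamental cell; this is classical. Second, show that $\T{k}{\alpha}$ is bounded on BMO of $\Z$-periodic even functions. For this, decompose the interval $(0,\alpha)$ into the cylinders $I_{a,\epsilon} = A_\alpha^{-1}(0,\alpha)$ on which $x\mapsto 1/x$ is a Möbius map sending $I_{a,\epsilon}$ onto $(0,\alpha)$ with uniformly controlled distortion; on each $I_{a,\epsilon}$ the precomposition by $1/x$ preserves BMO with bounded constants, and the multiplication by $x^k$ (smooth, bounded by $\alpha^k$) introduces only a bounded perturbation. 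One then patches these local BMO estimates into a global one using the extension conventions and a standard summation over cylinders (as done for the analogous operator $T$ in \cite{MMY1}). Third, iterate: on the cylinder $[a_1,\ldots,a_n]$ the $n$-fold composition produces the multiplicative factor $(\be{\alpha}{n-1})^k$, which shrinks geometrically because $\be{\alpha}{n-1}$ decays at least like a Fibonacci-type bound uniformly in the cylinder. This yields $\|\T{k}{\alpha}^n\|_{\textnormal{BMO}\to\textnormal{BMO}}\to 0$, hence spectral radius $<1$ and convergence of the Neumann series.

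The main obstacle is the second step: BMO is a delicate space in which neither pointwise multiplication by a bounded function nor composition with a bilipschitz map is automatically continuous without care, and the periodic-even extension across cylinder boundaries must be handled. Here the key technical work is to localise: for an arbitrary interval $I \subset \R$, either $I$ is contained (up to translation by $\Z$) in a single cylinder, in which case the Möbius covariance and bounded distortion reduce the BMO oscillation of $\T{k}{\alpha} f$ over $I$ to that of $f$ over $A_\alpha(I)$ with a multiplicative factor controlled by $\alpha^k$, or $I$ crosses several cylinders, in which case one exploits the sub-additivity structure of BMO over a covering together with the geometric estimate $|I_{a,\epsilon}| \lesssim a^{-2}$ to sum the contributions absolutely. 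This is essentially the mechanism developed in \cite{MMY1} for $k=1$, $\alpha=1$; the factor $x^k$ with $k\ge 1$ and $x\in(0,\alpha)$ only makes the contraction stronger when $k \geq 2$, and the replacement of the Gauss map by $A_\alpha$ for $\alpha\in[1/2,1]$ affects only the partition structure, not the qualitative estimates.

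Finally, having established that the Neumann series converges in BMO, the sum defines a BMO function satisfying (1.8); since the Brjuno series (by the relation between $\T{k}{\alpha}^n$ and $\be{\alpha}{n-1}$) gives exactly the same pointwise sum wherever it converges, this sum must equal $\B{k}{\alpha}$, concluding the proof.
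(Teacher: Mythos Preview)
Your overall strategy---invert $1-\T{k}{\alpha}$ on a BMO space via a Neumann series---is correct in spirit, but your route differs from the paper's and contains a misattribution that hides a real technical point.

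The paper does \emph{not} prove the operator bound and spectral radius estimate for $\T{k}{\alpha}$ directly for all $\alpha\in[1/2,1]$. Instead, it invokes \cite[Thm.~3.3]{MMY1} only for $\alpha=1/2$: on the space $X_\ast$ of $\Z$--periodic functions even on $[0,1/2]$, the operator $\T{k}{1/2}$ has spectral radius at most $(\sqrt{2}-1)^k<1$, so $\B{k}{1/2}\in\textnormal{BMO}$. The passage to general $\alpha$ is then handled by a comparison argument (Propositions~\ref{prop: kBrjuno equiv to prop2.3} and~\ref{prop: kBrjuno equiv to prop2.4}) showing that $\B{k}{\alpha}-\B{k}{1/2}\in L^\infty$, which preserves BMO. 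Your claim that the mechanism in \cite{MMY1} is ``for $k=1$, $\alpha=1$'' is incorrect: the BMO operator estimate there is specifically for $\alpha=1/2$, where the even extension and the fundamental domain $(0,1/2)$ fit together cleanly.

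The reason this matters is exactly the obstacle you flag but do not resolve: for general $\alpha$ the parity condition is $f(-x)=f(x)$ only on $(0,1-\alpha)$, and the patching of local BMO estimates across cylinder boundaries under the periodic--even extension is genuinely more awkward. The paper's comparison route sidesteps this entirely, at the cost of two arithmetic lemmas bounding $\bigl|\B{k}{\alpha}(x)-\sum_j (q_j^{(1)})^{-k}\log q_{j+1}^{(1)}\bigr|$ uniformly. Your direct approach might be pushed through, but it would require substantially more work on the extension conventions than you indicate, and it is not what \cite{MMY1} provides off the shelf.
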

Contrarily, for the Wilton function we have the following statement:
\begin{thm}\label{prop: Wilton not BMO}
The Wilton function $W=W_1$ is not a $\textnormal{BMO}$ function.
\end{thm}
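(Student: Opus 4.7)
The plan is to derive a contradiction by computing the averages of $W$ on the two half-intervals $I^+_\varepsilon := (0,\varepsilon)$ and $I^-_\varepsilon := (-\varepsilon, 0)$ and showing that their difference grows like $2\log(1/\varepsilon)$ as $\varepsilon \downarrow 0$. For any $f \in \mathrm{BMO}(J)$ and any subinterval $I \subset J$ with $|J| \le 2|I|$, a direct estimate gives $|f_I - f_J| \le 2 \|f\|_{*,J}$, so applying this with $J = (-\varepsilon,\varepsilon)$ and its two halves forces $|W_{I^+_\varepsilon} - W_{I^-_\varepsilon}| \le 4 \|W\|_{*,J}$. The geometric picture is that near the rational $0$ the Wilton function behaves like the sign-changing logarithm $\mathrm{sign}(x)\log(1/|x|)$, the prototypical function just outside BMO.

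Both averages will be extracted from the functional equation $W(x) = -\log x - xW(G(x))$, valid on $(0,1)$, and both computations rely on $W \in L^1(0,1)$. The latter follows from the Neumann series $W = \sum_{n \ge 0}(-T)^n(-\log x)$ converging in every $L^p$, thanks to the $L^p$-spectral radius of $T$ being strictly less than one \cite{MMY1}. For the right half, fix $\varepsilon = 1/N$ and decompose $(0,\varepsilon) = \bigsqcup_{n \ge N}(1/(n+1), 1/n)$; the substitution $x = 1/(n+y)$ on each piece converts the correction term into
$$
\int_0^\varepsilon x W(G(x))\, dx = \sum_{n \ge N}\int_0^1 \frac{W(y)}{(n+y)^3}\, dy = O(\|W\|_1 / N^2),
$$
yielding $W_{I^+_\varepsilon} = \log(1/\varepsilon) + 1 + O(\varepsilon)$. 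For the left half, $\Z$-periodicity gives $W_{I^-_\varepsilon} = W_{(1-\varepsilon,\,1)}$, and on $(1-\varepsilon,1)$ one has $G(1-y) = y/(1-y)$, so $W(1-y) = -\log(1-y) - (1-y)W(y/(1-y))$. The change of variable $z = y/(1-y)$ converts $\int_0^\varepsilon (1-y)W(y/(1-y))\, dy$ into $\int_0^{\varepsilon/(1-\varepsilon)} W(z)(1+z)^{-3}\, dz$; expanding $(1+z)^{-3} = 1 + O(z)$ together with the $L^1$ bound $\int_0^\varepsilon |W| = O(\varepsilon \log(1/\varepsilon))$ produced along with the first computation reduces this, after minor bookkeeping, to $\int_0^\varepsilon W(z)\, dz$ plus an error of $O(\varepsilon^2 \log(1/\varepsilon))$. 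A second use of the first computation then yields $W_{I^-_\varepsilon} = -\log(1/\varepsilon) - 1 + O(\varepsilon \log(1/\varepsilon))$.

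Subtracting gives $W_{I^+_\varepsilon} - W_{I^-_\varepsilon} = 2\log(1/\varepsilon) + O(1)$, diverging as $\varepsilon \downarrow 0$, which completes the contradiction. The main technical obstacle lies in the computation of $W_{I^-_\varepsilon}$: the integrand $W(y/(1-y))$ is pointwise very singular, and only the \emph{averaged} $L^1$ estimate obtained in the first half of the argument provides the handle needed to replace it by $W(z)$ at the cost of a manageable error, so the two parts of the argument must dovetail precisely. Conceptually, the antisymmetry of $W$ across the rational $0$ -- the same feature that renders the defining series conditionally rather than absolutely convergent -- forces a $\log$-sized jump in local averages that BMO cannot accommodate.
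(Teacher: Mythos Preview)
Your argument is correct and follows the same overall strategy as the paper: both show that the averages of $W$ over $(0,\varepsilon)$ and $(-\varepsilon,0)$ differ by $2\log(1/\varepsilon)+O(1)$, which is incompatible with BMO via the elementary inequality relating half-interval averages to the oscillation on the whole interval. The only substantive difference is in how the two integral asymptotics $\int_0^\varepsilon W = \varepsilon(\log(1/\varepsilon)+1)+O(\varepsilon^2)$ and $\int_{1-\varepsilon}^1 W = -\varepsilon(\log(1/\varepsilon)+1)+O(\varepsilon^2\log(1/\varepsilon))$ are obtained: the paper simply quotes them from \cite[Lemme~2]{BM2}, whereas you derive them directly from the functional equation $W(x)=-\log x - xW(G(x))$ together with $W\in L^1$. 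Your derivation is clean---the key observation that the correction term $\int_0^\varepsilon xW(G(x))\,dx$ is $O(\varepsilon^2)$ after the change of variables $x=1/(n+y)$ is exactly the mechanism behind the cited lemma---and has the advantage of being self-contained. Note incidentally that for $\varepsilon=1/N$ your auxiliary endpoint $\varepsilon/(1-\varepsilon)$ equals $1/(N-1)$, so the first computation applies to it verbatim without needing the general-$\varepsilon$ version.
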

On the other hand, we define for the following $g:=\frac{\sqrt{5}-1}{2}$ and have:
\begin{thm}\label{prop: Wilton BMO}
 For all $\alpha\in\left[\frac{1}{2}, g\right]$, the function $W_\alpha$ is a $\textnormal{BMO}$ function.  
\end{thm}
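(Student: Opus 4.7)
My plan is to follow the Neumann-series approach underlying Proposition~\ref{prop: kBrjuno is BMO}. From the functional equation $[(1-\Ss{\alpha})W_\alpha](x) = -\log x$, equivalently $[(1+\T{1}{\alpha})W_\alpha](x) = -\log x$, together with the iterative identity $(\T{1}{\alpha})^n(-\log x) = \be{\alpha}{n-1}(x)\log(1/A_\alpha^n(x))$, the defining series of $W_\alpha$ coincides with the alternating Neumann expansion
$$W_\alpha(x) = \sum_{n=0}^\infty (-\T{1}{\alpha})^n(-\log x) = \sum_{n=0}^\infty (-1)^n\be{\alpha}{n-1}(x)\log\bigl(1/A_\alpha^n(x)\bigr).$$
The goal reduces to proving that this series converges in $\textnormal{BMO}(0,\alpha)$.

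The arithmetic ingredient that makes $\alpha\le g$ the natural threshold is that in this range every $\alpha$-continued-fraction partial quotient satisfies $a_j\ge 2$. Indeed, for $y=A_\alpha^{j-1}(x)\in(0,\alpha]$ one has $1/y\ge 1/\alpha\ge 1/g=1+g$, so
$$a_j = \lfloor 1/y-\alpha+1\rfloor\ge \lfloor 2+g-\alpha\rfloor\ge 2.$$
Combined with the general range bound $A_\alpha^i(x)\le\alpha$, this produces the uniform geometric decay $\be{\alpha}{n}(x)\le\alpha^{n+1}\le g^{n+1}$, as well as a Fibonacci-type lower bound on the convergent denominators $q_n(x)$. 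Both features are lost when $\alpha=1$, where the digit $1$ can appear repeatedly and $\be{\alpha}{n}$ need not decay geometrically, so no analogous absolute estimate can hold.

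With this at hand, I would estimate $\|\be{\alpha}{n-1}\log(1/A_\alpha^n)\|_{*,(0,\alpha)}$ by a geometric term. Concretely, I would partition $(0,\alpha)$ into the level-$n$ cylinders determined by the first $n$ partial quotients; on each cylinder $A_\alpha^n$ is a linear-fractional map and $\log(1/A_\alpha^n)$ has a single logarithmic endpoint singularity with $n$-uniform BMO norm, while the prefactor $\be{\alpha}{n-1}$ contributes at most $\alpha^n$. Patching the cylinder estimates via the local character of BMO and then summing in $n$ gives absolute convergence of the alternating Neumann series. A conceptually cleaner alternative is to combine with Proposition~\ref{prop: kBrjuno is BMO} through the identity $(1-\T{1}{\alpha})\B{1}{\alpha}=-\log x=(1+\T{1}{\alpha})W_\alpha$, so $W_\alpha = (1+\T{1}{\alpha})^{-1}(1-\T{1}{\alpha})\B{1}{\alpha}$, and then invoke a spectral-radius bound $\le g<1$ for $\T{1}{\alpha}$ on the appropriate BMO subspace to invert $1+\T{1}{\alpha}$.

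The main obstacle is the BMO estimate itself, since BMO does not behave simply under products or composition: the cylinder-by-cylinder analysis must juggle the piecewise regularity of $\be{\alpha}{n-1}$ with the explicit inverse branches of $A_\alpha^n$ and then recombine via BMO covering/John--Nirenberg-type patching, essentially the technical core already developed in \cite{MMY1}. What is genuinely new here is that the alternating signs of the Wilton series rule out any use of absolute cancellation between logarithmic singularities (which is available for the positive-signs Brjuno sum), so the uniform geometric decay forced by the digit condition $a_j\ge 2$ is indispensable. This is also what distinguishes the present theorem from Theorem~\ref{prop: Wilton not BMO}: at $\alpha=1$ the digit $1$ appears, the decay is lost, and the alternating series escapes BMO.
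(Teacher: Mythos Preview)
Your approach differs from the paper's and carries a real gap. Both routes you sketch---(i) estimating $\|(\T{1}{\alpha})^n(-\log x)\|_{*}$ cylinder-by-cylinder and patching, and (ii) invoking a spectral-radius bound $<1$ for $\T{1}{\alpha}$ on a BMO subspace---ultimately rest on controlling $\T{1}{\alpha}$ as an operator on BMO for the given $\alpha$. But the result you appeal to from \cite{MMY1} (Proposition~\ref{prop: thm3.3 MMY1} here) is established \emph{only} for $\alpha=1/2$: the space $X_\ast$, the parity constraint, and the spectral-radius estimate all exploit the specific branch structure of the nearest-integer map $A_{1/2}$, and the argument is not stated or proved for other $\alpha$. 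Your cylinder-patching outline hides precisely the difficulty that makes \cite{MMY1} nontrivial---BMO is not local, and the oscillation across cylinder boundaries is the whole point---so ``essentially the technical core already developed in \cite{MMY1}'' is not a citation you can cash in for $\alpha\in(1/2,g]$. The pointwise decay $\be{\alpha}{n-1}\le\alpha^n$ you derive from $a_j\ge 2$ is correct but does not by itself control BMO norms of products with $\log(1/A_\alpha^n)$.

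The paper sidesteps this entirely. It uses the operator machinery only at $\alpha=1/2$, where Proposition~\ref{prop: thm3.3 MMY1} makes $1-\Ss{1/2}=1+\T{1}{1/2}$ invertible on $X_\ast$ and hence $W_{1/2}\in\textnormal{BMO}$. The extension to $\alpha\in(1/2,g]$ is then a purely arithmetic $L^\infty$ comparison (Proposition~\ref{prop: unif bdd W_1/2 - W_alpha}): one tracks how the denominators $q_j^{(1/2)}$ and $q_j^{(\alpha)}$ differ along the two expansions and shows $|q_j^{(1/2)}-q_j^{(\alpha)}|\in\{0,q_{j-1}^{(1/2)}\}$, which together with Remark~\ref{rmk: Wilton cond} yields $W_{1/2}-W_\alpha\in L^\infty$. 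Your observation that $\alpha\le g$ forces $a_j^{(\alpha)}\ge 2$ is exactly the arithmetic fact that drives this comparison (and whose failure for $\alpha>g$ is discussed in Remark~\ref{rmk: Wilton alpha large}), but it is used to relate two continued-fraction algorithms, not to bound an operator norm.
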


Before we start with the proofs of the statements above, we first want to give some remarks about them: 
Proposition \ref{prop: kBrjuno is BMO} is an extension of \cite[Thm.~3.2]{MMY1} from the classical Brjuno function to $k$-Brjuno functions. 

As a comparison to Figure \ref{fig:BW} showing $B_{1,1}$ and $W_1$, in Figure \ref{fig: graphs}, some numerical simulations of $W_{\alpha}$ with different values of $\alpha$ are shown: the numerical evidence 
supports the conjecture that also for $\alpha\in (g,1)$ the function $W_{\alpha}$ is a $\textnormal{BMO}$ function. 
Indeed, the graph for $\alpha =e-2$ and $\alpha =0.9$ (and nearby values, not displayed in Figure \ref{fig:BW}) 
suggest that the function $W_\alpha$ exhibits a mix of {\it symmetric} logarithmic singularities at rational points and of jumps, a singular behaviour
which is compatible with belonging to BMO.
However, unfortunately, the results from Proposition \ref{prop: Wilton BMO} can not immediately be transferred to $\alpha\in (g,1)$, see Remark \ref{rmk: Wilton alpha large} for an explanation which difficulties occur.

\begin{figure}[h]
$\begin{matrix}
\includegraphics[width=7cm]{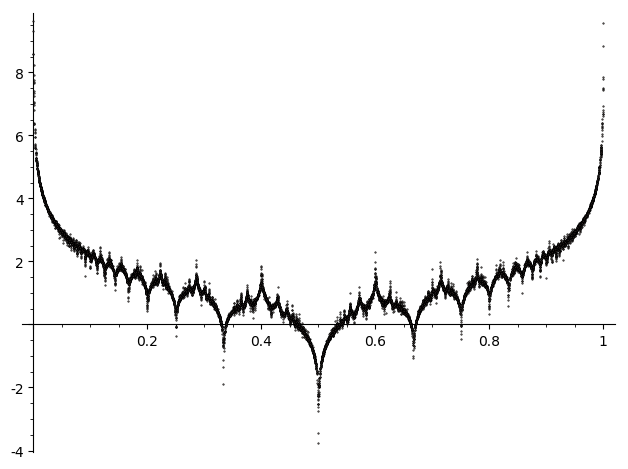}
&
\raisebox{1.7ex}{\includegraphics[width=7cm]{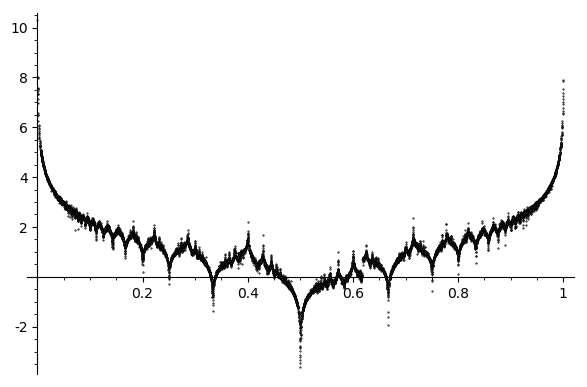}}
\\
\includegraphics[width=7cm]{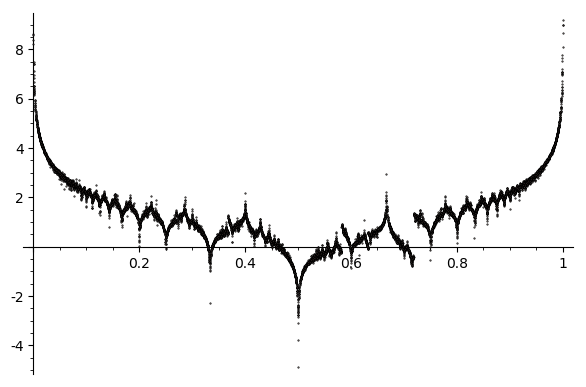}
&
\includegraphics[width=7cm]{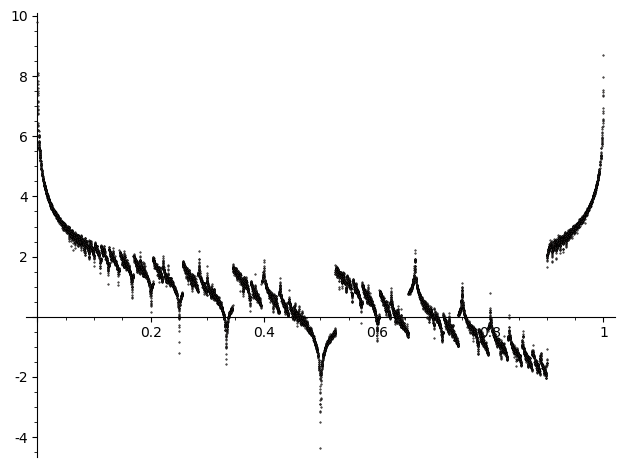}
\end{matrix}$
\caption{Numerical computations of $W_\alpha$ for $\alpha=1/2$ (upper left), $\alpha=\frac{\sqrt{5}-1}{2}$ (upper right), $\alpha=e-2$ (lower left) and $\alpha=0.9$ (lower right).}\label{fig: graphs}
\end{figure}

\subsection{Proof that the real $k$-Brjuno functions are $\textnormal{BMO}$ functions}\label{subsec: real kBrjuno BMO}

The main idea of the proof is to prove the statement for $\B{k}{1/2}$, see Proposition \ref{prop: thm3.3 MMY1}, and to show then that $\B{k}{\alpha}$ differs from $\B{k}{1}$ only by an $L^{\infty}$ function which follows from Propositions \ref{prop: kBrjuno equiv to prop2.3} and \ref{prop: kBrjuno equiv to prop2.4}. 
As the proof follows from very similar arguments as those in \cite[Thm.~3.2]{MMY1}, we will only describe shortly the necessary changes in the proofs. 

We start by introducing 
\begin{equation}\label{eq: def Xast}
 X_\ast = \{f\in\text{BMO}(\R): f(x+1)=f(x) \text{ for all }x\in\R, ~ f(-x)=f(x) \text{ for all }x\in[0,1/2]\}.
\end{equation}
endowed with a norm which is the sum of the BMO seminorm $\|\!\cdot\!\|_{\ast,[0,1/2]}$ as in \eqref{eq:BMO seminorm} and of the $L^2$ norm on the interval $(0,1/2)$ (w.r.t. the $A_{1/2}$-invariant
probability measure). Then one has:
\begin{prop}[{\cite[Thm.~3.3]{MMY1}}]\label{prop: thm3.3 MMY1}
For all $k \in\mathbb{N}$, the operator $\T{k}{1/2}$ as in \eqref{eq:k-Brjuno operator} is a bounded linear operator
from $X_\ast$ to $X_\ast$ whose spectral radius is bounded above by $(\sqrt{2}-1)^k$.
\end{prop}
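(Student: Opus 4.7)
The plan is to adapt the argument of \cite[Thm.~3.2]{MMY1} for the classical Brjuno operator ($k=1$) by tracking the $k$-th power of the contractive weight throughout. The key observation is that replacing the factor $x$ in the operator by $x^k$ strengthens the contraction per iteration from rate $(\sqrt{2}-1)$ to $(\sqrt{2}-1)^k$, which yields the claimed sharpened spectral radius bound.

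First I would verify that $\T{k}{1/2}$ maps $X_\ast$ into itself. The periodicity and evenness conditions are built into the extension rule, so the nontrivial content is BMO preservation. Restricted to a cylinder of $A_{1/2}$, the map $x\mapsto 1/x$ is a Möbius diffeomorphism onto $(0,1/2)$ with bounded distortion, so pullback preserves BMO there; multiplication by the smooth bounded weight $x^k$ then also preserves BMO, and the even periodic extension introduces no new oscillation. Iterating the defining formula and using parity and periodicity at each step to rewrite $f(1/x)$ as $f(A_{1/2}(x))$ yields the explicit identity
\begin{equation*}
(\T{k}{1/2}^n f)(x) = \bigl(\be{1/2}{n-1}(x)\bigr)^{k}\, f\bigl(A_{1/2}^n(x)\bigr), \qquad x\in(0,1/2).
\end{equation*}

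The core of the proof is to estimate this quantity in $X_\ast$. For the nearest-integer continued fraction, a uniform bound of order $(\sqrt{2}-1)^n$ on $\be{1/2}{n-1}(x)$ is available, stemming from the fact that the slowest inverse branch of $A_{1/2}$ is $y\mapsto 1/(2+y)$, whose fixed point is precisely $\sqrt{2}-1$; raising to the $k$-th power gives the decay $(\sqrt{2}-1)^{nk}$, which immediately controls the $L^2$ part of the $X_\ast$-norm. The main obstacle is the BMO seminorm: given an arbitrary interval $I\subset(0,1/2)$, one must decompose $I$ according to the level-$n$ cylinder structure of $A_{1/2}$, estimate $\|\cdot\|_{\ast,I}$ on each cylinder (where $\be{1/2}{n-1}$ is essentially constant by bounded distortion and $f\circ A_{1/2}^n$ inherits BMO regularity from $f$), handle intervals straddling cylinder boundaries by splitting, and sum the contributions with the contractive weight $(\sqrt{2}-1)^{nk}$. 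This is the technical heart of the argument in \cite[Thm.~3.2]{MMY1} and extends to the $k$-th power case without essential modification; the spectral radius bound then follows from $r(\T{k}{1/2}) = \lim_{n\to\infty}\|\T{k}{1/2}^n\|^{1/n}_{X_\ast\to X_\ast}$.
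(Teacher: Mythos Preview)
The paper does not give its own proof of this proposition: it is quoted verbatim from \cite[Thm.~3.3]{MMY1} and used as a black box (see the label \texttt{prop: thm3.3 MMY1} and the absence of any proof environment after the statement). So there is nothing in the present paper to compare your attempt against.

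Your sketch is a faithful summary of how the result is obtained in \cite{MMY1}, with the obvious modification that the weight $x$ becomes $x^k$ and the per-step contraction $(\sqrt{2}-1)$ becomes $(\sqrt{2}-1)^k$. The iterated identity $(\T{k}{1/2}^n f)(x)=(\be{1/2}{n-1}(x))^k f(A_{1/2}^n x)$ and the cylinder-by-cylinder BMO estimate are exactly the right skeleton. Two small remarks: the paper cites Theorem~3.3 of \cite{MMY1} rather than Theorem~3.2 (the latter is the corollary that $B_{1,1/2}\in\text{BMO}$, the former is the operator bound itself); and your outline, while correct in spirit, defers all of the genuine analytic work---the uniform BMO control across cylinder boundaries and the summation over depths---to the reference, so it is a proof plan rather than a self-contained proof. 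Given that the paper itself treats this as an imported result, that is entirely appropriate here.
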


 To proceed, we prove an analog of \cite[Prop.~2.3, eq.~(iv)]{MMY1}.
\begin{prop}\label{prop: kBrjuno equiv to prop2.3}
 For all $k\in\N$, there exists a constant $C_{1,k} > 0$ such that for all $\alpha\in[1/2,1]$ and $x \in \mathbb{R} \backslash \mathbb{Q}$, one has
 \begin{align*}
  \left| \B{k}{\alpha}(x)-\sum_{j=0}^{\infty}\frac{\log q_{j+1}^{(\alpha)}(x)}{\big(q_j^{(\alpha)}(x)\big)^k} \right|<C_{1,k}.
 \end{align*}
\end{prop}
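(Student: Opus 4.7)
The plan is a term-by-term comparison, using uniform estimates for the $\alpha$-continued fraction convergents to show that replacing $\bigl(\be{\alpha}{n-1}(x)\bigr)^k$ by $1/\bigl(q_n^{(\alpha)}(x)\bigr)^k$ and $\log(1/A_\alpha^n(x))$ by $\log q_{n+1}^{(\alpha)}(x)-\log q_n^{(\alpha)}(x)$ produces discrepancies whose total is bounded by a constant depending only on $k$.

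\textbf{Step 1: $\alpha$-continued fraction estimates.} First I would invoke (or derive from the recurrence $q_{n+1}^{(\alpha)}=a_{n+1}^{(\alpha)}q_n^{(\alpha)}+\epsilon_n q_{n-1}^{(\alpha)}$) the standard identity
$$\be{\alpha}{n-1}(x)=\frac{1}{q_n^{(\alpha)}(x)+\epsilon_n A_\alpha^n(x)\,q_{n-1}^{(\alpha)}(x)},$$
together with the well-known uniform geometric growth $q_n^{(\alpha)}(x)\ge c\,\rho^{n}$ for some $\rho>1$, $c>0$ independent of $\alpha\in[1/2,1]$. This identity implies
$$\left|\be{\alpha}{n-1}(x)-\frac{1}{q_n^{(\alpha)}(x)}\right|\le \frac{A_\alpha^n(x)}{\bigl(q_n^{(\alpha)}(x)\bigr)^{2}},$$
and in particular $\be{\alpha}{n-1}(x)\asymp 1/q_n^{(\alpha)}(x)$ uniformly in $\alpha$. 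Combining $a_{n+1}^{(\alpha)}\le 1/A_\alpha^n(x)<a_{n+1}^{(\alpha)}+1$ (with $a_{n+1}^{(\alpha)}\ge 1$ since $\alpha\le 1$) with the recurrence yields
$$\log\bigl(1/A_\alpha^n(x)\bigr)=\log q_{n+1}^{(\alpha)}(x)-\log q_n^{(\alpha)}(x)+O(1),$$
with an absolute $O(1)$.

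\textbf{Step 2: The two substitutions.} Applying $|a^k-b^k|\le k\max(a,b)^{k-1}|a-b|$ to Step 1 gives
$$\left|\bigl(\be{\alpha}{n-1}(x)\bigr)^k-\frac{1}{\bigl(q_n^{(\alpha)}(x)\bigr)^k}\right|\le C_k\,\frac{A_\alpha^n(x)}{\bigl(q_n^{(\alpha)}(x)\bigr)^{k+1}}.$$
Multiplying by $\log(1/A_\alpha^n(x))$ and using $t\log(1/t)\le 1/e$ on $(0,1)$, the total error from this substitution is at most $C'_k\sum_n\bigl(q_n^{(\alpha)}(x)\bigr)^{-(k+1)}$, uniformly bounded by the geometric growth. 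By Step 1 the remaining difference
$$\sum_{n=0}^{\infty}\frac{\log(1/A_\alpha^n(x))}{\bigl(q_n^{(\alpha)}(x)\bigr)^k}\;-\;\sum_{n=0}^{\infty}\frac{\log q_{n+1}^{(\alpha)}(x)}{\bigl(q_n^{(\alpha)}(x)\bigr)^k}$$
equals $-\sum_n\log q_n^{(\alpha)}(x)/\bigl(q_n^{(\alpha)}(x)\bigr)^k+O\bigl(\sum_n\bigl(q_n^{(\alpha)}(x)\bigr)^{-k}\bigr)$, and both series are uniformly bounded because $q_n^{(\alpha)}\ge c\rho^n$ forces $\log q_n^{(\alpha)}/\bigl(q_n^{(\alpha)}\bigr)^k$ to decay geometrically. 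Assembling the two substitutions gives the claim with a constant $C_{1,k}$ depending only on $k$.

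\textbf{Main obstacle.} The substantive point is the \emph{uniformity in $\alpha\in[1/2,1]$} of all the auxiliary constants: the geometric growth rate of $q_n^{(\alpha)}$, the fact that $q_n^{(\alpha)}\be{\alpha}{n-1}$ stays in a common compact subinterval of $(0,\infty)$, and the absolute $O(1)$ relating $\log(1/A_\alpha^n)$ to $\log(q_{n+1}^{(\alpha)}/q_n^{(\alpha)})$. These are standard facts about $\alpha$-continued fractions with $\alpha\in[1/2,1]$ (cf.\ \cite{N,TanakaIto}), but they must be invoked carefully; once in hand the remainder of the argument is the direct extension of the $k=1$ case treated in \cite[Prop.~2.3]{MMY1}.
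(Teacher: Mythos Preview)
Your argument is correct and follows essentially the same route as the paper's own proof: both rest on the identity $\beta_{j-1}^{(\alpha)}q_j^{(\alpha)}+\epsilon_j\beta_j^{(\alpha)}q_{j-1}^{(\alpha)}=1$ (equivalently your formula for $\beta_{n-1}^{(\alpha)}$) together with the uniform bounds $\sum_j 1/q_j^{(\alpha)}<\infty$ and $\sum_j(\log q_j^{(\alpha)})/q_j^{(\alpha)}<\infty$ from \cite[Remark~1.7]{MMY1}, and the paper's binomial expansion of $(1/q_j^{(\alpha)})^k-(\beta_{j-1}^{(\alpha)})^k$ plays exactly the same role as your mean-value bound $|a^k-b^k|\le k\max(a,b)^{k-1}|a-b|$. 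One minor slip: the displayed inequality $|\beta_{n-1}^{(\alpha)}-1/q_n^{(\alpha)}|\le A_\alpha^n(x)/(q_n^{(\alpha)})^2$ is too strong as stated (the exact value is $A_\alpha^n(x)\,\beta_{n-1}^{(\alpha)}q_{n-1}^{(\alpha)}/q_n^{(\alpha)}\le C\,A_\alpha^n(x)\,q_{n-1}^{(\alpha)}/(q_n^{(\alpha)})^2$), but this still yields $|(\beta_{n-1}^{(\alpha)})^k-1/(q_n^{(\alpha)})^k|\le C_k\,A_\alpha^n(x)/(q_n^{(\alpha)})^k$, and your Step~2 then goes through unchanged.
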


Before we start with the proof, we recall the following property.
\begin{rmk}\label{rmk:rmk1.7}
In \cite[Remark 1.7]{MMY1} it is showed that, for all $\alpha\in [1/2,1]$,
$$\sum_{j=0}^\infty\frac{\log q_j^{(\alpha)}}{q_j^{(\alpha)}}\le 2/e=: c_1\text{ and }\sum_{j=0}^\infty \frac{\log 2}{q_j^{(\alpha)}}\le 5\log2 =: c_2.$$
\end{rmk}

\begin{proof}[Proof of Proposition~\ref{prop: kBrjuno equiv to prop2.3}]
For the following calculations we drop the dependence on $\alpha$ and $x$.
 We obtain by analogous calculations as in \cite[Prop.~2.3, eq.~(iv)]{MMY1} that
 \begin{align*}
  &\bigg|-\B{k}{\alpha}(x) +\sum_{j=0}^{\infty}\frac{\log q_{j+1}}{q_j^k} \bigg|\\
  &=\bigg| \sum_{j=0}^{\infty}  \beta_{j-1}^k\log \frac{\beta_j}{\beta_{j-1}} 
+ \sum_{j=0}^{\infty}\left(\beta_{j-1}  + \epsilon_{j} \frac{q_{j-1}}{q_j} \beta_j\right)^k\log q_{j+1} \bigg|\\
  &=\bigg|\sum_{j=0}^{\infty}  \beta_{j-1}^k\log (\beta_jq_{j+1}) 
-\sum_{j=0}^{\infty}\beta_{j-1}^k  \log \beta_{j-1}+ 
\sum_{j=0}^{\infty} \left(\left( \beta_{j-1}+\epsilon_{j} \frac{q_{j-1}}{q_j}\beta_j\right)^k -\beta_{j-1}^k\right)\log q_{j+1}\bigg|\\
&\le \sum_{j=0}^{\infty}  \left|\beta_{j-1}^k\log (\beta_jq_{j+1})\right| 
+\sum_{j=0}^{\infty}\left|\beta_{j-1}^k  \log \beta_{j-1}\right|\\
&\qquad +\sum_{j=0}^{\infty}\left|\left(\left( \beta_{j-1}+\epsilon_{j} \frac{q_{j-1}}{q_j}\,\beta_j\right)^k -\beta_{j-1}^k\right)\log q_{j+1}\right|.
\end{align*}
We notice that $|\beta_i|\le 1$ and thus 
\begin{align*}
 \sum_{j=0}^{\infty}  \left|\beta_{j-1}^k\log (\beta_jq_{j+1})\right|\le 
 \sum_{j=0}^{\infty}  \left|\beta_{j-1}\log (\beta_jq_{j+1})\right|\le 2c_2,
\end{align*}
where the last estimate follows as in the proof of \cite[Prop.~2.3, eq.~(iv)]{MMY1} and $c_2$ is given in Remark~\eqref{rmk:rmk1.7}.
Furthermore, 
\begin{align*}
 \sum_{j=0}^{\infty}\left|\beta_{j-1}^k  \log \beta_{j-1}\right|
 &\le \sum_{j=0}^{\infty}\left|\beta_{j-1}  \log \beta_{j-1}\right|\le 2(c_1+c_2)
\end{align*}
which also follows as in the proof of \cite[Prop.~2.3, eq.~(iv)]{MMY1} and $c_1$ is given in Remark~\eqref{rmk:rmk1.7}.
Finally, we have 
\begin{align*}
 \MoveEqLeft\sum_{j=0}^{\infty}\left|\left(\left( \beta_{j-1}+\epsilon_{j} \frac{q_{j-1}}{q_j} \beta_j \right)^k -\beta_{j-1}^k\right)\log q_{j+1}\right|\\
 &\le \sum_{j=0}^{\infty}\left|\left(\sum_{\ell=1}^k {k\choose \ell}\beta_{j-1}^{k-\ell}\cdot \left(\epsilon_{j} \frac{q_{j-1}}{q_j}\, \beta_j\right)^{\ell}\right)\log q_{j+1}\right|\\
 &\le \sum_{j=0}^{\infty}\left|(2^{k}-1) \frac{q_{j-1}}{q_j}\, \beta_j\log q_{j+1}\right|
 \le 2^{k+1} c_1,
\end{align*}
where the last estimate again follows analogously to \cite[Prop.~2.3, eq.~(iv)]{MMY1} and the proof can be completed in the same way as in \cite[Prop.~2.3, eq.~(iv)]{MMY1}.
\end{proof}

\begin{rmk}\label{rmk: Wilton cond}
 With analogous methods as above using the absolute values of the sum we also obtain the following statement:
 There exists a constant $C > 0$ such that for all $\alpha\in[1/2,1]$ and $x \in \mathbb{R} \backslash \mathbb{Q}$, one has
 \begin{align*}
  \left| W_{\alpha}(x)-\sum_{j=0}^{\infty}(-1)^j\frac{\log  q_{j+1}^{(\alpha)}(x)}{q_j^{(\alpha)}(x)} \right|<C.
 \end{align*}
\end{rmk}

The next proposition is an equivalent to \cite[Prop.~2.4]{MMY1}.
\begin{prop}\label{prop: kBrjuno equiv to prop2.4}
 For all $k\in\N$, there exists a constant $C_{2,k} > 0$ such that for all $\alpha\in [1/2, 1]$ and for all $x \in \mathbb{R} \backslash \mathbb{Q}$ one has
 \begin{align*}
  \left|\B{k}{\alpha}(x) - \sum_{j=0}^{\infty}\frac{\log q_{j+1}^{(1)}(x)}{(q_j^{(1)}(x))^k} \right|\le  C_{2,k}.
 \end{align*}
\end{prop}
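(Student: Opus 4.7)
The natural first step is to combine Proposition \ref{prop: kBrjuno equiv to prop2.3} applied once for the given $\alpha$ and once for $\alpha=1$ (where $A_1=\Gauss$). This reduces the claim, up to an additive constant $2C_{1,k}$, to the uniform comparison of convergent-denominator series
\begin{equation*}
\left| \sum_{j=0}^\infty \frac{\log q_{j+1}^{(\alpha)}(x)}{\bigl(q_j^{(\alpha)}(x)\bigr)^k} - \sum_{m=0}^\infty \frac{\log q_{m+1}^{(1)}(x)}{\bigl(q_m^{(1)}(x)\bigr)^k} \right| \le C_k',
\end{equation*}
valid uniformly in $x\in \R\setminus\Q$ and $\alpha\in[1/2,1]$. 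So the remaining task is a pure comparison of the two sequences of convergents.

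To carry out this comparison, I would invoke the classical singularization/insertion correspondence between the $\alpha$-convergents and the regular continued fraction convergents, as developed in \cite{N} and \cite{TanakaIto} and used in the $k=1$ proof of \cite[Prop.~2.4]{MMY1}. For $\alpha\in[1/2,1]$ one has, after a finite initial segment, a strictly increasing index map $j\mapsto m(j)$ such that $p_j^{(\alpha)}/q_j^{(\alpha)}=p_{m(j)}^{(1)}/q_{m(j)}^{(1)}$ and the gap $m(j+1)-m(j)$ is bounded by a universal constant (equal to $2$ for $\alpha\in[g,1]$, where the phenomenon is pure singularization of partial quotients equal to $1$; the case $\alpha\in[1/2,g]$ requires additionally tracking insertion steps, still with bounded gap). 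Substituting $q_j^{(\alpha)}=q_{m(j)}^{(1)}$ and $q_{j+1}^{(\alpha)}=q_{m(j+1)}^{(1)}$ into the $\alpha$-sum transforms it into $\sum_j \log q_{m(j+1)}^{(1)}/\bigl(q_{m(j)}^{(1)}\bigr)^k$, so the difference from the full regular-CF sum splits into two error contributions: (i) the at most one ``missing'' regular index $m\in (m(j),m(j+1))$ per $j$, and (ii) the logarithmic discrepancy $\log\bigl(q_{m(j+1)}^{(1)}/q_{m(j)+1}^{(1)}\bigr)=\log a_{m(j)+2}^{(1)}+O(1)$. Both are absorbed into sums of the form $\sum_m \log q_{m+1}^{(1)}/q_m^{(1)}$, since $(q_m^{(1)})^{-k}\le(q_m^{(1)})^{-1}$ for $k\ge 1$, and such sums are uniformly $O(1)$ by Remark \ref{rmk:rmk1.7}.

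The main obstacle I anticipate is the uniform bookkeeping of the singularization/insertion dictionary across all $\alpha\in[1/2,1]$, and in particular across the threshold $\alpha=g$ where the sign $\epsilon_j$ starts taking the value $-1$ and insertions enter the picture. Once the dictionary with the uniform gap bound is in place, the $k$-th power in the denominator is harmless -- in fact it only sharpens the estimates compared to the $k=1$ regime -- so the remaining argument follows the same pattern as in \cite[Prop.~2.4]{MMY1}, yielding the uniform constant $C_{2,k}$.
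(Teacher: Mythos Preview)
Your proposal is correct and matches the paper's own argument, which simply invokes Proposition~\ref{prop: kBrjuno equiv to prop2.3} and then follows \cite[Prop.~2.4]{MMY1} verbatim with $(q_j^{(1)})^k$ replacing $q_j^{(1)}$ in the denominators. One minor point: for the whole range $\alpha\in[1/2,1]$ the $\alpha$-convergents already form a subsequence of the regular convergents with gap $m(j+1)-m(j)\in\{1,2\}$, so no insertion step is needed and the bookkeeping you flag as the main obstacle is uniform across $[1/2,1]$ and already handled in \cite{MMY1}.
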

\begin{proof}
 The proof follows completely analogously to that of \cite[Prop,~2.4]{MMY1} with the only difference that we use Proposition \ref{prop: kBrjuno equiv to prop2.3} instead of \cite[Prop.~2.3]{MMY1} and each $q_j^{(1)}$ (which is denoted by $Q_j$ in \cite{MMY1}) in the denominator is replaced by $(q_j^{(1)})^k$.
\end{proof}

\begin{proof}[Proof of Proposition \ref{prop: kBrjuno is BMO}]
By Proposition~\ref{prop: thm3.3 MMY1}, $1-\T{k}{\alpha}$ is invertible on $X_\ast$.
A $\Z$--periodic even function equal to $-\log x$ on $(0,1/2]$ is in $X_\ast$.
Thus, $\B{k}{1/2}$ is $\textnormal{BMO}$.
Since by Proposition \ref{prop: kBrjuno equiv to prop2.4}
for all $\alpha\in [1/2,1]$
the $k$-Brjuno function $\B{k}{\alpha}$ differs from $\B{k}{1/2}$ only by an $L^{\infty}$ function, $\B{k}{\alpha}$ is a $\textnormal{BMO}$ function as well.
\end{proof}

\subsection{Proofs of the $\textnormal{BMO}$ properties of the Wilton function}

\begin{proof}[Proof of Theorem \ref{prop: Wilton not BMO}]
For brevity, in the following we write  $O_I(f)=\frac{1}{|I|}\int_{I}|f(x)-f_I|dx$. Furthermore, as we only consider $\alpha=1$, we also always write $W$ instead of $W_1$.
By \cite[Prop.~A.7]{MMY4}, if $I_1$ and $I_2$ are two consecutive intervals, then
\begin{equation}\label{oscil}
O_{I_1\cup I_2}(f)=\frac{|I_1|}{|I_1|+|I_2|}O_{I_1}(f)+ \frac{|I_2|}{|I_1|+|I_2|}O_{I_2}(f)+\frac{2|I_1||I_2|}{(|I_1|+|I_2|)^2}|f_{I_1}-f_{I_2}|.
\end{equation}
Let $I_{n}:=\left[-\frac{1}{n},\frac{1}{n}\right]=\left[-\frac{1}{n},0\right]\cup\left[0,\frac{1}{n}\right] =:I_n^-\cup I_n^+$. 
By \eqref{oscil}, $O_{I_n}(f)\geq \frac{1}{2}|f_{I_n^-}-f_{I_n^+}|$ for any $f\in L^1_{\text{loc}}(\R)$.
By \cite[Lem.~2]{BM2}, we have
\begin{equation*}
\int_{0}^{x}W(t)\mathrm{d}t=x\log(1/x)+x+O(x^2),
\end{equation*} and 
\begin{equation*}
\int_{1-x}^1W(t)\mathrm{d}t=-x\log(1/x)-x+O(x^2\log(2/x)),
\end{equation*}
thus we clearly have
\begin{equation*}
 W_{I_n^+}=\log(n)+1+O(1/n).
\end{equation*}
Since $W$ is $\Z$--periodic, we also have
\begin{equation*}
W_{I_n^-}=-\log(n)-1+O(n^{-1}\log(2n)).
\end{equation*}
Thus, 
\begin{equation*}
O_{I_n}(W)\geq \log(n)+1+O(n^{-1}\log(2n)),
\end{equation*}
which completes the proof of the proposition.
\end{proof}

For the proof of Theorem \ref{prop: Wilton BMO}, we can not use exactly the same strategy as for the proof of Proposition \ref{prop: kBrjuno is BMO}. The reason is that, as we have seen in Theorem \ref{prop: Wilton not BMO}, $W_1$ is not a $\textnormal{BMO}$ function. Hence, comparing $W_{\alpha}$ for $\alpha<1$ with $W_1$ can not work. Instead, the underlying idea of the proof is to use that $W_{1/2}$, the Wilton function with respect to the nearest integer continued fraction, is a $\textnormal{BMO}$ function and compare $W_{1/2}$ with $W_{\alpha}$ for $\alpha\in [1/2,g]$, see Proposition \ref{prop: unif bdd W_1/2 - W_alpha}.

Let $X_\ast$ be defined as in \eqref{eq: def Xast}. Since $\Ss{1/2}=-\T{1}{1/2}$ clearly by Proposition \ref{prop: thm3.3 MMY1}
we have $W_{1/2}\in X_\ast$. On the other hand one has

\begin{prop}\label{prop: unif bdd W_1/2 - W_alpha}
For $\alpha\in[\frac12,g]$, we have $W_{1/2}-W_\alpha\in L^\infty$, where
$g=\frac{\sqrt{5}-1}{2}$.
\end{prop}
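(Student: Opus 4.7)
The plan is to reduce the $L^\infty$ estimate to a controlled comparison of two alternating series of convergents, and then to exploit the special structure of $\alpha$-continued fractions for $\alpha\in[1/2,g]$. By Remark~\ref{rmk: Wilton cond} (specialised to $k=1$) applied separately with $\alpha'=1/2$ and with a general $\alpha\in[1/2,g]$, one has, uniformly in $x$,
\begin{equation*}
W_{1/2}(x) = \sum_{j=0}^{\infty} (-1)^j \frac{\log q^{(1/2)}_{j+1}(x)}{q^{(1/2)}_j(x)} + O(1), \qquad W_\alpha(x) = \sum_{j=0}^{\infty} (-1)^j \frac{\log q^{(\alpha)}_{j+1}(x)}{q^{(\alpha)}_j(x)} + O(1),
\end{equation*}
so that it suffices to bound the difference of the two right-hand sides uniformly in $x$.

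To compare the two sequences of convergents I first compare the underlying maps. Setting $y:=\{1/x\}$, a direct case analysis of \eqref{eq: def Aalpha} shows that $A_\alpha(x)=A_{1/2}(x)$ except in the \emph{discrepancy region} $y\in[1-\alpha,1/2)$, where $A_\alpha(x)=1-y$ whereas $A_{1/2}(x)=y$. The hypothesis $\alpha\leq g$ enters at the next step: the inequality $\alpha/(1+\alpha)\leq 1-\alpha$ is equivalent to $\alpha^2+\alpha-1\leq 0$, which holds exactly for $\alpha\leq g$. A short computation then gives $A_\alpha(1-y)=(1-2y)/(1-y)$ for every $y$ in the discrepancy region, and a small induction verifies that the $A_\alpha$-orbit re-synchronises with the $A_{1/2}$-orbit after a uniformly bounded number of steps. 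Translating this into a statement about convergents, $(q_j^{(\alpha)})$ is obtained from $(q_j^{(1/2)})$ by re-grouping short blocks of consecutive denominators, with the parity of the block indices preserved modulo~$2$.

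Because the re-grouping preserves parity, the signs in the two alternating series align block-by-block, and each local discrepancy contributes only a bounded correction, which can be estimated through the quasi-multiplicativity of the convergents together with the summability $\sum_j \log(q_j)/q_j\leq 2/e$ from Remark~\ref{rmk:rmk1.7}. Summing these corrections yields a uniform bound on the difference of the two series, and hence $W_{1/2}-W_\alpha\in L^\infty$.

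The hardest part will be the matching-and-parity argument in the second paragraph: unlike the $k$-Brjuno case treated in Proposition~\ref{prop: kBrjuno is BMO}, where a term-by-term triangle inequality works, here one cannot simply take absolute values in the Wilton series without destroying the cancellation that makes it converge. The constraint $\alpha\leq g$ is precisely what prevents the reflected iterate $1-y$ from landing back in the discrepancy region and thereby causing an uncontrolled cascade of sign mismatches---exactly the obstruction flagged in Remark~\ref{rmk: Wilton alpha large}.
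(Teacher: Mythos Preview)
Your reduction via Remark~\ref{rmk: Wilton cond} is the right opening move and agrees with the paper. But the core of your argument contains two genuine errors.

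First, the claim that ``the $A_\alpha$-orbit re-synchronises with the $A_{1/2}$-orbit after a uniformly bounded number of steps'' is false. In the paper's analysis, once a discrepancy occurs at step $n$ (so $x_n^{(1/2)}\in[1-\alpha,\tfrac12)$ and $x_n^{(\alpha)}=1-x_n^{(1/2)}$), one locates $x_n^{(1/2)}$ in an interval $(t_i,t_{i-1}]$ and then tracks both orbits for $i$ further steps before any synchronisation can occur; here $t_i\searrow 1-g$, so $i$ is unbounded as $x_n^{(1/2)}$ approaches $1-g$. There is no uniform bound. Relatedly, your description of $(q_j^{(\alpha)})$ as a ``re-grouping'' of $(q_j^{(1/2)})$ is not what happens: for $\alpha\le g$ the two orbits are indexed in lockstep (one step of $A_\alpha$ per step of $A_{1/2}$), and the paper's key estimate is the index-aligned bound $q_j^{(1/2)}-q_j^{(\alpha)}\in\{0,q_{j-1}^{(1/2)}\}$ for every $j$.

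Second, your diagnosis that ``one cannot simply take absolute values'' is the opposite of what the paper does. From the index-aligned bound above one gets $|\log q_{j+1}^{(1/2)}-\log q_{j+1}^{(\alpha)}|\le\log 2$ and $|1/q_j^{(1/2)}-1/q_j^{(\alpha)}|\le 4/q_{j+1}^{(1/2)}$, and then the difference of the two alternating series is bounded by a \emph{termwise} triangle inequality using only $\sum_j(\log 2)/q_j^{(1/2)}$ and $\sum_j(\log q_j^{(1/2)})/q_j^{(1/2)}$ from Remark~\ref{rmk:rmk1.7}. No parity or sign-tracking is needed; the $(-1)^j$ simply disappear under absolute values. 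The point is not that the Wilton series itself converges absolutely, but that the \emph{difference} of corresponding terms is absolutely summable. What $\alpha\le g$ buys is precisely the one-to-one index alignment that makes this work---as Remark~\ref{rmk: Wilton alpha large} explains, for $\alpha>g$ one step of $A_{1/2}$ may correspond to several steps of $A_\alpha$, and then the indices (hence the comparison) desynchronise.
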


\begin{proof}
Let $x\in[0,\frac12]$.
Since $\alpha\in[\frac 12,g]$, we have $2-\frac1\alpha\le 1-g\le \frac1{2+\alpha}\le 1-\alpha$.
Recall that
$$
A_{1/2}(x) = \begin{cases}
\frac 1x - k & \text{ if }x\in(\frac{1}{k+1/2},\frac 1k], \\
(k+1)-\frac1x & \text{ if } x\in(\frac{1}{k+1},\frac{1}{k+1/2}],
\end{cases}
\quad\text{ for }k\ge 2.
$$
Since $\alpha\le \frac1{1+\alpha}$, we have
$$
A_{\alpha}(x) = \begin{cases}
\frac 1x - k & \text{ if }x\in(\frac{1}{k+\alpha},\frac 1k], \\
(k+1)-\frac1x & \text{ if } x\in(\frac{1}{k+1},\frac{1}{k+\alpha}], \\
2-\frac1x & \text{ if } x\in(\frac 12,\alpha],
\end{cases}
\quad\text{ for }k\ge 2.
$$
See Figure~\ref{fig:graphofA} for the graphs of $A_{1/2}$ and $A_\alpha$ for a typical $\alpha\in(1/2,g)$.
\begin{figure}[h] 
$\begin{matrix}
\begin{tikzpicture}[scale=8]
\draw[black] (0,0) -- (3/5,0);
\draw[black] (0,0) -- (0,3/5);
\draw[domain=2/5:1/2,black] plot (\x, 1/\x-2);
\draw[domain=1/3:2/5,black] plot (\x, 3-1/\x);
\draw[domain=2/7:1/3,black] plot (\x, 1/\x-3);
\draw[domain=1/4:2/7,black] plot (\x, 4-1/\x);
\draw[domain=2/9:1/4,black] plot (\x, 1/\x-4);
\draw[domain=1/5:2/9,black] plot (\x, 5-1/\x);
\draw[domain=2/11:1/5,black] plot (\x, 1/\x-5);
\draw[domain=1/6:2/11,black] plot (\x, 6-1/\x);
\draw[domain=2/13:1/6,black] plot (\x, 1/\x-6);
\draw[domain=1/7:2/13,black] plot (\x, 7-1/\x);
\draw[domain=2/15:1/7,black] plot (\x, 1/\x-7);
\draw[domain=1/8:2/15,black] plot (\x, 8-1/\x);

\draw[black] (3/5,-0.01) -- (3/5,0.01);
\draw[black] (1/2,-0.01) -- (1/2,0.01);
\draw[black] (2/5,-0.01) -- (2/5,0.01);
\draw[black] (1/3,-0.01) -- (1/3,0.01);
\draw[black] (2/7,-0.01) -- (2/7,0.01);
\draw[black] (1/4,-0.01) -- (1/4,0.01);
\draw[black] (2/9,-0.01) -- (2/9,0.01);

\draw[black, dotted] (3/5,0) -- (3/5,3/5);
\draw[black, dotted] (1/2,0) -- (1/2,3/5);
\draw[black, dotted] (2/5,0) -- (2/5,3/5);
\draw[black, dotted] (1/3,0) -- (1/3,3/5);
\draw[black, dotted] (2/7,0) -- (2/7,3/5);
\draw[black, dotted] (1/4,0) -- (1/4,3/5);
\draw[black, dotted] (2/9,0) -- (2/9,3/5);
\draw[black, dotted] (1/5,0) -- (1/5,3/5);
\draw[black,dotted] (0,1/2)--(3/5,1/2);
\draw[black,dotted] (0,3/5)--(3/5,3/5);
\node[black] at (3/5,-0.04) {$\alpha$};
\node[black] at (1/2,-0.04) {$\frac12$};
\node[black] at (2/5+0.015,-0.04) {$\frac25$};
\node[black] at (1/3,-0.04) {$\frac13$};
\node[black] at (2/7+0.01,-0.04) {$\frac27$};
\node[black] at (1/4,-0.04) {$\frac14$};
\node[black] at (2/9,-0.04) {$\frac29$};
\draw[black] (-0.01,1/2)--(0.01,1/2);
\node[black] at (-0.04,1/2) {$\frac 12$};

\end{tikzpicture}
&
\begin{tikzpicture}[scale=8]
\draw[black] (0,0) -- (3/5,0);
\draw[black] (0,0) -- (0,3/5);
\draw[domain=1/2:3/5, black] plot (\x, 2-1/\x);
\draw[domain=1/(2+3/5):1/2,  black] plot (\x, 1/\x-2);
\draw[domain=1/3:1/(2+3/5), samples=200,  black] plot (\x, 3-1/\x);
\draw[domain=1/(3+3/5):1/3, black] plot (\x, 1/\x-3);
\draw[domain=1/4:1/(3+3/5), samples=200,  black] plot (\x, 4-1/\x);
\draw[domain=1/(4+3/5):1/4, black] plot (\x, 1/\x-4);
\draw[domain=1/5:1/(4+3/5), samples=200,  black] plot (\x, 5-1/\x);
\draw[domain=1/(5+3/5):1/5, black] plot (\x, 1/\x-5);
\draw[domain=1/6:1/(5+3/5), samples=200,  black] plot (\x, 6-1/\x);
\draw[domain=1/(6+3/5):1/6, black] plot (\x, 1/\x-6);
\draw[domain=1/7:1/(6+3/5), samples=200,  black] plot (\x, 7-1/\x);
\draw[domain=1/(7+3/5):1/7, black] plot (\x, 1/\x-7);
\draw[domain=1/8:1/(7+3/5), samples=200,  black] plot (\x, 8-1/\x);
\draw[domain=1/(8+3/5):1/8, black] plot (\x, 1/\x-8);
\draw[domain=1/9:1/(8+3/5), samples=200,  black] plot (\x, 9-1/\x);

\draw[black] (3/5,-0.01) -- (3/5,0.01);
\draw[black] (1/2,-0.01) -- (1/2,0.01);
\draw[black] (5/13,-0.01) -- (5/13,0.01);
\draw[black] (1/3,-0.01) -- (1/3,0.01);
\draw[black] (5/18,-0.01) -- (5/18,0.01);

\draw[black, dotted] (3/5,0) -- (3/5,3/5);
\draw[black, dotted] (1/2,0) -- (1/2,3/5);
\draw[black, dotted] (5/13,0) -- (5/13,3/5);
\draw[black, dotted] (5/18,0) -- (5/18,3/5);
\draw[black, dotted] (5/23,0) -- (5/23,3/5);

\draw[black,dotted] (0,1-3/5)--(3/5,1-3/5);
\draw[black,dotted] (0,1/2)--(3/5,1/2);
\draw[black,dotted] (0,3/5)--(3/5,3/5);

\node[black] at (3/5,-0.04) {$\alpha$};
\node[black] at (1/2,-0.04) {$\frac12$};
\node[black] at (5/13,-0.04) {$\frac{1}{2+\alpha}$};
\node[black] at (1/3,-0.04) {$\frac13$};
\node[black] at (5/18,-0.04) {$\frac{1}{3+\alpha}$};

\draw[black] (-0.01,1-3/5)--(0.01,1-3/5);
\draw[black] (-0.01,1/2)--(0.01,1/2);
\draw[black] (-0.01,3/5)--(0.01,3/5);

\node[black] at (-0.08,1-3/5) {$1-\alpha$};
\node[black] at (-0.04,1/2) {$\frac 12$};
\node[black] at (-0.04,3/5) {$\alpha$};

\end{tikzpicture}
\end{matrix}$
\caption{The graph of $A_{1/2}$ and $A_\alpha$ for $\alpha\in[\frac12,g]$, where $g=\frac{\sqrt{5}-1}{2}$.}
\label{fig:graphofA}
\end{figure}
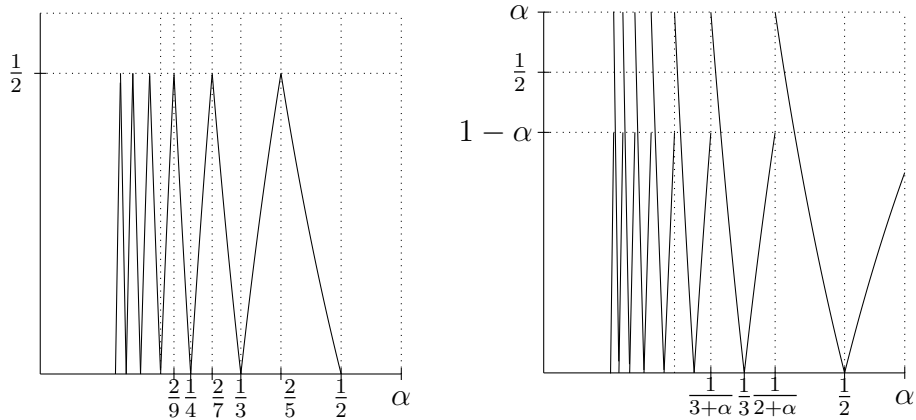

Let us denote by $(a_n^{(\alpha)},\epsilon_n^{(\alpha)})$ the $n$th partial quotient of $x$ given as a $\alpha$-continued fraction, $x_n^{(\alpha)} = A_{\alpha}^n(x)$, where $x_0^{(\alpha)} = x$ and 
$p_n^{(\alpha)}/q_n^{(\alpha)}$ its $n$th principal convergent.

We have $\epsilon_1^{(1/2)}\not=\epsilon_1^{(\alpha)}$ if and only if $x\in (\frac{1}{k+\alpha},\frac{1}{k+1/2})$ for some $k\ge 2$.
In this case, 
$$(a_1^{(1/2)},\epsilon_1^{(1/2)})=(k+1,-1),~ (a_1^{(\alpha)},\epsilon_1^{(\alpha)})=(k,1)~ \text{ and } 
~x_1^{(1/2)}=1-x_1^{(\alpha)}.$$

Let 
$$t_i = \cfrac{1}{3-\cfrac{1}{3-\cfrac{1}{\ddots -\cfrac{1}{3-\cfrac 12}}}}\quad\text{ and }\quad\frac{r_i}{s_i} = \cfrac{1}{3-\cfrac{1}{3-\cfrac{1}{\ddots -\cfrac{1}{3}}}},$$
where $3$ in the continued fraction expansion appears $i$ times.
Note that $r_i=s_{i-1}$ and $2-\frac{1}{1- t_i}=\frac{r_i}{s_i}$.
More precisely,
\begin{align*}
& \{t_i\}_{i\ge 0} = \left\{\frac12, \frac 25, \frac 5{13}, \frac{13}{34}, \frac{34}{89},\cdots\right\}\subset \left(1-g,\frac 25\right)\cup\left\{\frac12\right\},\\
& \left\{\frac{r_i}{s_i}\right\}_{i\ge0} = \left\{0,\frac{1}{3},\frac{3}{8},\frac{8}{21},\frac{21}{55},\cdots\right\}\subset\{0\}\cup\left[\frac 13,1-g\right).\end{align*}
Then, $A_{1/2}(t_i)=t_{i-1}$, $A_\alpha(\frac{r_i}{s_i})=A_{1/2}(\frac{r_i}{s_i})=\frac{r_{i-1}}{s_{i-1}}$  for $i\ge1$ and $t_i \searrow 1-g$ and $\frac {r_i}{s_i} \nearrow 1-g$.

Now, we suppose that $n$ is the minimal index such that
\begin{equation*}
x_1^{(1/2)}=x_1^{(\alpha)},~ x_2^{(1/2)}=x_2^{(\alpha)},~ \cdots, x_{n-1}^{(1/2)}=x_{n-1}^{(\alpha)},~\text{but} ~ x_n^{(1/2)}\not=x_n^{(\alpha)}.
\end{equation*}
Then,
\begin{align*}
& (a_{i}^{(1/2)},\epsilon_i^{(1/2)})=(a_i^{(\alpha)},\epsilon_i^{(\alpha)}) \text{ and } q_i^{(1/2)} = q_i^{(\alpha)} ~\text{for}~1\le i\le n-1, \\
& x_n^{(1/2)} = 1- x_n^{(\alpha)}, ~a_n^{(1/2)} = a_n^{(\alpha)} + 1 \ge 3,~ \epsilon_n^{(1/2)} = -1, ~ \epsilon_n^{(\alpha)} = 1 \text{ and } q_n^{(1/2)} = q_n^{(\alpha)} + q_{n-1}^{(1/2)}.
\end{align*}
Since $x_n^{(1/2)}\not= x_n^{(\alpha)}$, $x_n^{(1/2)}=1-x_n^{(\alpha)}$ and the domain of $A_{1/2}$ is $[0,\frac12]$, 
we have 
\begin{equation}\label{eq:x_n}
x_n^{(1/2)} \in \left[1-\alpha,\frac12\right)~\text{ and } ~ x_n^{(\alpha)} \in \left(\frac 12, \alpha\right].
\end{equation}
Since $[1-\alpha, \frac 12)\subset \bigsqcup_{i=1}^\infty (t_{i},t_{i-1}]$, there is $i\ge 1$ such that $x_n^{(1/2)}\in (t_{i},t_{i-1}]$.
This implies
\begin{align*}
& x_{n+j}^{(1/2)} = 3- \frac{1}{x_{n+j-1}^{(1/2)}} \in (t_{i-j},t_{i-j-1}]  \text{ and } (a_{n+j}^{(1/2)},\epsilon_{n+j}^{(1/2)}) = (3,-1) \text{ for } 1\le j \le i-1, \\
& x_{n+i}^{(1/2)} = \frac{1}{x_{n+i-1}^{(1/2)}} - 2  \text{ and } (a_{n+i}^{(1/2)},\epsilon_{n+i}^{(1/2)}) = (2,1).
\end{align*}
On the other hand,
$x_n^{(\alpha)}\in[1-t_{i-1},1-t_i)$, which implies
\begin{align*}
& x_{n+1}^{(\alpha)} = 2-\frac{1}{x_n^{(\alpha)}}\in \left(\frac{r_{i-1}}{s_{i-1}},\frac{r_{i}}{s_i}\right] \text{ and } (a_{n+1}^{(\alpha)},\epsilon_{n+1}^{(\alpha)}) = (2,-1), \\
& x_{n+j}^{(\alpha)} = 3-\frac{1}{x_{n+j-1}^{(\alpha)}}\in\left(\frac{r_{i-j}}{s_{i-j}},\frac{r_{i-j+1}}{s_{i-j+1}}\right] \text{ and } (a_{n+j}^{(\alpha)},\epsilon_{n+j}^{(\alpha)})=(3,-1) \text{ for } 2\le j \le i.
\end{align*}
Then,
$$q_{n+j}^{(1/2)}-q_{n+j}^{(\alpha)}
=q_{n+j-1}^{(1/2)},\text{ for }1\le j \le i-1,\quad\text{and }
 q_{n+i}^{(1/2)}=q_{n+i}^{(\alpha)}.
$$

In the following for 
$
g=\left(\begin{smallmatrix} a & b \\ c& d \end{smallmatrix} \right)\in \GL_2(\Z),
$
we denote by $g.z=\frac{az+b}{cz+d}$ the M\"obius transform applied on $z$.
With this notation we have
$$x_{n+i}^{(1/2)} =\begin{pmatrix}-2&1\\1&0\end{pmatrix}\begin{pmatrix}3&-1\\1&0\end{pmatrix}^{i-1}.x_{n}^{(1/2)}~\text{and}~ x_{n+i}^{(\alpha)} =\begin{pmatrix}3&-1\\1&0\end{pmatrix}^{i-1}\begin{pmatrix}2&-1\\1&0\end{pmatrix}.x_{n}^{(\alpha)}.$$
Since $\begin{pmatrix}3&-1\\1&0\end{pmatrix}\begin{pmatrix}-1&1\\-1&2\end{pmatrix}\begin{pmatrix}3&-1\\1&0\end{pmatrix}^{-1} = \begin{pmatrix}-1&1\\-1&2\end{pmatrix}$,
\begin{align*}
x_{n+i}^{(1/2)} 
& =\begin{pmatrix}-2&1\\1&0\end{pmatrix}\begin{pmatrix}3&-1\\1&0\end{pmatrix}^{i-1}
\begin{pmatrix}-1&1\\0&1\end{pmatrix}\left(\begin{pmatrix}3&-1\\1&0\end{pmatrix}^{i-1}\begin{pmatrix}2&-1\\1&0\end{pmatrix}\right)^{-1}.x_{n+i}^{(\alpha)}\\
& = \begin{pmatrix}-2&1\\1&0\end{pmatrix}\begin{pmatrix}-1&1\\-1&2\end{pmatrix}.x_{n+i}^{(\alpha)}
 = \begin{pmatrix}1&0\\-1&1\end{pmatrix}.x_{n+i}^{(\alpha)}. 
\end{align*}
It means that $\frac{1}{x_{n+i}^{(1/2)}} = \frac{1}{x_{n+i}^{(\alpha)}}-1$, thus, for $0\le c_1,c_2 \le 1$ and $k\in \mathbb{N}$ such that $k\ge 2$,
$$x_{n+i}^{(1/2)}\in \left(\frac{1}{k+c_1},\frac{1}{k+c_2}\right]~ \text{ if and only if }~x_{n+i}^{(\alpha)}\in \left(\frac{1}{k+1+c_1},\frac{1}{k+1+c_2}\right].$$
If $x_{n+i}^{(1/2)}\in \left(\frac{1}{k+1},\frac{1}{k+\alpha}\right]\cup\left(\frac{1}{k+1/2},\frac{1}{k+1}\right]$,
then
$$x_{n+i+1}^{(1/2)} = x_{n+i+1}^{(\alpha)}, ~ a_{n+i+1}^{(1/2)} = a_{n+i+1}^{(\alpha)} - 1,~\epsilon_{n+i+1}^{(1/2)} = \epsilon_{n+i+1}^{(\alpha)}~\text{ and } q_{n+i+1}^{(1/2)}=q_{n+i+1}^{(\alpha)}.$$
If $x_{n+i}^{(1/2)}\in \left(\frac{1}{k+\alpha},\frac{1}{k+1/2}\right]$,
then
\begin{align*}
& x_{n+i+1}^{(1/2)} = 1- x_{n+i+1}^{(\alpha)}, ~ a_{n+i+1}^{(1/2)} = a_{n+i+1}^{(\alpha)} ,~\epsilon_{n+i+1}^{(1/2)} = -1, ~\epsilon_{n+i+1}^{(\alpha)} = 1\text{ and }\\
& q_{n+i+1}^{(1/2)}-q_{n+i+1}^{(\alpha)}=q_{n+i}^{(1/2)}.
\end{align*}
Thus, by repeating the above process, we conclude that
$$q_{j}^{(1/2)}-q_{j}^{(\alpha)} = 0 \text{ or } q_{j-1}^{(1/2)}$$
and if $q_j^{(1/2)}-q_j^{(\alpha)}=q_{j-1}^{(1/2)}$, then $(a_{j+1}^{(1/2)}, \epsilon_{j+1}^{(1/2)})= (3,-1)$ or $(2,1)$.

Since $a_j^{(1/2)}\ge 2$ and $a_j^{(1/2)}=2$ implies $\epsilon_j^{(1/2)}=1$, we have
\begin{equation}\label{eq: qj1/2alpha}
 q_j^{(\alpha)} \ge q_j^{(1/2)}-q_{j-1}^{(1/2)} \ge q_{j-1}^{(1/2)}
\end{equation}
Hence,
\begin{equation}\label{eq: 1/q-1/q}
 \left|\frac{1}{q_j^{(1/2)}}-\frac{1}{q_j^{(\alpha)}}\right| =0\qquad\text{ or }\qquad\left|\frac{1}{q_j^{(1/2)}}-\frac{1}{q_j^{(\alpha)}}\right| = \frac{q_{j-1}^{(1/2)}}{q_j^{(1/2)}q_j^{(\alpha)}}\le \frac{1}{q_{j}^{(1/2)}}\le \frac{4}{q_{j+1}^{(1/2)}}.
\end{equation}
On the other hand, \eqref{eq: qj1/2alpha} also implies
$$\left|\log q_{j+1}^{(1/2)}-\log q_{j+1}^{(\alpha)}\right| = \log\left(1+\frac{q_{j+1}^{(1/2)}-q_{j+1}^{(\alpha)}}{q_{j+1}^{(\alpha)}}\right)\le\log\left(1+\frac{q_{j}^{(1/2)}}{q_{j+1}^{(\alpha)}}\right)\le \log 2.$$
Combining this with \eqref{eq: 1/q-1/q} and Remark~\ref{rmk:rmk1.7} yields
\begin{align*}
\MoveEqLeft \left|\sum_{j=0}^\infty (-1)^j\frac{\log q_{j+1}^{(1/2)}}{q_j^{(1/2)}}-\sum_{j=0}^\infty (-1)^j\frac{\log q_{j+1}^{(\alpha)}}{q_{j}^{(\alpha)}}\right| \\
& \le \sum_{j=0}^\infty \left|\frac{\log q_{j+1}^{(1/2)}}{q_j^{(1/2)}}-\frac{\log q_{j+1}^{(\alpha)}}{q_{j}^{(1/2)}} \right|+\sum_{j=0}^\infty \left | \frac{\log q_{j+1}^{(\alpha)}}{q_{j}^{(1/2)}} - \frac{\log q_{j+1}^{(\alpha)}}{q_{j}^{(\alpha)}}\right| \\
&  \le \sum_{j=0}^\infty \frac{\log 2}{q_j^{(1/2)}}+\sum_{j=0}^\infty  \frac{4\log q_{j+1}^{(1/2)}}{q_{j+1}^{(1/2)}} 
\le 4c_1+c_2.
\end{align*}

For $x\in(\alpha,1]$, the values of $W_{1/2}(x)$ and $W_\alpha(x)$ are defined symmetrically as
$$W_{1/2}(x) = W_{1/2}(1-x) \text{ and } W_\alpha(x)=W_\alpha(1-x).$$
By Remark~\ref{rmk: Wilton cond}, $W_{1/2}-W_\alpha$ is bounded on $[\alpha,1]$.

For $x\in(1/2,\alpha]$, $1-x\in[1-\alpha,1/2)$. 
Since $W_{1/2}(x) = W_{1/2}(1-x)$, we can consider that $n=0$ as in \eqref{eq:x_n}.
\end{proof}

\begin{proof}[Proof of Theorem \ref{prop: Wilton BMO}]
As we already observed  $1-S^{(1/2)}$ is invertible in $X_\ast$, which together with the fact that the 
 $\Z$--periodic even function equal to $-\log x$ on $(0,1/2]$ is in $X_\ast$ implies that $W_{1/2}$ is $\textnormal{BMO}$.
By Proposition~\ref{prop: unif bdd W_1/2 - W_alpha}, $W_\alpha$ is $\textnormal{BMO}$ for $\alpha\in[1/2,g]$.
\end{proof}

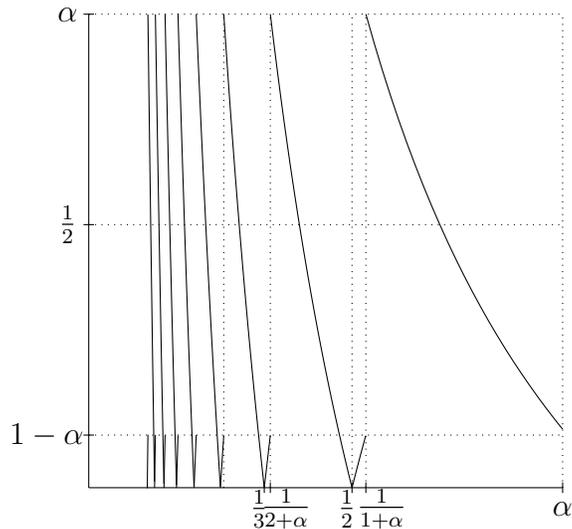
\begin{figure}[t]
\begin{tikzpicture}[scale=7]
\draw[black] (0,0) -- (9/10,0);
\draw[black] (0,0) -- (0,9/10);
\draw[domain=1/(1+9/10):9/10,  black] plot (\x, 1/\x-1);
\draw[domain=1/2:1/(1+9/10), black] plot (\x, 2-1/\x);
\draw[domain=1/(2+9/10):1/2,  black] plot (\x, 1/\x-2);
\draw[domain=1/3:1/(2+9/10), samples=200,  black] plot (\x, 3-1/\x);
\draw[domain=1/(3+9/10):1/3, black] plot (\x, 1/\x-3);
\draw[domain=1/4:1/(3+9/10), samples=200,  black] plot (\x, 4-1/\x);
\draw[domain=1/(4+9/10):1/4, black] plot (\x, 1/\x-4);
\draw[domain=1/5:1/(4+9/10), samples=200,  black] plot (\x, 5-1/\x);
\draw[domain=1/(5+9/10):1/5, black] plot (\x, 1/\x-5);
\draw[domain=1/6:1/(5+9/10), samples=200,  black] plot (\x, 6-1/\x);
\draw[domain=1/(6+9/10):1/6, black] plot (\x, 1/\x-6);
\draw[domain=1/7:1/(6+9/10), samples=200,  black] plot (\x, 7-1/\x);
\draw[domain=1/(7+9/10):1/7, black] plot (\x, 1/\x-7);
\draw[domain=1/8:1/(7+9/10), samples=200,  black] plot (\x, 8-1/\x);
\draw[domain=1/(8+9/10):1/8, black] plot (\x, 1/\x-8);
\draw[domain=1/9:1/(8+9/10), samples=200,  black] plot (\x, 9-1/\x);

\draw[black] (9/10,-0.01) -- (9/10,0.01);
\draw[black] (1/2,-0.01) -- (1/2,0.01);
\draw[black] (10/19,-0.01) -- (10/19,0.01);
\draw[black] (1/3,-0.01) -- (1/3,0.01);
\draw[black] (10/29,-0.01) -- (10/29,0.01);

\draw[black, dotted] (9/10,0) -- (9/10,9/10);
\draw[black, dotted] (1/2,0) -- (1/2,9/10);
\draw[black, dotted] (10/19,0) -- (10/19,9/10);
\draw[black, dotted] (10/29,0) -- (10/29,9/10);
\draw[black, dotted] (10/39,0) -- (10/39,9/10);

\draw[black,dotted] (0,1-9/10)--(9/10,1-9/10);
\draw[black,dotted] (0,1/2)--(9/10,1/2);
\draw[black,dotted] (0,9/10)--(9/10,9/10);

\node[black] at (9/10,-0.04) {$\alpha$};
\node[black] at (1/2-0.01,-0.04) {$\frac12$};
\node[black] at (10/19+0.03,-0.04) {$\frac{1}{1+\alpha}$};
\node[black] at (1/3-0.01,-0.04) {$\frac13$};
\node[black] at (10/29+0.03,-0.04) {$\frac{1}{2+\alpha}$};

\draw[black] (-0.01,1-9/10)--(0.01,1-9/10);
\draw[black] (-0.01,1/2)--(0.01,1/2);
\draw[black] (-0.01,9/10)--(0.01,9/10);

\node[black] at (-0.08,1-9/10) {$1-\alpha$};
\node[black] at (-0.04,1/2) {$\frac 12$};
\node[black] at (-0.04,9/10) {$\alpha$};

\end{tikzpicture}
\caption{Graph of $A_\alpha$ when $\alpha>g$.}\label{fig: alpha>g}
\end{figure}

Finally, we give a remark which difficulties occur if one wants to extend the results of Proposition \ref{prop: Wilton BMO} to $\alpha\in (g,1)$.
\begin{rmk}\label{rmk: Wilton alpha large}
 For the case $\alpha>g$, we cannot directly apply the same argument as in the proof of Proposition~\ref{prop: unif bdd W_1/2 - W_alpha}.
  If $\alpha>g$, then $A_\alpha$ has a branch which is defined by $1/x-1$ (see Figure~\ref{fig: alpha>g} for the graph of $A_\alpha$) contrary to the case of $\alpha\le g$.
 It causes a different behaviour of the orbits of the points under $A_\alpha$.
 In the proof, we showed a relation between $x_n^{(1/2)}$ and $x_n^{(\alpha)}$.
 By following the same argument for $\alpha>g$, we only obtain a relation between $x_n^{(1/2)}$ and $x_{n+N}^{(\alpha)}$, where $N$ depends on the number of consecutive points in the orbit of $x$ visiting $[\frac{1}{1+\alpha},\alpha]$.
\end{rmk}

\section{Behaviour of the truncated real Brjuno function and the truncated real Wilton function}\label{sec:kBrjuno real}
For $x\in\R$, recall that $\beta_{-1}=1$ and 
$$\beta_j(x)= \{x\}\, \Gauss(\{x\})\cdots \Gauss^j(\{x\}) = |p_j(\{x\})-q_j(\{x\}) \{x\}|\quad \text{ for }j\ge 0,$$
where $\Gauss$ is the Gauss map and $p_j(\{x\})/q_j(\{x\})$ is the $j$th principal convergent of $\{x\}$ with respect to the regular continued fraction algorithm.
Here, contrary to the previous section, we omit the $\alpha$ in $\beta^{(\alpha)}_j$ as we will always assume $\alpha$ to be one.

In this section, we are interested to compare a finite $k$-Brjuno sum or finite Wilton sum with the $k$-Brjuno or Wilton of its principal convergent. For doing so we first have to define the finite $k$-Brjuno or finite Wilton function respectively for a rational number. 

Within this section, let $p_j/q_j$ be a rational number whose continued fraction algorithm terminates after $r$ steps, i.e.\ it can be written as
$$p_j/q_j = m_0+\cfrac{1}{m_1+\cfrac{1}{m_2+\ddots+\cfrac{1}{m_r}}}$$ 
with $m_r\ge 2$ when $q_{j}>1$. (Of course this can correspond to the $r$th principal convergent of a number whose continued fraction expansion starts with $[m_0;m_1,\ldots, m_r,\ldots]$.)
With this we can define the truncated real Brjuno function by
$$B_{k,\mathrm{finite}}(p/q)=\sum_{j=0}^{r-1}\left(\beta_{j-1}(p/q-m_0)\right)^k\log \left(\frac{1}{\Gauss^j(p/q-m_0)}\right)$$
and the truncated real Wilton function by 
$$W_{\mathrm{finite}}(p/q)=\sum_{j=0}^{r-1}(-1)^j\beta_{j-1}(p/q-m_0)\log\left(\frac{1}{\Gauss^j(p/q- m_0)}\right).$$

Before stating the results of this section, we also introduce the notation $x_j=\Gauss^j(x)$ for $x\in (0,1)$. 
This enables us to state the next lemma which is an analog to \cite[Lem.~5.20]{MMY3}.
\begin{lem} \label{lem:lem5.20}
For each $k\in\N$ there exists $C_k>0$ such that for all $x\in (0,1)$ and $r\in \N$, we have
$$\left|B_{k,\mathrm{finite}}\left(\frac{p_r(x)}{q_r(x)}\right)-\sum_{j=0}^{r-1}\left(\beta_{j-1}(x)\right)^k\log\frac{1}{x_j}\right|\le C_k x_r (q_r(x))^{-1}$$
and
$$\left|W_{\mathrm{finite}}\left(\frac{p_r(x)}{q_r(x)}\right)-\sum_{j=0}^{r-1}(-1)^j\beta_{j-1}(x)\log\frac{1}{x_j}\right|\le C_1 x_r (q_r(x))^{-1}.$$
\end{lem}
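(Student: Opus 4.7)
The plan is to mimic the proof of \cite[Lemma~5.20]{MMY3} and compare the two finite sums term by term. Setting $y := p_r(x)/q_r(x)$ and $y_j := \Gauss^j(y)$, the fact that $y$ shares its first $r$ partial quotients with $x$ yields $y_j = p_{r-j}(x_j)/q_{r-j}(x_j)$ for $0\le j\le r-1$; in particular the convergents $p_{j-1}$, $q_{j-1}$ of $y$ and $x$ agree, and both finite sums have exactly $r$ matching terms indexed by the same partial quotients $a_{j+1}=\lfloor 1/x_j\rfloor=\lfloor 1/y_j\rfloor$.

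The first step would be to derive two clean elementary estimates on the perturbations. Using $\beta_{j-1}(y) = |q_{j-1}(x)\,y - p_{j-1}(x)|$, the sign identity $q_{j-1}(x)x - p_{j-1}(x) = (-1)^{j-1}\beta_{j-1}(x)$, and the bound $|y-x| = \beta_r(x)/q_r(x) \le \beta_{j-1}(x)/q_{j-1}(x)$ (since $\beta_r(x) \le \beta_{j-1}(x)$ and $q_{j-1}(x) \le q_r(x)$), the signs match and one obtains the closed-form identity
\[
|\beta_{j-1}(y)-\beta_{j-1}(x)| = q_{j-1}(x)\,\beta_r(x)/q_r(x).
\]
Similarly, $y_j$ is the $(r-j)$-th convergent of $x_j$, so $|y_j - x_j| = \beta_{r-j}(x_j)/q_{r-j}(x_j) = \beta_r(x)/(\beta_{j-1}(x)\,q_{r-j}(x_j))$, using the telescoping identity $\beta_{r-j}(x_j)\beta_{j-1}(x) = \beta_r(x)$. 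Combined with the lower bounds $x_j, y_j \ge 1/(a_{j+1}+1)$, this yields $|\log(1/y_j)-\log(1/x_j)| \le C\,|y_j-x_j|/x_j$.

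Next I would split each term-by-term difference as $A_j + B_j$, where $A_j$ captures the change in $\beta^k_{j-1}$ and $B_j$ the change in the logarithm. Applying $|a^k-b^k|\le k\max(a,b)^{k-1}|a-b|$, $\beta_{j-1}(y) \le 2\beta_{j-1}(x)$, $\beta_{j-1}(x) \le 1/q_j$, and the bound $q_{r-j}(x_j) \ge q_r/(2q_j)$ (which follows from the relation $q_r = q_j\,q_{r-j}(x_j) + q_{j-1}\,p_{r-j}(x_j)$), both $|A_j|$ and $|B_j|$ are bounded by $\beta_r(x)$ times an explicit factor depending on $q_j$, $q_r$ and logarithmic terms. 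Summing via the geometric decay $q_{m+1}/q_r \le C\cdot 2^{-(r-m)/2}$ (a consequence of $q_{m+2} \ge 2q_m$) together with the universal bounds of Remark \ref{rmk:rmk1.7}, the total telescopes to $C_k\,\beta_r(x) \le C_k\,x_r/q_r(x)$, since $\beta_r(x) = x_r\beta_{r-1}(x) \le x_r/q_r(x)$. For the Wilton sum the same estimates apply after moving the alternating signs inside the absolute value, yielding the same constant $C_1$ as in the $k=1$ Brjuno case. The main obstacle I anticipate is handling the indices $j$ close to $r$, where $q_{j-1}/q_r$ is not small and $\log(1/y_j)$ may blow up if $a_{j+1}$ is large; the delicate point will be verifying that the geometric decay in $r-j$ dominates the logarithmic growth $\log a_{j+1}$ uniformly in $r$.
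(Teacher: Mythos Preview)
Your approach is correct and is essentially the same as the paper's: both split the term-by-term difference into a ``change in $\beta_{j-1}^k$'' piece ($A_j$) and a ``change in $\log$'' piece ($B_j$), and both control the resulting sums by the telescoping product $x_{j+1}\cdots x_r=\beta_r/\beta_j$ together with the golden-ratio decay $x_{j+1}\cdots x_{r-1}\le g^{r-j-1}$.

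The one point where you diverge is precisely the concern you flag at the end. The paper sidesteps it entirely by the crude estimate $\log(1/x_j)<1/x_j$: combined with $q_{j-1}\le 1/\beta_{j-1}(x)$ and $|x-p_r/q_r|=\beta_r(x)/q_r$, the $A_j$-sum becomes
\[
\sum_{j=0}^{r-1} k\,q_{j-1}\,|x-p_r/q_r|\,\log\frac{1}{x_j}
\;\le\;\frac{k}{q_r}\sum_{j=0}^{r-1}\frac{\beta_r(x)}{\beta_{j-1}(x)\,x_j}
\;=\;\frac{k}{q_r}\sum_{j=0}^{r-1} x_{j+1}\cdots x_r,
\]
which is $\le kC'\,x_r/q_r$ with no logarithm left over. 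Your route also works, but requires one extra observation: since $q_{j+1}\ge (a_{j+1}+1)q_{j-1}$, one has
\[
\frac{q_{j-1}\log(a_{j+1}+1)}{q_r}\;\le\;\frac{\log(a_{j+1}+1)}{a_{j+1}+1}\cdot\frac{q_{j+1}}{q_r}\;\le\;\frac{1}{e}\cdot g^{\,r-j-1},
\]
so the geometric decay in $r-j$ does indeed dominate $\log a_{j+1}$ uniformly. Either argument closes the gap; the paper's is slightly cleaner because it avoids tracking $a_{j+1}$ at all.
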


\begin{proof}
To ease notation we write in the following $p_r$ and $q_r$ instead of $p_r(x)$ and $q_r(x)$ when the dependence on $x$ is clear.
If $r=1$, then 
\begin{align*}
 |B_{k,\mathrm{finite}}(p_1/q_1)-\log(1/x)|
 & =|W_{\mathrm{finite}}(p_1/q_1)-\log(1/x)|
 =| \log(q_1/p_1)-\log(1/x)| \\
 &=| \log(m_1)-\log(m_1+x_1)|
 = \log (1+ x_1/m_1) \le x_1/m_1.
\end{align*}

Suppose that $r>1$, then we have
\begin{align}
\MoveEqLeft \nonumber  \left|\sum_{j=0}^{r-1}\left[\left(\beta_{j-1}(p_r/q_r)\right)^k\log\frac{1}{\Gauss^j(p_r/q_r)} - \left(\beta_{j-1}(x)\right)^k\log\frac{1}{x_j}\right]\right| \\
&  \le \sum_{j=0}^{r-1}\left|\left(\beta_{j-1}(p_r/q_r)\right)^k\left[\log\frac{1}{\Gauss^j(p_r/q_r)}- \log \frac{1}{x_j}\right]\right| \nonumber\\
&\qquad + \sum_{j=0}^{r-1}\left|\left[\left(\beta_{j-1}(p_r/q_r)\right)^k- \left(\beta_{j-1}(x)\right)^k\right]\log\frac{1}{x_j}\right|\label{eq:finite}
\end{align}
and similarly for the Wilton case if $k=1$.

Note that 
\begin{equation}\label{eq:beta=qx-p}
\beta_{j-1}(p_r/q_r) = \left|q_{j-1}\frac{p_r}{q_r}-p_{j-1}\right|
\end{equation}
for $r\le j-1$.
Since $x_j,~\Gauss^j(p_r/q_r)\in\left[\frac{1}{m_{j+1}+1},\frac{1}{m_{j+1}}\right]$, we have 
$2^{-1} \le \frac{x_j}{\Gauss^j(p_r/q_r)}\le 2.$
By using \eqref{eq:beta=qx-p} and the fact that 
$ \log(y_2/y_1)  
\le (y_2-y_1)y_1^{-1}$ for $y_1<y_2\in\R_{>0}$, 
we have
\begin{align*}
 \left|\log\frac{1}{\Gauss^j(p_r/q_r)} - \log \frac{1}{x_j}\right| 
 &\le \max\left\{\Gauss^j(p_r/q_r),x_j\right\}\left|\frac{1}{\Gauss^j(p_r/q_r)}-\frac{1}{x_j}\right| \\
& \le 2\, \Gauss^j(p_r/q_r)\, \frac{|x-p_r/q_r|}{|q_{j}\,\frac{p_r}{q_r}-p_{j}|\,|q_{j}x-p_{j}|} 
 \le \frac{2|x-p_r/q_r|}{|q_{j-1}\,\frac{p_r}{q_r}-p_{j-1}|\,|q_{j}x-p_{j}|}.
\end{align*}

Furthermore, we note
\begin{equation}\label{eq:z_i1}
x = \frac{p_{i-1}x_i+p_{i}}{q_{i-1}x_i+q_{i}} \quad \text{ and } \quad x_i = -  \frac{q_{i}x - p_{i}}{q_{i-1}x-p_{i-1}}
\end{equation}
which imply
$\prod_{i=0}^\ell(-x_i) = q_\ell x - p_\ell$
and
\begin{equation}\label{eq:dirichlet1}
|p_\ell-q_\ell x| = \Big|p_\ell - q_\ell \frac{p_{\ell}x_{\ell+1}+p_{\ell+1}}{q_{\ell}x_{\ell+1}+q_{\ell+1}}\Big| 
= \frac{|p_\ell q_{\ell+1}- q_{\ell}p_{\ell+1}|}{|q_{\ell}x_{\ell+1}+q_{\ell+1}|}< \frac 1{q_{\ell+1}}.
\end{equation}

Hence, for the first summand of \eqref{eq:finite}, 
from \eqref{eq:beta=qx-p}, \eqref{eq:z_i1} and \eqref{eq:dirichlet1}, we have
\begin{align*}
\MoveEqLeft \sum_{j=0}^{r-1}\left|\left(q_{j-1}\frac{p_r}{q_r}-p_{j-1}\right)^k\left[\log\frac{1}{\Gauss^j(p_r/q_r)} - \log \frac{1}{x_j}\right]\right|\\ 
& \le \frac{2}{q_r}\sum_{j=0}^{r-1}\left|q_{j-1}\frac{p_r}{q_r}-p_{j-1}\right|^{k-1}\left|\frac{p_r-xq_r}{q_{j}x-p_{j}}\right| 
 \le \frac{2x_r}{q_r}\sum_{j=0}^{r-1}x_{j+1}\cdots x_{r-1} \\
&   \le \frac{2x_r}{q_r}\sum_{j=0}^{r-1} \left(\frac{\sqrt{5}-1}{2}\right)^{r-j-1} < 2C' x_r q_r^{-1}, \quad\quad \text{ (by Proposition 1.4-(iv) in \cite{MMY1})},
\end{align*}
where $C' = \sum_{j=0}^{\infty}\left(\frac{\sqrt{5}-1}{2}\right)^j=\frac{\sqrt{5}+3}{2}$.

On the other hand, by setting $X_{j,r}=q_{j-1}\,\frac{p_r}{q_r}-p_{j-1}$ and $Y_{j,r}= q_{j-1}x-p_{j-1}$, noting that $|X_{j,r}|\le 1/q_{j-1}$ and $|Y_{j,r}|\le 1/q_{j-1}$
we obtain for
the second term of \eqref{eq:finite} that
\begin{align}
\MoveEqLeft \sum_{j=0}^{r-1}\left|\left(q_{j-1}\,\frac{p_r}{q_r}-p_{j-1}\right)^k- \left(q_{j-1}x-p_{j-1}\right)^k\right|\log\frac{1}{x_j} \\
&= \nonumber \sum_{j=0}^{r-1}\left|X_{j,r}-Y_{j,r}\right|\left|X_{j,r}^{k-1}+X_{j,r}^{k-2}Y+\cdots +X_{j,r}Y_{j,r}^{k-2}+Y_{j,r}^{k-1}\right|\log\frac{1}{x_j} \\
& \le  \nonumber \sum_{j=0}^{r-1} \frac{k}{q_{j-1}^{k-2}}|p_r/q_r-x|\log\frac{1}{x_j} \\
& \le   \sum_{j=0}^{r-1} k q_{j-1}|p_r/q_r-x|\log\frac{1}{x_j}. \label{eq:finite2}
\end{align}

Since $\log(y)<y$ and using \eqref{eq:dirichlet1} and \eqref{eq:z_i1}, 
the value in \eqref{eq:finite2} is bounded above by
$$ \frac{k}{q_r} \sum_{j=0}^{r-1} \frac{|p_r-xq_r|}{|q_{j-1}x-p_{j-1}|}\,\frac{1}{x_j} =  \frac{k}{q_r} \sum_{j=0}^{r-1} x_{j+1}\cdots x_{r-1}x_{r} \le kC'x_rq_r^{-1}.$$
By letting $C_k := 2kC'$, we complete the proof.
\end{proof}

\section{Complex continued fractions}\label{sec:compl CF}
We consider a continued fraction on a compact subset of $\C$ which is a complex analog to the regular continued fractions.
At the beginning of this section we will first define some domains which will be important to define our complex continued fraction algorithm which we introduce in the sequel.

Similarly as in \cite{MMY3}, we consider the following sets:
\begin{align}
D_0&=\left\{z\in\C \; | \; |z+1| \le 1, \re(z) \geq \frac{\sqrt{3}}{2}-1\right\}&\textnormal{(Figure } {\text{\subref{fig:D0}}} \textnormal{),} \label{eq: def D0}\\
D_1&=\left\{z\in\C \; | \; |z| \geq 1, \left|z-\frac{\sqrt{3}}{3}\right| \le \frac{\sqrt{3}}{3}\right\}&\textnormal{(Figure }{\text{\subref{fig:D0}}}\textnormal{),} \label{eq: def D1}\\
D&=\left\{z\in\C \; | \; |z| \le 1, |z-i| \geq 1, |z+i| \geq 1, \re(z) > 0\right\}&\textnormal{(Figure }{\text{\subref{fig:D}}} \text{ and }\ref{fig:D partition}\textnormal{),} \label{eq: def D}
\end{align}

\renewcommand\thesubfigure{\arabic{figure}\alph{subfigure}} 
\captionsetup[subfigure]{labelfont = rm, textfont = normalfont, singlelinecheck = on} 
 
\begin{figure}[H]
\begin{subfigure}{0.49\textwidth}
\centering
\begin{tikzpicture}[scale=2.6]
\clip (-1.1,-1.2) rectangle (1.3,1.2);
  \draw[->] (-1.3,0) -- (1.3,0) ;
  \draw[->] (0,-1.2) -- (0,1.2) ;
  \node at (0,0) {\small$\bullet$}; \node at (0.07,-0.08) {$0$};
  \node at (-1,0) {\small$\bullet$}; \node at (-1,-0.12) {$-1$} ;
  \node at (1,0) {\small$\bullet$}; \node at (1-0.05,-0.11) {$1$} ;
  \node at (-0.13,0) {\small$\bullet$};  \node at (-0.3, -0.12)  {$\frac{\sqrt{3}-2}{2}$} ;
  \node at (0.577,0) {\small$\bullet$}; \node at (0.577,-0.12) {$\frac{\sqrt{3}}{3}$} ;
  \node at (1.154,0) {\small$\bullet$}; \node at (1.154,-0.12) {$\frac{2\sqrt{3}}{3}$};
  \node at (0,-1) {\small$\bullet$}; \node at (0.1,-1.1) {$-i$} ;
  \node at (0,1) {\small$\bullet$}; \node at (0.1,1.1) {$i$} ;
  
  \draw (-0.13, -1.5) -- (-0.13, 1.5);
  \draw (-2.1,1/2) -- (2.1,1/2);
  \draw (-2.1,-1/2) -- (2.1,-1/2);

  \draw  (0,0) circle [radius=1];
  \draw  (0.577,0) circle [radius=0.577];


  \draw  (-1, 0) circle [radius=1];

 \filldraw[opacity=0.2]  (0.87,0.5) arc[radius = 1, start angle = 30, end angle =-30]	
			arc[radius = 0.57, start angle = -60, end angle =60];

 \filldraw[opacity=0.2]  (-0.13,0.5) arc[radius = 1, start angle = 30, end angle =-30];

 \node(D0) at (-0.36,0.31) {$D_0$}; \draw[->] (-0.25,0.25)--(-0.05,0.05);
 \node(D1) at (0.7,0.3) {$D_1$}; \draw[->] (0.8,0.25)--(1+0.05,0.05);

\end{tikzpicture}
\caption{$D_0$ and $D_1$.}
\label{fig:D0}
\end{subfigure}
\begin{subfigure}{0.5\textwidth}
\centering
\begin{tikzpicture}[scale=2.6]
\clip (-1.1,-1.2) rectangle (1.3,1.2);
  \draw[->] (-1.3,0) -- (1.3,0) ;
  \draw[->] (0,-1.2) -- (0,1.2) ;
  \node at (0,0) {\small$\bullet$}; \node at (0.07,-0.08) {$0$};
  \node at (-1,0) {\small$\bullet$}; \node at (-1,-0.12) {$-1$} ;
  \node at (1,0) {\small$\bullet$}; \node at (1-0.05,-0.11) {$1$} ;
  \node at (0,-1) {\small$\bullet$}; \node at (0.1,-1.1) {$-i$} ;
  \node at (0,1) {\small$\bullet$}; \node at (0.1,1.1) {$i$} ;

  \draw  (0,0) circle [radius=1];
  \draw (0,1) circle [radius=1];
  \draw (0,-1) circle [radius=1];

 \filldraw[opacity=0.2]  (0,0) arc[radius = 1, start angle = 90, end angle =30]
		  arc[radius =1, start angle = -30, end angle = 30]
		  arc[radius = 1, start angle = -30, end angle = -90];
		  
 \node[fill=black!20] at (0.65,0) {$D$};
 
\end{tikzpicture}
\caption{$D$}
\label{fig:D}
\end{subfigure}
\end{figure}

\begin{align}
H_0&=\left\{z\in\C \; | \; |z-i| \le 1, |z+1| \geq 1, |\im(z)| \le \frac{1}{2}\right\}&\textnormal{(Figure }{\text{\subref{fig:H0Comp}}}\textnormal{),} \label{eq: def H0}\\
H_0'&= \left\{z\in\C \; | \; |z+i| \le 1, |z+1| \geq 1, |\im(z)| \le \frac{1}{2}\right\}&\textnormal{(Figure }{\text{\subref{fig:H0Comp}}}\textnormal{),} \label{eq: def H0'}\\
H & = H_0 \cup H_0' &\textnormal{(Figure }{\text{\subref{fig:H0Comp}}}\textnormal{),}  \label{eq: def H2} \\
\Delta&= D\cup H_0\cup H_0'=\left\{z\in\C \; | \; |z| \le 1, |z+1| \geq 1, |\im(z)| \le \frac{1}{2}\right\}&\textnormal{(Figure }{\text{\subref{fig:Delta}}}\textnormal{),} \label{eq: def Delta}
\end{align}

\begin{figure}[H]
\begin{subfigure}{0.45\textwidth}
\centering
\begin{tikzpicture}[scale=2.6]
\clip (-1.1,-1.2) rectangle (1.3,1.2);
  \draw[->] (-1.3,0) -- (1.3,0) ;
  \draw[->] (0,-1.2) -- (0,1.2) ;
  \node at (0,0) {\small$\bullet$}; \node at (0.07,-0.08) {$0$};
  \node at (-1,0) {\small$\bullet$}; \node at (-1,-0.12) {$-1$} ;
  \node at (1,0) {\small$\bullet$}; \node at (1-0.05,-0.11) {$1$} ;
  \node at (-0.13,0) {\small$\bullet$};  \node at (-0.3, -0.15)  {$\frac{\sqrt{3}-2}{2}$} ;
  \node at (0.57,0) {\small$\bullet$}; \node at (0.57,-0.12) {$\frac{\sqrt{3}}{3}$} ;
  \node at (0,-1) {\small$\bullet$}; \node at (0.1,-1.1) {$-i$} ;
  \node at (0,1) {\small$\bullet$}; \node at (0.1,1.1) {$i$} ;
  
  \draw (-0.13, -2.1) -- (-0.13, 2.1);
  \draw (-2.1,1/2) -- (2.1,1/2);
  \draw (-2.1,-1/2) -- (2.1,-1/2);

  \draw  (0,0) circle [radius=1];

  \draw (0,-1) circle [radius=1];
  \draw (0,1) circle [radius=1];
  \draw  (-1, 0) circle [radius=1];

 \filldraw[opacity=0.2]  (0.87,0.5) arc[radius = 1, start angle = -30, end angle =-90]	
			arc[radius = 1, start angle = 0, end angle =30];
 \filldraw[opacity=0.2]  (0.87,-0.5) arc[radius = 1, start angle = 30, end angle =90]	
			arc[radius = 1, start angle = 0, end angle =-30];

 \node[fill=black!20] at (0.3,0.27) {$H_0$};
 \node[fill=black!20] at (0.3,-0.27) {$H_0'$};

\end{tikzpicture}
\caption{$H_0$, $H_0'$ and $H = H_0\cup H_0'$.}
\label{fig:H0Comp}
\end{subfigure}
\begin{subfigure}{0.45\textwidth}
\centering
\begin{tikzpicture}[scale=2.6]
\clip (-1.1,-1.2) rectangle (1.3,1.2);
  \draw[->] (-1.3,0) -- (1.3,0) ;
  \draw[->] (0,-1.2) -- (0,1.2) ;
  \node at (0,0) {\small$\bullet$}; \node at (0.07,-0.08) {$0$};
  \node at (-1,0) {\small$\bullet$}; \node at (-1,-0.12) {$-1$} ;
  \node at (1,0) {\small$\bullet$}; \node at (1-0.05,-0.11) {$1$} ;
  \node at (0,-1) {\small$\bullet$}; \node at (0.1,-1.1) {$-i$} ;
  \node at (0,1) {\small$\bullet$}; \node at (0.1,1.1) {$i$} ;

  \draw (-2.5, 0.5) -- (1.5, 0.5);
  \draw (-2.5, -0.5) -- (1.5, -0.5);

  \draw (0, 0) circle [radius=1];
  \draw (-1, 0) circle [radius=1];

 \filldraw[opacity=0.2]  (0.87,0.5) arc[radius = 1, start angle = 30, end angle =-30]	
			-- (-0.13, -0.5)
			arc[radius = 1, start angle = -30, end angle =30];

 \node[fill=black!20] at (0.5,0) {$\Delta$};

\end{tikzpicture}
\caption{$\Delta$.}
\label{fig:Delta}
\end{subfigure}
\end{figure}

\begin{align}
D_\infty = &  \overline{\C}\backslash(D_0\cup \Delta\cup D)\notag\\
= & \left\{z\in \overline{\C} \; | \; |\im(z)| > \frac{1}{2}\right\} \cup \left\{z\in \overline{\C} \; | \; \re(z) > \frac{\sqrt{3}}{2}-1\right\} \notag\\
&\cup \left\{z\in\overline{\C} \; | \; \re(z) > \frac{\sqrt{3}}{2},~ \left|z-\frac{\sqrt{3}}{3}\right|>\frac{\sqrt{3}}{3}\right\}&\textnormal{(Figure }{\text{\subref{fig:Dinf}}}\textnormal{).} \label{eq: def Dinfty}
\end{align}

\begin{figure}[H]
\begin{subfigure}{0.49\textwidth}
\centering
\begin{tikzpicture}[scale=2.6]
\clip (-0.7,-1) rectangle (1.5,1.1);
  \draw[->] (-0.7,0) -- (1.3,0) ;
  \draw[->] (0,-1.2) -- (0,1.2) ;
  \node at (0,0) {\small$\bullet$}; \node at (0.07,-0.08) {$0$};
  \node at (-0.13,0) {\small$\bullet$}; \node at (-0.3, -0.12)  {$\frac{\sqrt{3}-2}{2}$} ;
  \node at (0.577,0) {\small$\bullet$}; \node at (0.577,-0.12) {$\frac{\sqrt{3}}{3}$};
  \node at (1.154,0) {\small$\bullet$}; \node at (1.154+0.12,-0.12) {$\frac{2\sqrt{3}}{3}$};

  \draw[very thick] (-0.13,-0.5) -- (-0.13,0.5);
  \draw[very thick] (-0.13,-0.5) -- (0.866,-0.5);
  \draw[very thick] (-0.13,0.5) -- (0.866,0.5);
  \draw[very thick] (1.154,0) arc (0:60:0.577);
  \draw[very thick] (1.154,0) arc (0:-60:0.577);

  \draw (-0.7, -0.5) -- (1.5, -0.5);
  \draw (-0.7, 0.5) -- (1.5, 0.5);

  \draw (-0.13, -1.2) -- (-0.13, 1.2);

  \draw (0.87, -0.75) -- (0.87, 1.25);
  \draw (0.577,0) circle [radius=0.577];

 \fill[opacity=0.5, pattern=north west lines]  (0.87,0.5) arc[radius = 0.57, start angle = 60, end angle =-60]	
			-- (0.87, -1.2)
			-- (2, -1.2)
			-- (2, 1.25)
			-- (0.87, 1.25);
 \fill[opacity=0.5, pattern=north west lines]  (0.87, 1.25) -- (-0.7, 1.25) -- (-0.7, -1.2)
			-- (0.87, -1.2) -- (0.87, -0.5) 
			-- (-0.13, -0.5)
			-- (-0.13, 0.5)
			-- (0.87, 0.5)
			-- (0.87, 1.25);

 \node at (-0.4,0.7) {$D_\infty$};

\end{tikzpicture}
\caption{$D_\infty$}
\label{fig:Dinf}
\end{subfigure}
\begin{subfigure}{0.5\textwidth}
\centering
\begin{tikzpicture}[scale=0.9]
  \draw[->] (-7.7,0) -- (1.5,0) ;
  \draw[->] (0,-3) -- (0,3) ;
  \node at (0,0) {\small$\bullet$}; \node at (0.1,-0.15) {$0$};
 \node at (0,-1) {\small$\bullet$}; \node at (0.1,-1.2) {$-i$} ;
 \node at (0,1) {\small$\bullet$}; \node at (0.1,1.2) {$i$} ;
 \node at (0.866,1/2) {\small{$\bullet$}};
 \node at (0.866,-1/2) {\small{$\bullet$}};
 \node at (-1/2,1.866) {\small{$\bullet$}};
 \node at (-1/2,-1.866) {\small{$\bullet$}};
 \node at (-3.732,0) {\small{$\bullet$}};
 \node at (-3.832+0.2,-0.2) {\small$-\!(\!\sqrt{3}\!+\!2)$};
  \draw (-1/2, -3) -- (-1/2, 3);
  \draw (-7.7,1/2) -- (1.5,1/2);
  \draw (-7.7,-1/2) -- (1.5,-1/2);
  \draw (0.866,-1/2) -- (0.866,1/2);

  \draw  (0,0) circle [radius=1];

  \draw (0,1) circle [radius=1];
  \draw (0,-1) circle [radius=1];

  \draw (-3.732,0) circle (3.732);
  
  \draw[very thick] (-1/2,1.866) arc (30:330:3.732);
  \draw[very thick] (-1/2,1.866) arc (120:-30:1);
  \draw[very thick] (-1/2,-1.866) arc (-120:30:1);
  \draw[very thick] (0.866,-1/2) -- (0.866,1/2);

 \fill[opacity=0.2] (-1/2,-1.866) arc [radius = 1, start angle = -120, end angle =30] -- (0.866,-0.5) -- (0.866,0.5) arc [radius = 1, start angle = -30, end angle =120] -- (-1/2,1.866) arc (30:330:3.732) ;

  \node[fill=black!20] at (-3.7,1) {$\iota(D_\infty)$};

\end{tikzpicture}
\caption{$\iota(D_\infty)$ where $\iota(z)=1/z$.}\label{fig:iota Dinfty}
\end{subfigure}
\end{figure}
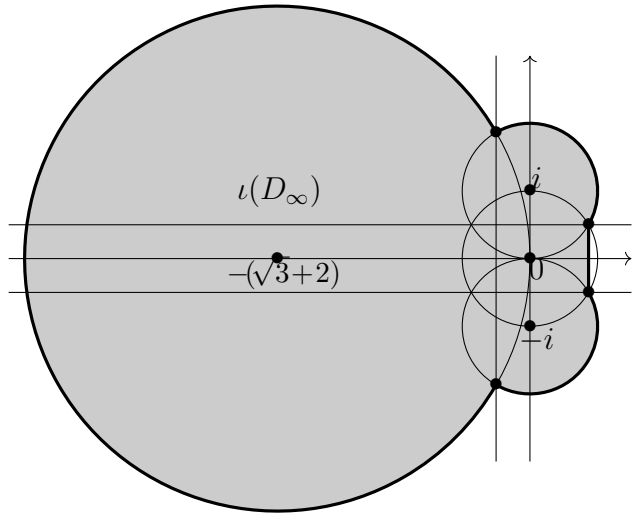
We define an extension of the continued fraction to the complex plane as follows. 
Let $z\in D$ as in Figure~\subref{fig:D}. Then $1/z \in \bigcup_{n\in\N} (n+\Delta)$, see Figure~\subref{fig:H0Comp}.
If $1/z \in n+\Delta$, then we take $m_1 = n$
and we set $z_1:= 1/z-m_1$. If $z_1 \in \Delta-D$, then we finish the process. 
If $z_1 \in D$, then we define $m_2$ by an integer $n$ such that $1/z_1 \in n+\Delta$.
Repeating this process, we obtain a continued fraction expansion $\{m_i\}_{i=1}^r$ such that
\begin{equation}\label{eq:CF}
\frac{1}{z_i}=m_{i+1}+z_{i+1} \quad \text{ for all }0< i\le r,
\end{equation}
such that $z_i\in D$ for $i<r$, and $z_r\in \Delta$. 
Let $D(m_1,\cdots, m_r)$ be the set of $z_0\in D$ whose first $r$ complex continued fraction entries equal $\{m_i\}_{i=1}^r$.

Note that this continued fraction algorithm does not coincide with Hurwitz' continued fractions, which are often used on the complex plane, but which are expanded with Gaussian integers \cite{Hu}, see also \cite{He}. The goal here is very different: our complex continued fraction expansion
follows as closely as possible the standard continued fraction of neighboring real points. For non-real numbers the iteration of our algorithm leads to an increasing sequence of imaginary parts and it stops after finitely many steps, since ``it is not possible anymore to associate a neighboring real point".
We will see in the following section why this continued fraction definition makes sense.

For $z\in\Delta$ and $i\ge 1$, let
\begin{equation}\label{eq:def eps}
 \varepsilon_i =
  \begin{cases}
  0 & \text{if } z\in D(m_1,\cdots, m_i) \text{ and } m_i = 1, \\
  1 & \text{otherwise}.
  \end{cases}
\end{equation}
We note here that the just defined $\varepsilon_i$ is independent to the definition of $\epsilon_i$ defined for the $\alpha$-continued fractions. However, since in the following sections we only consider the regular continued fraction algorithm with the Gauss map $\Gauss$ we do not expect too much confusion for the reader.

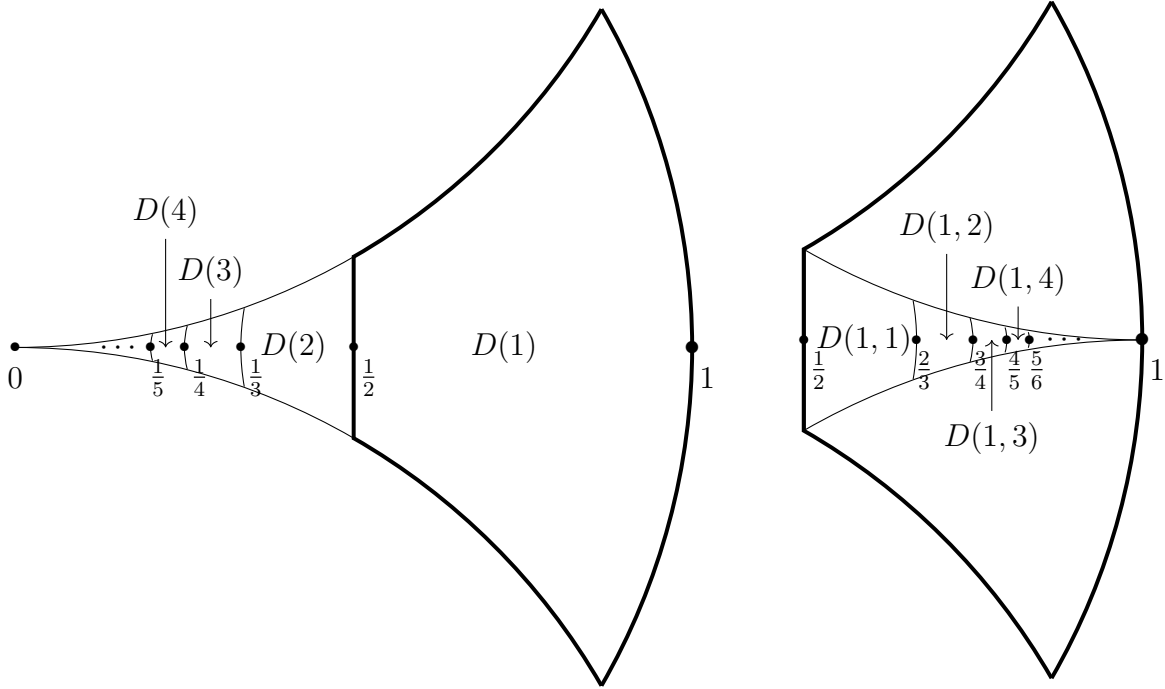
\begin{figure}
\begin{multicols}{2}
\begin{tikzpicture}[scale=1] 

\node at (0,0) {\tiny$\bullet$}; \node at (9*0,0-0.4){$0$};
\node at (9*1,0) {\tiny$\bullet$}; \node at (9*1+0.2,0-0.4){$1$};

\draw[line width=0.4pt] (9*1,9*0) arc (0:30:9*1);
\draw[line width=0.4pt] (9*1,9*0) arc (0:-30:9*1);
\draw[line width=0.4pt] (0,0) arc (-90:-30:9*1);
\draw[line width=0.4pt] (0,0) arc (90:30:9*1);

\node(d1) at (6.5,0) {$D(1)$}; 
\node(d2) at (3.7,0) {$D(2)$}; 
\node(d3) at (2.6,1) {$D(3)$}; \draw[->] (d3) -- (2.6,0);
\node(d4) at (2,1.8) {$D(4)$}; \draw[->] (d4) -- (2,0);

\draw[-, line width=1.5pt] (9*1/2, 9*0.136) -- (9*1/2, -9*0.136);
\node at (9*1/2,0){\tiny$\bullet$}; \node at (9*1/2+0.2,0-0.4){$\frac 12$};
\draw[line width=0.4pt] (9*1/3, 9*0) arc (180:180-10:9*1/2-9*1/3/2);
\draw[line width=0.4pt] (9*1/3, 9*0) arc (180:180+10:9*1/2-9*1/3/2);
\node at (9*1/3,0){\tiny$\bullet$}; \node at (9*1/3+0.2,0-0.4){$\frac 13$};
\draw[line width=0.4pt] (9*1/4, 9*0) arc (180:180-15:9*1/2/2-9*1/4/2);
\draw[line width=0.4pt] (9*1/4, 9*0) arc (180:180+15:9*1/2/2-9*1/4/2);
\node at (9*1/4,0){\tiny$\bullet$}; \node at (9*1/4+0.2,0-0.4){$\frac 14$};
\draw[line width=0.4pt] (9*1/5, 9*0) arc (180:180-17:9*1/3/2-9*1/5/2);
\draw[line width=0.4pt] (9*1/5, 9*0) arc (180:180+17:9*1/3/2-9*1/5/2);
\node at (9*1/5,0){\tiny$\bullet$}; \node at (9*1/5+0.1,0-0.4){$\frac 15$};
\node at (1.4,0) {$\cdots$};

\clip (9*1/2-0.01,-9*1/2-0.1) rectangle (9*1+0.1,9*1/2+0.1);

\node at (0,0) {$\bullet$}; \node at (9*0,0-0.4){$0$};
\node at (9*1,0) {$\bullet$}; \node at (9*1+0.2,0-0.4){$1$};

\draw[line width=1.5pt] (9*1,9*0) arc (0:30:9*1);
\draw[line width=1.5pt] (9*1,9*0) arc (0:-30:9*1);
\draw[line width=1.5pt] (0,0) arc (-90:-30:9*1);
\draw[line width=1.5pt] (0,0) arc (90:30:9*1);

\end{tikzpicture}

\hspace{9ex}
\begin{tikzpicture}[scale=1] 
\node(p1) at (9*1/2,0){\tiny$\bullet$}; 
\draw[-, line width=1.5pt] (9*1/2, 9*0.136) -- (9*1/2, -9*0.136);

\clip (9*1/2-0.01,-9*1/2-0.1) rectangle (9*1+0.5,9*1/2);

\node at (0,0) {$\bullet$}; \node at (9*0,0-0.4){$0$};
\node at (9*1,0) {$\bullet$}; \node at (9*1+0.2,0-0.4){$1$};

\draw[line width=1.5pt] (9*1,9*0) arc (0:30:9*1);
\draw[line width=1.5pt] (9*1,9*0) arc (0:-30:9*1);
\draw[line width=1.5pt] (0,0) arc (-90:-30:9*1);
\draw[line width=1.5pt] (0,0) arc (90:30:9*1);

\node(d11) at (5.3,0) {$D(1,1)$}; 
\node(d12) at (6.4,1.5) {$D(1,2)$};\draw[->] (d12) -- (6.4,0);
\node(d13) at (7,-1.3) {$D(1,3)$};\draw[->] (d13) -- (7,-0);
\node(d14) at (7.35,0.8) {$D(1,4)$};\draw[->] (d14) -- (7.35,0);
\node at (8,0) {$\cdots$};

\draw[-, line width=1.5pt] (9*1/2, 9*0.136) -- (9*1/2, -9*0.136);
\node at (9*1/2,0){\tiny$\bullet$}; \node at (9*1/2+0.2,0-0.4){$\frac 12$};

\draw[line width=0.4pt] (9*1/3, 9*0) arc (180:180-10:9*1/2-9*1/3/2);
\draw[line width=0.4pt] (9*1/3, 9*0) arc (180:180+10:9*1/2-9*1/3/2);
\node at (9*1/3,0){$\bullet$}; \node at (9*1/3+0.2,0-0.4){$\frac 13$};

\draw[line width=0.4pt] (9*1/4, 9*0) arc (180:180-15:9*1/2/2-9*1/4/2);
\draw[line width=0.4pt] (9*1/4, 9*0) arc (180:180+15:9*1/2/2-9*1/4/2);
\node at (9*1/4,0){$\bullet$}; \node at (9*1/4+0.2,0-0.4){$\frac 14$};

\draw[line width=0.4pt] (9*1,0) arc (-90:-120:9*1);
\draw[line width=0.4pt] (9*1,0) arc (90:120:9*1);

\draw[line width=0.4pt] (9*1/2,0) arc (-90:-133:9*0.226);
\draw[line width=0.4pt] (9*1/2,0) arc (90:133:9*0.226);

\draw[line width=0.4pt] (9*1/3,0) arc (-90:-133:9*0.111);
\draw[line width=0.4pt] (9*1/3,0) arc (90:133:9*0.111);

\draw[line width=0.4pt] (9*2/3,0) arc (0:10:9*1/3);
\draw[line width=0.4pt] (9*2/3,0) arc (0:-10:9*1/3);
\node at (9*2/3,0) {\tiny$\bullet$}; \node at (9*2/3+0.1,0-0.4){$\frac 23$};

\draw[line width=0.4pt] (9*3/4,0) arc (0:15:9*1/8);
\draw[line width=0.4pt] (9*3/4,0) arc (0:-15:9*1/8);
\node at (9*3/4,0){\tiny$\bullet$}; \node at (9*3/4+0.1,0-0.4){$\frac 34$};

\draw[line width=0.4pt] (9*4/5,0) arc (0:16:9*1/15);
\draw[line width=0.4pt] (9*4/5,0) arc (0:-16:9*1/15);
\node at (9*4/5,0){\tiny$\bullet$}; \node at (9*4/5+0.1,0-0.4){$\frac 45$};

\draw[line width=0.4pt] (9*5/6,0) arc (0:16:9*1/24);
\draw[line width=0.4pt] (9*5/6,0) arc (0:-16:9*1/24);
\node at (9*5/6,0){\tiny$\bullet$}; \node at (9*5/6+0.1,0-0.4){$\frac 56$};

\end{tikzpicture}
\end{multicols}
\caption{The sets $D(m_1,\cdots,m_r)$. The left figure is the partition of $D$ by $D(n)$. The right figure is of the sets $D(1,n)$ in $D(1)$.}\label{fig:D partition}
\end{figure}

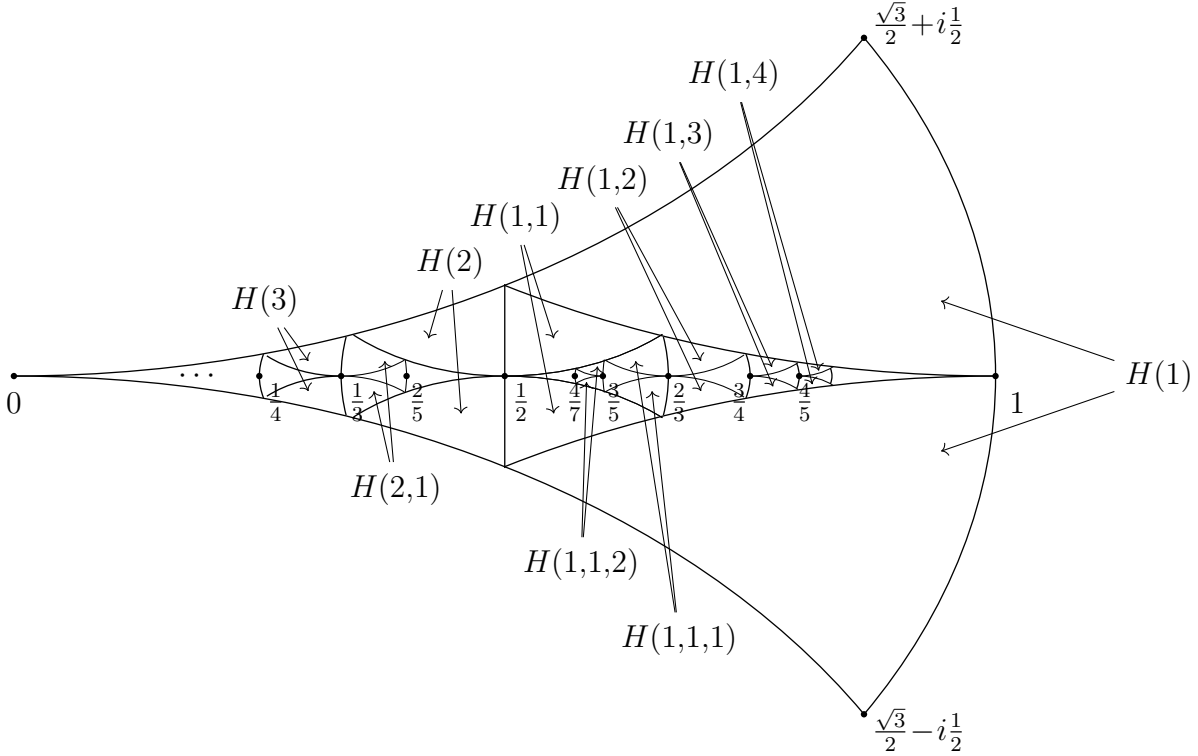
\begin{figure}
\centering
\begin{tikzpicture}[x=1.45cm,y=1cm]

\node at (0,0) {\huge$.$}; \node at (9*0,0-0.35){$0$};
\node at (9*1,0) {\huge$.$}; \node at (9*1+0.2,0-0.35){$1$};

\node at (9*0.866,9*1/2) {\huge$.$}; \node at (9*0.866+0.5,9*1/2+0.2) {$\frac{\sqrt{3}}{2}\!+\!i\frac{1}{2}$};
\node at (9*0.866,-9*1/2) {\huge$.$}; \node at (9*0.866+0.5,-9*1/2-0.2) {$\frac{\sqrt{3}}{2}\!-\!i\frac{1}{2}$};

\draw[line width=0.5pt] (9*1,9*0) arc (0:30:9*1);
\draw[line width=0.5pt] (9*1,9*0) arc (0:-30:9*1);
\draw[line width=0.5pt] (0,0) arc (-90:-30:9*1);
\draw[line width=0.5pt] (0,0) arc (90:30:9*1);

\node(d1) at (10.5,0) {$H(1)$}; \draw[-> ] (d1) -- (8.5,1);  \draw[-> ] (d1) -- (8.5,-1);
\node(d2) at (4,1.5) {$H(2)$}; \draw[-> ] (d2) -- (3.8,0.5); \draw[-> ] (d2) -- (4.1,-0.5);
\node(d3) at (2.3,1) {$H(3)$}; \draw[-> ] (d3) -- (2.7,0.2); \draw[-> ] (d3) -- (2.7,-0.2);
\node(d11) at (4.6,2.1) {$H(1,\!1)$};\draw[-> ] (d11) -- (4.95,0.5);\draw[-> ] (d11) -- (4.95,-0.5);
\node(d12) at (5.4,2.6) {$H(1,\!2)$};\draw[-> ] (d12) -- (6.3,0.2); \draw[-> ] (d12) -- (6.3,-0.2);
\node(d13) at (6,3.2) {$H(1,\!3)$};\draw[-> ] (d13) -- (6.95,0.12); \draw[-> ] (d13) -- (6.96,-0.14);
\node(d14) at (6.6,4) {$H(1,\!4)$};\draw[-> ] (d14) -- (7.38,0.07); \draw[-> ] (d14) -- (7.32,-0.11);
\node(d111) at (6.1,-3.5) {$H(1,\!1,\!1)$};\draw[-> ] (d111) -- (5.7,0.2); \draw[-> ] (d111) -- (5.85,-0.2);
\node(d112) at (5.2,-2.5) {$H(1,\!1,\!2)$};\draw[-> ] (d112) -- (5.35,0.13); \draw[-> ] (d112) -- (5.25,-0.08);
\node(d21) at (3.5,-1.5) {$H(2,\!1)$}; \draw[-> ] (d21) -- (3.4,0.2); \draw[-> ] (d21) -- (3.3,-0.2);

\draw[-, line width=0.5pt] (9*1/2, 9*0.136) -- (9*1/2, -9*0.136);
\node at (9*1/2,0){\huge$.$}; \node at (9*1/2+0.15,0-0.35){$\frac 12$};

\draw[line width=0.5pt] (9*1/3, 9*0) arc (180:180-10:9*1/2-9*1/3/2);
\draw[line width=0.5pt] (9*1/3, 9*0) arc (180:180+10:9*1/2-9*1/3/2);
\node at (9*1/3,0){\huge$.$}; \node at (9*1/3+0.15,0-0.35){$\frac 13$};

\draw[line width=0.5pt] (9*1/4, 9*0) arc (180:180-15:9*1/2/2-9*1/4/2);
\draw[line width=0.5pt] (9*1/4, 9*0) arc (180:180+15:9*1/2/2-9*1/4/2);
\node at (9*1/4,0){\huge$.$}; \node at (9*1/4+0.15,0-0.35){$\frac 14$};

\draw[line width=0.5pt] (9*1,0) arc (-90:-120:9*1);
\draw[line width=0.5pt] (9*1,0) arc (90:120:9*1);

\draw[line width=0.5pt] (9*1/2,0) arc (-90:-133:9*0.226);
\draw[line width=0.5pt] (9*1/2,0) arc (90:133:9*0.226);

\draw[line width=0.5pt] (9*1/3,0) arc (-90:-133:9*0.111);
\draw[line width=0.5pt] (9*1/3,0) arc (90:133:9*0.111);

\draw[line width=0.5pt] (9*2/3,0) arc (0:10:9*1/3);
\draw[line width=0.5pt] (9*2/3,0) arc (0:-10:9*1/3);
\node at (9*2/3,0) {\huge$.$}; \node at (9*2/3+0.1,0-0.35){$\frac 23$};

\draw[line width=0.5pt] (9*3/4,0) arc (0:15:9*1/8);
\draw[line width=0.5pt] (9*3/4,0) arc (0:-15:9*1/8);
\node at (9*3/4,0){\huge$.$}; \node at (9*3/4-0.1,0-0.35){$\frac 34$};

\draw[line width=0.5pt] (9*4/5,0) arc (0:16:9*1/15);
\draw[line width=0.5pt] (9*4/5,0) arc (0:-16:9*1/15);
\node at (9*4/5,0){\huge$.$}; \node at (9*4/5+0.05,0-0.35){$\frac 45$};

\draw[line width=0.5pt] (9*5/6,0) arc (0:16:9*1/24);
\draw[line width=0.5pt] (9*5/6,0) arc (0:-16:9*1/24);

\draw[black] (9*1/2,0) arc (-90:-48:9*0.24);
\draw[black] (9*1/2,0) arc (90:48:9*0.24);

\draw[black] (9*1/2,0) arc (-90:-48:9*0.24);
\draw[black] (9*1/2,0) arc (90:48:9*0.24);

\draw[black] (9*2/3,0) arc (-90:-46:9*0.111);
\draw[black] (9*2/3,0) arc (90:46:9*0.111);

\draw[black] (9*3/4,0) arc (-90:-46:9*0.07);
\draw[black] (9*3/4,0) arc (90:46:9*0.07);

\draw[black] (9*4/5,0) arc (-90:-46:9*0.05);
\draw[black] (9*4/5,0) arc (90:46:9*0.05);

\draw[black] (9*2/3,0) arc (-90:-130:9*0.1);
\draw[black] (9*2/3,0) arc (90:130:9*0.1);

\draw[black] (9*3/5,0) arc (180:180-7:9*1/5);
\draw[black] (9*3/5,0) arc (180:180+7:9*1/5);
\node at (9*3/5,0) {\huge$.$}; 
\node at (9*3/5+0.1,0-0.35){$\frac 35$};

\draw[black] (9*4/7,0) arc (180:180-15:9*1/21);
\draw[black] (9*4/7,0) arc (180:180+15:9*1/21);
\node at (9*4/7,0) {\huge$.$}; 
\node at (9*4/7,0-0.35){$\frac 47$};

\draw[black] (9*2/5,0) arc (0:-7:9*1/5);
\draw[black] (9*2/5,0) arc (0:+7:9*1/5);
\node at (9*2/5,0) {\huge$.$}; 
\node at (9*2/5+0.1,0-0.35){$\frac 25$};

\draw[black] (9*1/3,0) arc (-90:-50:9*0.1);
\draw[black] (9*1/3,0) arc (90:50:9*0.1);

\draw[black] (9*3/5,0) arc (-90:-135:9*0.04);
\draw[black] (9*3/5,0) arc (90:135:9*0.04);

\node at (1.7,0) {$\cdots$};
\end{tikzpicture}
\caption{The partition of $D$ by the sets $H(m_1,\cdots,m_r)$.}\label{fig:H partition}
\end{figure}

We define $p_\ell/q_\ell$ by
$$\frac{p_\ell}{q_\ell} = \cfrac{1}{m_1+\cfrac 1{m_2 + \cfrac 1{\ddots +\cfrac 1{m_\ell}}}}$$
and $q_0 = p_{-1}=q_{-2}=1$ and $p_0 = p_{-2}=q_{-1}=0$.
Then, 
\begin{equation}\label{eq:z_i}
z_0 = \frac{p_{i-1}z_i+p_{i}}{q_{i-1}z_i+q_{i}} \quad \text{ and } \quad z_i = -  \frac{q_{i}z_0 - p_{i}}{q_{i-1}z_0-p_{i-1}}.
\end{equation}
By \eqref{eq:z_i}, we can easily see that
\begin{equation}\label{eq:prod z_i}
\prod_{i=0}^\ell(-z_i) = q_\ell z_0 -p_\ell.
\end{equation}
We have 
\begin{equation}\label{eq:dirichlet2}
p_\ell-q_\ell z_0= p_\ell - q_\ell \frac{p_{\ell}z_{\ell+1}+p_{\ell+1}}{q_{\ell}z_{\ell+1}+q_{\ell+1}}
= \frac{p_\ell q_{\ell+1}- q_{\ell}p_{\ell+1}}{q_{\ell}z_{\ell+1}+q_{\ell+1}} = \frac{(-1)^{\ell+1}}{q_\ell z_{\ell+1}+q_{\ell+1}}.
\end{equation}
Thus, we have 
\begin{equation}\label{eq:dirichlet}|
p_\ell-q_\ell z_0| < \frac 1{q_{\ell+1}}.
\end{equation}
We set
\begin{equation}
H(m_1,\cdots,m_r) = D(m_1,\cdots,m_r)\setminus \mathrm{int}\left(\bigcup_{m_{r+1}\ge 1}D(m_1,\cdots,m_{r+1})\right)
\label{eq: def H}
\end{equation}
where it is defined by $H$ when $r=0$, see \eqref{eq: def H2} for the definition of $H$.
The sets $H(m_1,\cdots,m_r)$ give a partition of $D$ as in Figure~\ref{fig:H partition}.
Then we have 
\begin{align*}
 \big\{z:|\mathrm{Im}z|\le 1/2\big\} = \bigcup_{n\in\Z}\bigcup_{r\ge 0}\bigcup_{m_1,\cdots,m_r\ge 1}\big[H(m_1,\cdots,m_r)+n\big]\sqcup \R\backslash\Q,
\end{align*}
where the sets in the right-hand term have disjoint interiors.
A set $H(m_1,\cdots,m_r)+n$ meets $\R$ in a unique point which is rational.

\section{Complexification of the operators $T_k$ and $S$}\label{sec:complexTkS}
Let $J$ be a closed interval and $k\in\N$. Let $\Occ{k}{J}$ be the complex vector space of holomorphic functions in $\C\setminus J$, meromorphic in $\overline{\C}\setminus J$
with a zero at infinity of order at least $k$. 
There exists $C_{V,k}>0$ such that for each $\varphi\in\Occ{k}{J}$ and each neighbourhood $V$ of $J$ we have
\begin{equation}\label{eq:O^k bound}
|\varphi(z)|\le C_{V,k}|z|^{-k}\sup_{\C\setminus \overline{V}}|\varphi| \quad \text{for } z\in C\setminus\overline{V}.
\end{equation}
This fact can be easily proven in the following way. 
We obtain that the function $\dbtilde{\varphi}$ defined as 
$\dbtilde{\varphi}(w)=\varphi(w)\cdot w^k$
is still analytic on $\C\setminus \overline{V}$. 
By the maximum principle, we obtain that both functions $\dbtilde{\varphi}$ and $\varphi$ attain their maximum on the boundary of $\C\setminus \overline{V}$. Thus, we obtain for all $w\in \C\setminus \overline{V}$
\begin{align}
 \left|\varphi(w)\right|
 &=\frac{\left|\dbtilde{\varphi}(w)\right|}{\left|w\right|^k}
  \le \frac{\sup_{\C\setminus \overline{V}}|\dbtilde{\varphi}|}{\left|w\right|^k}
  =\frac{\sup_{\partial (\C\setminus \overline{V})}|\dbtilde{\varphi}|}{\left|w\right|^k}
  \le \frac{C_{V,k}\,\sup_{\C\setminus \overline{V}}|\varphi|}{\left|w\right|^k}\label{eq: max princ}
\end{align}
with $C_{V, k}= \sup_{x\in \partial (\C\setminus \overline{V})}\left|x\right|^k$.

Let $\Occ{-k}{J}$ be the complex vector space of holomorphic functions in $\C\setminus J$, meromorphic in $\overline{\C}\setminus J$
with a pole at infinity of order at most $k$.

\subsection{Hyperfunctions and extensions to spaces of complex analytic functions of the operators $T_k$ and $S$}
\label{subsec:cxBrjuno overview}
Recall that $T_k:=\T{k}{\alpha}$ as in \eqref{eq:k-Brjuno operator} for given $\alpha\in[1/2,1]$ and $S:=S_{\alpha}$.
We want to extend $T_k$ and $S$ for $\alpha=1$ to the space of complex analytic functions. 

We proceed as in \cite{MMY3}, namely we extend $T_k$ and $S$ to $A'([0,1])$ the space of hyperfunctions with support contained in $[0,1]$ for which we will first introduce some definitions. 
In Proposition \ref{pr:Tkest}, we will give the definition of these operators and prove that they are indeed well defined. In Proposition \ref{pr:inv}, we will see that this definition makes sense in terms of $T_k$ and $S$ being a complex extension of $T_k$ and $S$ being studied in the previous sections.

\medskip 

\subsubsection{Hyperfunctions}
Let $K$ be a non-empty compact set $K\subset\R$. Let us denote by $\cO(K)$ the space of functions analytic in a neighbourhood $V$ of $K$.
\emph{A hyperfunction with support in $K$} is a linear functional $u$ on $\cO(K)$, such that for all neighbourhoods $V$ of $K$, there is a constant $C_V>0$ such that
$$|u(\varphi)|\le C_V \sup_{V}|\varphi|, \quad \forall \varphi\in\cO(V).$$
We denote by $A'(K)$ the space of hyperfunctions with support in $K$.

The space $A'([0,1])$ is canonically isomorphic to $\Oc{1}$ the complex vector space of holomorphic functions on $\overline{\C}\setminus [0,1]$ vanishing at infinity. 
Let 
\begin{equation}\label{eq:c_z}
c_z(x)=\frac{1}{\pi(x-z)}.
\end{equation}
 Given $u\in A'([0,1])$, the corresponding $\varphi \in \Oc{1}$ is obtained by
\begin{equation}\label{eq:hyp_iso2}
\varphi(z)=u(c_z), \quad \forall z\in \C\setminus [0,1].
\end{equation}
On the other hand, for every $\varphi \in \Oc{1}$, the corresponding $u\in A'([0,1])$ is given by
\begin{equation}\label{eq:hyp_iso}
u(\psi)= \frac{i}{2} \int_{\gamma} \varphi(z)\psi(z)\mathrm{d}z, \quad \forall \psi \in \cO(V),
\end{equation}
where $V$ is a complex neighbourhood of $[0,1]$ and $\gamma$ is any piecewise $\mathcal{C}^1$ path winding around $[0,1]$ in the positive direction.
That $u$ as in \eqref{eq:hyp_iso} and $c_z$ as in \eqref{eq:c_z} are inverse to each other can be easily seen by using the substitution $\xi=1/\omega$ resulting in $\mathrm{d}\omega=-\xi^{-2}\mathrm{d}\xi$ which gives
\begin{align*}
 u(c_z)
 &=\frac{i}{2} \int_{\gamma} \varphi(\omega)c_z(\omega)\mathrm{d}\omega
 =\frac{i}{2} \int_{\gamma} \varphi(\omega)\,\frac{1}{\pi}\,\frac{1}{\omega-z}\,\mathrm{d}\omega
 =\frac{i}{2\pi} \int_{1/\gamma} \frac{\varphi(1/\xi)}{\left(1/\xi-z\right)(-\xi^2)}\,\mathrm{d}\xi\\
 &=\frac{1}{z}\int_{1/\gamma}\frac{i}{2\pi}\,\frac{\varphi(\xi^{-1})\xi^{-1}}{\xi-1/z}\,\mathrm{d}\xi
 =\frac{1}{z}\, \varphi(z)\,\left(\frac{1}{z}\right)^{-1}
 =\varphi(z),
\end{align*}
where we denote by $1/\gamma$ the transformed path and notice that $1/\gamma$ goes along the negative direction. 
We remark that in \eqref{eq:hyp_iso} we correct  the formula for $u$ given in \cite[Appendix 2]{MMY3} where it is given with an additional factor $\pi$.
We write $u(x)=\frac{1}{2i}(\varphi(x+i0)-\varphi(x-i0))$ (see \cite[Appendix~2]{MMY3} for details).

\subsubsection{Formulas of the extensions of $T_k$ and $S$}
We obviously have $S=-T_1$. Hence, for the following we will introduce the main formulas for $T_k$ only and give explanations where differences occur for $S$.
For $k \in \N$, $m\in\N^*$ and $f\in L^2([0,1])$, let us consider
\begin{equation}\label{eq:Tkm}
 T_{k,m}f(x) = 
 \left\{
     \begin{array}{ll}
       x^kf\left(\frac{1}{x}-m\right), & \text{if }x \in \left[\frac{1}{m+1},\frac{1}{m}\right],\\
       0, & \textnormal{otherwise}.
     \end{array}
   \right. 
\end{equation}
We then have $T_k=\sum_{m=1}^{\infty}T_{k,m}$.
For $\varphi,~ \psi\in L^2([0,1])$,
we define the $L^2$-adjoint $T^*_{k,m}$ 
by
\begin{equation}
\int_{0}^{1}T_{k,m}\varphi(x)\psi(x)dx=\int_{0}^{1}\varphi(x)T^*_{k,m}\psi(x)dx. \label{eq: L2 adjoint}
\end{equation}
Therefore, we have
\begin{equation}\label{eq:Tkmadj}
 T^*_{k,m}\psi(x)=\frac{1}{(m+x)^{k+2}}\psi\left(\frac{1}{m+x}\right).
\end{equation}

By a slight generalisation of \cite[(1.10)]{MMY3}, we have that if $\psi$ is holomorphic in a neighbourhood $V$ of $[0,1]$, then $T^*_{k,m}\psi$ is holomorphic in $V$ and we have
\begin{equation*}
\sup_{V}|T^*_{k,m}\psi|\le \frac{1}{2m^{k+2}}\sup_{V}|\psi|,
\end{equation*}
where we may take $V= D_\infty^c$ equipped with the Poincar\'{e} metric on the hyperbolic Riemann surface
$\overline{\C}\setminus [0,1]$.
It follows that the series $\sum_{m=1}^{\infty}T_{k,m}u$ 
converges to a hyperfunction $T_{k}u$ in $A'([0,1])$.

We have
\begin{equation*}
T_{k,m}^\ast c_z(x) = 
- z^k  \left(c_{\frac{1}{z}-m}(x) - c_{-m}(x)\right) + \sum_{n=1}^k \frac{z^{k-n}}{n!} \cdot \frac{\partial^n}{\partial z^n}c_z(x)\Big|_{z=-m},
\end{equation*}
where $c_z$ is as in \eqref{eq:c_z}.
For $\varphi\in \mathcal{O}^1(\overline{\C}\backslash[0,1])$, let $u$ be the corresponding hyperfunction in $ A'([0,1])$ as in \eqref{eq:hyp_iso2}.
Then, $T_{k,m}\varphi$ is defined by $(T_{k,m} u)(c_z)$.
If $z\not\in [\frac{1}{m+1},\frac{1}{m}]$, then we have
$$
(T_{k,m}u)(c_z) = u(T_{k,m}^\ast c_z).
$$
This follows from \eqref{eq: L2 adjoint} and the fact that for $z\notin [\frac{1}{m+1},\frac{1}{m}]$ 
we have that $T_{k,m}u$ and $T_{k,m}^\ast c_z$ are analytic. Hence, \eqref{eq: L2 adjoint} already implies the equality.

By \eqref{eq:hyp_iso2} and \eqref{eq:hyp_iso}, we have
\begin{align*}
u(T_{k,m}^\ast c_z) & = - z^k  \left(u(c_{\frac{1}{z}-m}(x)) - u\left(c_{-m}(x)\right)\right) + \sum_{n=1}^k \frac{z^{k-n}}{n!} u\left(\frac{\partial^n}{\partial z^n}c_z(x)|_{z=-m}\right)\\
& = - z^k\left(\varphi\left(\frac1z - m\right)-\varphi(-m)\right) + \sum_{n=1}^{k}\frac{z^{k-n}}{n!}\varphi^{(n)}(-m).
\end{align*}

For $\varphi\in \Oc{1}$, the formula for $T_k$ is given by
\begin{equation}\label{eq:Tk}
 T_k\varphi (z) = - \sum_{m=1}^{\infty} z^k\left(\varphi\left(\frac{1}{z}-m\right)-\varphi(-m)\right) + \sum_{m=1}^{\infty}\sum_{n=1}^{k}\frac{z^{k-n}}{n!}\varphi^{(n)}(-m),
\end{equation}
which will be shown in Proposition~\ref{pr:Tkest}.
By using $S = -T_1$, we can also deduce that a natural extension of $S$ to the space of complex analytic functions is
\begin{equation}\label{eq:S}
S\varphi(z) = \sum_{m=1}^\infty z\left(\varphi\left(\frac{1}{z}-m\right)-\varphi(-m)\right)- \sum_{m=1}^\infty \varphi'(-m).
\end{equation}

\subsubsection{Algebraic properties of the inversion of $(1-T_k)$ and $(1-S)$, the monoid $\mathcal{M}$ and its actions.}
 Let us consider the monoid
\begin{equation*}
\mathcal{M}=\left\{g=\begin{pmatrix} a & b \\ c& d \end{pmatrix} \in \GL_2(\Z) \; | \; d\geq b \geq a \geq 0 \textnormal{ and } d\geq c\geq a\geq 0 \right\}\cup \begin{pmatrix}1& 0\\ 0 & 1\end{pmatrix},
\end{equation*}
which is generated by $g(m) := \left(\begin{smallmatrix}0&1\\1&m\end{smallmatrix}\right)$ for $m\in \N$.
For $g\in\cM$ 
and $\varphi\in\Oc{1}$, we define
\begin{equation}\label{eq:Lkg}
 L_{k,g} \varphi (z) := \det(g)^{k+1}\Bigg[(a-cz)^k\varphi\left(\frac{dz-b}{a-cz}\right)-\sum_{n=0}^{k}(a-cz)^{k-n}\frac{\det(g)^{n}}{c^{n} n!}\varphi^{(n)}\left(-\frac{d}{c}\right)\Bigg].
\end{equation}
and
\begin{equation}\label{eq:Lg}
 \wL_{g} \varphi (z) := \det(g)(a-cz)\left(\varphi\left(\frac{dz-b}{a-cz}\right)-\varphi\left(-\frac{d}{c}\right)\right) -\frac{1}{c}\varphi'\left(-\frac{d}{c}\right)
  = \det(g) L_{1,g}\varphi(z).
\end{equation}
As we will see in Proposition~\ref{pr:inv}, we have
\begin{equation}\label{eq:(1-Tk)^{-1}}
(1-T_k)^{-1}\varphi(z)=\sum_{j=0}^{\infty}T_k^j\varphi(z)=\sum_{g\in\mathcal{M}}L_{k,g}\varphi(z),
\end{equation}
and from analogous arguments we will see that
\begin{equation}\label{eq:1-S}
(1-S)^{-1}\varphi(z)=\sum_{j=0}^{\infty}S^j\varphi(z)=\sum_{g\in\mathcal{M}}\wL_{g}\varphi(z).
\end{equation}
For $g =\big(\begin{smallmatrix}a&b\\c&d \end{smallmatrix}\big) $
and $n\in\Z$, let
\begin{equation}\label{eq:Lkgn}
 L_{g}^{(n)}\varphi(z):=\frac{\det(g)^{n+1}}{(a-cz)^n}\varphi\left(\frac{dz-b}{a-cz}\right)
\end{equation}
 and
\begin{equation}\label{eq:Lgn}
 \wL_{g}^{({n})}\varphi(z):=\frac{\det(g)^n}{(a-cz)^{n}}\,\varphi\left(\frac{dz-b}{a-cz}\right)
 =\det(g)\,L_{g}^{(n)}\varphi(z).
\end{equation}

Analogously to \cite[(2.2)]{MMY3}, we are interested in a connection between $L_{k,g}$ and $L_g^{(k+2)}$ and between $\wL_g$ and $\wL_g^{(3)}$ respectively. 
In the first case, we obtain by induction that
 for $t\in \N^*$ and $n \in \Z$, we have
\begin{align*}
 (L_{g}^{(n)}\varphi)^{(t)} = & \det(g)^{n+1} \sum_{\ell=0}^{t-1} (n+\ell)(n+\ell+1)\cdots (n+t-1) \binom{t}{\ell} c^{t-\ell} \det(g)^\ell L_{g}^{(n+t+\ell)}\varphi^{(\ell)} \\
 & + \det(g)^{t+n+1} L_g^{(n+2t)}\varphi^{(t)},
\end{align*}
where $\sum_{\ell=0}^{-1}$ denotes an empty sum.
In particular, for all $k\in \N^*$, we have
\begin{equation}\label{eq: Lgnphit2}
(L_{g}^{(-k)}\varphi)^{(k+1)}=L_{g}^{(k+2)}\varphi^{(k+1)}. 
\end{equation}

\subsubsection{Polynomial corrections to the actions of $\mathcal{M}$}
If $\varphi \in \Oc{-k}$, then it can be uniquely written as
\begin{equation}\label{eq:phidecomposition}
 \varphi(z)=\xi_kz^k+ \xi_{k-1}z^{k-1}+\cdots+ \xi_0+p_{k}(\varphi)(z), 
\end{equation}
with $p_{k}(\varphi)\in \Oc{1}$.
Note that if $\varphi\in\Oc{-k}$, then 
$\varphi^{(k+1)}=p_{k}(\varphi)^{(k+1)}$.
If $g =\big(\begin{smallmatrix}a&b\\c&d \end{smallmatrix}\big) \in \GL_2(\Z)$ and $\psi(z)=\xi_kz^k+\xi_{k-1}z^{k-1}+\cdots+ \xi_0$, then
\begin{equation}\label{eq:L_g^(-k)psi}
L_{g}^{(-k)}\psi(z)= \det(g)^{1-k}\big(\xi_k(dz-b)^k+ \xi_{k-1}(dz-b)^{k-1}(a-cz)+\cdots+ \xi_0(a-cz)^k \big),
\end{equation}
which means that $p_k( L_g^{(-k)}\psi)=0$.
Thus, $p_k$ is a projection from $\Oc{-k}$ to $\Oc{1}$.
We have the formula
\begin{equation}\label{eq: Lkg projection}
L_{k,g}\varphi =p_k(L_{g}^{(-k)}\varphi)
\end{equation}
for $g\in \mathcal{M},~ \varphi\in\Oc{1}$ which defines an action of $\mathcal{M}$ on $\Oc{1}$, where $p_k$ depends on $k$. 
To verify this formula we define $\psi(u)\coloneqq (L_{g}^{(-k)}\varphi)\big(\frac{a}{c}-\frac{1}{cu}\big)$. Since 
$\lim_{z\to\infty}p_k(L_{g}^{(-k)}\varphi)(z)=0$ is equivalent to $\lim_{u\to 0} \psi(u)=0$, it is enough to look at a projection of $\psi$. We have 
\begin{align*}
 \psi(u)= \det(g)^{1-k} u^{-k}\varphi\left(\frac{u\det(g)-d}{c}\right)
\end{align*}
and if we denote by $\widetilde{p}_k$ the projection to a function vanishing at $0$, then we obtain 
\begin{align}
 \widetilde{p}_k\left(\psi(u)\right)
&\nonumber  = \psi(u)- \det(g)^{1-k} u^{-k} \sum_{n=0}^k  \frac{\psi^{(n)}(0)\, u^n}{n!} \\
& = \psi(u)- \det(g)^{1-k} \sum_{n=0}^k  \frac{\det(g)^n\, \varphi^{(n)}\left(-\frac{d}{c}\right)\, u^{n-k}}{c^n n!}.\label{eq: Lkg proj calc}
\end{align}
Substituting back $u=(a-cz)^{-1}$ gives \eqref{eq: Lkg projection}. 
From the definition of $L_{k,g}$ as in \eqref{eq:Lkg} and \eqref{eq:L_g^(-k)psi}, the following diagram commutes:
\begin{center}
\begin{tikzpicture}
  \matrix (m) [matrix of math nodes,row sep=3em,column sep=4em,minimum width=2em]
  {
     \Oc{-k} & \Oc{-k} \\
     \Oc{1} & \Oc{1} \\};
  \path[-stealth]
    (m-1-1) edge node [left] {$p_k$} (m-2-1)
            edge node [above] {$L_{g}^{(-k)}$} (m-1-2)
    (m-2-1.east|-m-2-2) edge node [below] {$L_{k,g}$} (m-2-2)
    (m-1-2) edge node [right] {$p_k$} (m-2-2);
\end{tikzpicture}
\end{center}
By \eqref{eq:Lkg} and \eqref{eq:Lg}, the connection between $\wL_g$ and $\wL_g^{(-1)}$ is similar as they only differ by a factor $\det(g)$ from $L_{1,g}$ and $L_g^{(-1)}$.

By the calculations in \eqref{eq: Lkg proj calc}, the $(k+1)$th derivative of the difference between $L_{k,g}\varphi$ and $L_g^{(-k)}\varphi$ is zero for $\varphi\in\Oc{1}$.
Thus, it follows from \eqref{eq: Lgnphit2} that
\begin{equation}
(L_{k,g}\varphi)^{(k+1)}(\xi)=L_{g}^{(k+2)}\varphi^{(k+1)}(\xi)
= \frac{\det(g)^{k+1}}{(a-c\xi)^{k+2}} \varphi^{(k+1)}(g^{-1}.\xi),\label{eq: Lkgvarphik+1}
\end{equation}
where $\varphi^{(k+1)}\in \Oc{k+2}$. 
By Taylor's theorem, we have for every $\omega \in \overline{\C}\backslash[0,1]$ that
\begin{align*}
 L_{k,g}\varphi(z)
 &= L_{k,g}\varphi(\omega) + (L_{k,g}\varphi)'(\omega)(z-\omega)+\cdots+\frac{(L_{k,g}\varphi)^{(k-1)}}{(k-1)!}(z-\omega)^{k-1}\\
 &\qquad+\int_\omega^z\frac{(L_{k,g}\varphi)^{(k+1)}(\xi)}{k!}(z-\xi)^kd\xi.
\end{align*}
When $\omega$ goes to $\infty$, then only the remainder term will be left since $\frac{(L_{k,g}\varphi)^{(\ell)}(\omega)}{\ell!}(z-\omega)^{\ell}\in \Oc{1}$ for all $0\le \ell \le k-1$.
From \eqref{eq: Lkgvarphik+1}, the remainder term can be written as
$$\frac{\det(g)^{k+1}}{k!}\int_\infty ^z\frac{(z-\xi)^k}{(a-c\xi)^{k+2}}\varphi^{(k+1)}(g^{-1}.\xi)d\xi.$$
By a change of variable with $\xi=\xi(t)= g.\left(-\frac{d}{c}+\frac{\det(g)t}{c(a-cz)}\right)$, which means that 
$\frac{1}{(a-c\xi)^2}d\xi = \frac{1}{c(a-cz)} dt$ and $\xi = -\frac{(a-cz)}{ct}+\frac ac$, we then deduce that the remainder term equals
$$\frac{\det(g)^{k+1}}{k!}\int_0^1\left(\frac{1-t}{c}\right)^k\frac{\varphi^{(k+1)}(-\frac{d}{c}+\frac{\det(g)t}{c(a-cz)})}{c(a-cz)}dt.$$
Then, for $\varphi\in\Oc{1}, g\in\mathcal{M}$ and $z\notin \left[\frac{b}{d}, \frac{a}{c}\right]$, we have
\begin{equation}\label{eq: Lkgphi1}
L_{k,g}\varphi(z) =\frac{c^{-(k+1)}\det(g)^{k+1}}{k!}(a-cz)^{-1} \int_0^1(1-t)^k\varphi^{(k+1)}\left(-\frac{d}{c}+\frac{\det(g)t}{c(a-cz)}\right)dt. 
\end{equation}
We then have for the special case $g=g(m)$ that
\begin{align}
 L_{k,g(m)} \varphi (z)& = -z^k\left(\varphi\left(\frac{1}{z}-m\right)-\varphi\left(-m\right)\right)
+\sum_{n=1}^{k}\frac{z^{k-n}}{n!}\varphi^{(n)}\left(-m\right)\label{eq: Lkg(m)phi}\\
&=\frac{ (-1)^{k}}{z\,k!}\int_{0}^{1}\varphi^{(k+1)}\left(\frac{t}{z}-m\right)(1-t)^k\mathrm{d}t.\label{eq: Lkg(m)phi1}
\end{align}
Connected to that we note that by \eqref{eq: Lkgvarphik+1} $L_{g}^{(k+2)}$ can be represented as
\begin{align}
 L_g^{(k+2)}\psi(z)= 
\frac{\det(g)^{k+1}}{(a-cz)^{k+2}}\psi\left(\frac{dz-b}{a-cz}\right)=\big(L_{k,g} \psi^{(- (k+1))} \big)^{(k+1)}(z) , \label{eq: Lkg deriv}
\end{align}
for $\psi \in \Oc{-k}$, where $\psi^{(-(k+1))}$ denotes the $(k+1)$th primitive (i.e.\ an integration between $\infty$ and $z$) of $\psi$.
In the special case that $g=g(m)$, we have that $\det\left( g(m)\right)=-1$ and thus
\begin{align}
L_{g(m)}^{(k+2)} \psi (z) &=-z^{-(k+2)}\psi\left(\frac{1}{z}-m\right).\label{eq: Lkg(m) deriv}
\end{align}

For $\varphi \in \Oc{1}$ we can immediately conclude from \eqref{eq:Lg}, \eqref{eq: Lkg(m)phi} and \eqref{eq: Lkg(m)phi1} that
\begin{equation}\label{eq:Lgint} 
\wL_{g(m)} \varphi (z) = z\left(\varphi\left(\frac{1}{z}-m\right)-\varphi\left(-m\right)\right)-\varphi'\left(-m\right)=\frac{1}{z}\int_{0}^{1}\varphi''\left(\frac{t}{z}-m\right)(1-t)dt. 
\end{equation}

We also note that for simplicity we were restricting us to $\varphi\in \Oc{k}$. However, the same calculations of this section hold true if we consider instead 
$\gamma_1> \gamma_0>-1$, $I=[\gamma_0,\gamma_1]$, $z\notin \left[0,\frac{1}{\gamma_0+m}\right]$, and $\varphi\in \Occ{k}{I}$.

\section{Convergence of the sum over the monoid and boundary behaviour}\label{sec:sumboundary}

In this Section we will show how to adapt the results of Sections 3 and 4 of \cite{MMY3} to our monoid action. Also in our case the
complex continued fraction introduced in Section \ref{sec:compl CF} will play a fundamental role.

\subsection{Convergence of the sum over the monoid}
From the following proposition we will be able to deduce that $T_k$ is given by \eqref{eq:Tk}. It is an analog to \cite[Prop.~3.1]{MMY3}.

Let $-1<\gamma_0<\gamma_1$, $I=[\gamma_0,\gamma_1]$, $J=\Big[0,\frac{1}{(1+\gamma_0)}\Big]$, 
and 
\begin{align}
U_\varepsilon:= \{z\in\C\;|\;\re(z)\in(-\infty,\gamma_0-\varepsilon]\cup[ \gamma_1+\varepsilon,+\infty) \text{ or } \im(z)\notin(-\varepsilon,\varepsilon)\}.\label{eq: def Ueps}
\end{align}
\begin{prop}\label{pr:Tkest}
We have the following statements.
\begin{enumerate}[(i)]
\item\label{en: 1} For all $\varphi\in\Occ{1}{I}$, the series $\sum_{m=1}^\infty L_{k,g(m)}\varphi$ converges uniformly 
on compact subsets $K \subseteq \overline{\C}\setminus J$ to a function $T_k\varphi\in \Occ{1}{J}$ and there exist $\varepsilon>0$
and $C_{K,k}>0$ such that $\displaystyle{\sup_{K}|T_k\varphi|\le C_{K,k}\sup_{U_{\varepsilon}}|\varphi|}.$
 \item\label{en: 2} For all $\psi\in\Occ{k+2}{I}$, the series $\sum_{m=1}^\infty L_{g(m)}^{(k+2)}\psi$ converges uniformly on compact subsets $K \subseteq \overline{\C}\setminus J$ to a function
in $\Occ{k+2}{J}$, which we denote by $T_{k}^{(k+2)}\psi$
and there exist $\varepsilon>0$ and $\overline{C}_{K,k}>0$ such that $\displaystyle{\sup_{K}|T_{k}^{(k+2)}\psi|\le \overline{C}_{K,k} \sup_{U_{\varepsilon}}|\psi|}.$ 
\item\label{en: 3} For all $\varphi \in \Occ{1}{I}$, we have $T^{(k+2)}_k\varphi^{(k+1)}=(T_k\varphi)^{(k+1)}$.
\end{enumerate}
\end{prop}

\begin{rmk}
 Analogous statements can immediately be deduced for $\wL_{g(m)}$ and $S$ by using the relations $\wL_{g(m)}=-L_{1,g(m)}$ and $S=-T$.
\end{rmk}

\begin{proof}[Proof of Proposition \ref{pr:Tkest}]
 Let $\varepsilon>0$, then, from \eqref{eq:O^k bound},
there exists $c_{1,\varepsilon,k}>0$ such that for all $z\in U_\varepsilon$ and $\psi\in \Occ{k+2}{I}$ we have
$|\psi(z)|\le c_{1,\varepsilon,k} |z|^{-(k+2)}\sup_{U_\varepsilon}|\psi|$. 

If $K \subseteq \overline{\C}\setminus J$ is compact, then there exists $\varepsilon(K)\in \left(0, \max\{|\gamma_0|, |\gamma_1|\}\right)$ such that
$\frac{1}{z}-m\in U_{\varepsilon(K)}$, for all $z\in K$ and $m\in \N$, 
which implies that $\veps(K)\le |1/z-m|$.
Also, there exist $c_{2,K}>0$ and $M_{\gamma_0}\in\mathbb{N}$ such that for all $z\in K$ and $m\geq M_{\gamma_0}$ we have that 
$\Big|\frac{1}{z}-m\Big|^{-1}\le c_{2,K} m^{-1}$. 
Therefore, for all $z\in K$ and $\psi\in \Occ{k+2}{I}$ we have
\begin{align}
 \sum_{m=1}^{\infty}\left|\psi\left(\frac{1}{z}-m\right)\right|\le& \sum_{m=1}^{\infty}c_{1, \varepsilon(K)} \left|\frac{1}{z}-m\right|^{-(k+2)}\sup_{U_{\varepsilon(K)}}|\psi|\notag\\
=& c_{1, \varepsilon(K),k}\sup_{U_{\varepsilon(K)}}|\psi|\left(\sum_{m=1}^{M_{\gamma_0}-1} \left(\left|\frac{1}{z}-m\right|^{-1}\right)^{k+2}+\sum_{m=M_{\gamma_0}}^{\infty} \left( \left|\frac{1}{z}-m\right|^{-1}\right)^{k+2}\right)\notag\\
\le& c_{1, \varepsilon(K),k}\sup_{U_{\varepsilon(K)}}|\psi|\left(\sum_{m=1}^{M_{\gamma_0}-1} \left({\varepsilon(K)}^{-1}\right)^{k+2}+\sum_{m=M_{\gamma_0}}^{\infty} \left({c_{2,K}}\, m^{-1}\right)^{k+2}\right)\notag\\
=& c_{1,\varepsilon(K),k}\sup_{U_{\varepsilon(K)}}|\psi|\left((M_{\gamma_0}-1){\varepsilon(K)}^{-(k+2)}+{c_{2,K}}^{k+2}\sum_{m=M_{\gamma_0}}^{\infty} m^{-(k+2)}\right)\notag\\
\le&{\overline{C}_{K,k}}\sup_{U_{\varepsilon(K)}}|\psi|,\label{eq: sum m infty}
\end{align}
with $\overline{C}_{K,k}=c_{2,\varepsilon(K),k}\big((M_{\gamma_0}-1)\varepsilon(K)^{-(k+2)}+c_{2,K}^{k+2}\sum_{m=M_{\gamma_0}}^{\infty} m^{-(k+2)}\big)$ and we obtain the first part of \eqref{en: 1}.
By integrating $k+1$ times between $\infty$ and $z$ and substituting $\varphi=\psi^{(-(k+1))}$, 
we obtain the first part of \eqref{en: 2}. 
The assertion \eqref{en: 3} then follows immediately.

Hence, by \eqref{eq: Lkgvarphik+1}, \eqref{eq: Lkg(m) deriv} and \eqref{eq: sum m infty} we have for all $\varphi\in\Occ{1}{I}$ and  all $z\in K$ that
\begin{align*}
|(T_k\varphi)^{(k+1)}(z)|&=|(T_k^{(k+2)}\varphi^{(k+1)})(z)|
\le \sum_{m=1}^{\infty}\left|L_{g(m)}^{(k+2)}\varphi^{(k+1)}(z)\right|\\
&\le |z|^{-(k+2)}\cdot \sum_{k=1}^{\infty}\left|\varphi^{(k+1)}\left(\frac{1}{z}-m\right)\right|
\le \overline{C}_{K,k} |z|^{-(k+2)}\sup_{U_{\varepsilon(K)}}|\varphi^{(k+1)}|,
\end{align*}
implying \eqref{en: 2}. We conclude \eqref{en: 1} by using Cauchy's formula.
\end{proof}

In particular, we immediately obtain from \eqref{eq: Lkg(m)phi} that
\begin{equation*}
 T_k\varphi (z) =- \sum_{m=1}^{\infty} z^k\left(\varphi\left(\frac{1}{z}-m\right)-\varphi(-m)\right) + \sum_{m=1}^{\infty}\sum_{n=1}^{k}\frac{z^{k-n}}{n!}\varphi^{(n)}(-m)
\end{equation*}
proving \eqref{eq:Tk}.

Let $D_{\infty}$ be given as in \eqref{eq: def Dinfty}
and for $\rho>0$ let
\begin{equation*}
V_\rho(D_\infty)=\{z\in\overline{\C}\setminus[0,1]\;|\;d_{\text{hyper}}(z,D_\infty)<\rho\},
\end{equation*}
where $d_{\text{hyper}}$ denotes the Poincar\'{e} metric on $\overline{\C}\setminus[0,1]$.

To define the complex $k$-Brjuno function, we deal with $(1-T_k)^{-1}$ as in the real case, see \eqref{eq:brjuno func eq} and also \eqref{eq:cx brjuno}.
The following proposition is an analogous property to \cite[Thm. 2.6]{MMY1} for the real Brjuno function and it is a generalization of \cite[Prop. 3.3]{MMY3} for the case of $k=1$, which guarantees that $(1-T_k)^{-1} = \sum_{r=1}^\infty T_k^r$ converges.

However, before we state this proposition we first give an estimate of the derivatives of $\varphi$ using Cauchy's integral formula.
For $\varphi\in\Oc{n}$, let $\dbtilde{\varphi}(z)= \varphi(z)\cdot z^{n}$.
Then we have 
$$\varphi^{(j)}(z)=\sum_{i=0}^{j}{j \choose i} \frac{(-1)^i(n+i-1)!}{(n-1)!}\cdot \frac{\dbtilde{\varphi}^{(j-i)}(z)}{z^{n+i}}.$$
By Cauchy's integral formula, for a circle $\{\omega:|\omega-z|=R\}$ contained in $V_\rho(D_{\infty})$ whose center is given by $z\in V_\rho(D_\infty)$, we have
$$
 |\dbtilde{\varphi}^{(\ell)}\left(z \right)|
 = \left|\frac{\ell!}{2\pi i}\int_{|\omega-z|=R}\frac{\dbtilde{\varphi}\left(\omega\right)}{(\omega-z)^{\ell+1}}d\omega\right|
 \le \frac{\ell!\sup_{V_\rho(D_\infty)}|\dbtilde{\varphi}|}{R^\ell}.
$$
Since we have
$$\sup_{V_\rho(D_{\infty})}|\dbtilde{\varphi}|\le \sup_{V_\rho(D_{\infty})}|\varphi|\cdot \sup_{z\in \partial V_{\rho}(D_{\infty})}|z|^{n}\le \sup_{V_\rho(D_{\infty})}|\varphi|\cdot \sup_{z\in \partial D_{\infty}}|z|^{n},$$
we obtain
\begin{equation}\label{eq:jth derivative}
|\varphi^{(j)}(z)| 
\le c_{n,j}'\frac{j!\sup_{V_\rho(D_\infty)}|\varphi|}{R^{n+j}},\quad\text{where }
c_{n,j}' := \sup_{z\in \partial D_\infty}|z|^{n} \cdot \sum_{i=0}^j  {n+i-1 \choose i}.
\end{equation}

\begin{prop}\label{pr:Tkbound}
Let $\rho\geq0$. The following statements hold.
\begin{enumerate}[(i)]
\item \label{en: 1Tkbound}  For all $k\in \mathbb{N}$, there exists $C_{\rho, k}>0$ such that, for all $r\geq 0$ and $\psi\in\Occ{k+2}{[0,1]}$, we have
\begin{equation*}
\sup_{z\in V_\rho(D_\infty)}|((T_k^{(k+2)})^r\psi)(z)|\le C_{\rho,k}\left(\frac{\sqrt{5}-1}{2}\right)^{rk} \sup_{z\in V_\rho(D_\infty)}|\psi(z)|.
\end{equation*}
\item \label{pr:Tkbound-item2} For all $k\in \mathbb{N}$, there exists $\overline{C}_{\rho, k}$ such that, for all $r\geq 0$ and $\varphi\in\Occ{1}{[0,1]}$, we have
\begin{equation*}
\sup_{z\in V_\rho(D_\infty)}|(T_k^r\varphi)(z)|\le \overline{C}_{\rho, k}\left(\frac{\sqrt{5}-1}{2}\right)^{rk} \sup_{z\in V_\rho(D_\infty)}|\varphi(z)|.
\end{equation*}
\end{enumerate}
\end{prop}
Similarly to the last proposition we remark also here that analogous statements can immediately be deduced for $\wL_{g(m)}$ and $S$ by using the relations $\wL_{g(m)}=-L_{1,g(m)}$ and $S=-T$.

The proof of the above proposition is given in the Appendix.

In order to state the next proposition, we define
\begin{equation}
Z=\left\{\begin{pmatrix} 1 & n \\ 0& 1 \end{pmatrix} \in \GL_2(\Z) \; | \; n\in \Z\right\}.\label{eq: def Z}
\end{equation}
From now on, we consider $L_{k,g}$ and $\wL_g$ for $g\in\GL_2(\Z)$.
For $\varphi\in \Oc{1}$, since $\varphi_0^{(n)}\in \Oc{n+1}$,
we have $\lim_{c\to0}\varphi_0^{(n)}(-d/c)/c^n=0$ for $n\ge 0$.
Thus, we define $L_{k,g}$ when $c=0$ by $L_{k,g}\varphi(z) = \det(g)^{k+1}a^k\varphi\left(\frac{dz-b}{a}\right)$.
Especially, for $g\in Z$, we have
$$L_{k,\left(\begin{smallmatrix}1&n\\0&1\end{smallmatrix}\right)}\varphi(z) =\wL_{\left(\begin{smallmatrix}1&n\\0&1\end{smallmatrix}\right)}\varphi(z) = \varphi(z-n).$$

For $H\subset \mathrm{GL}_2(\Z)$, let us denote
$$\sum\nolimits_{H}^{(k+2)}\psi:=\sum_{g\in H}L_{g}^{(k+2)}\psi\quad\text{and}\quad
\sum\nolimits_{H,k}\varphi:=\sum_{g\in H}L_{k,g}\varphi,$$ 
which are uniformly summable on compact subsets of $\overline{\C}\setminus[0,1]$.

The following three propositions which are a generalization of \cite[Coro. 3.6]{MMY3} will show us a relation between $\sum_{\cM, k}$ and $T_k$.
They state that $\sum_{\cM, k}\varphi$ and $\sum_{Z\cdot \cM, k}\varphi$ are well-defined on any compact set of $\overline{\C}\setminus[0,1]$ for $\varphi\in \Oc{k+2}$ and $\sum_{\cM, k}$ is the inverse of $(1-T_k)$.

\begin{prop}\label{pr:Lg3sum}
 Let $\psi\in\Oc{k+2}$.
We have
\begin{equation}
\sum\nolimits_{\mathcal{M}}^{(k+2)}\psi=\sum_{r=0}^\infty (T_k^{(k+2)})^r\psi.\label{eq: in Lg3sum}
\end{equation}
Let $K\subset \overline{\C}\setminus[0,1]$ be compact. Then, for all $k\in\mathbb{N}$, there exist $\varepsilon(K)>0$ and $C_{K,k}$ such that
\begin{equation}
\sup_{K}\left|\sum\nolimits_{\mathcal{M}}^{(k+2)}\psi\right|\le C_{K,k}\sup_{U_{\varepsilon(K)}}|\psi|,\label{eq: in Lg3sum1}
\end{equation}
where $U_{\veps(K)}$ is as in \eqref{eq: def Ueps} with $\gamma_0=0$ and $\gamma_1=1$.

Furthermore, $\sum\nolimits_{Z\cdot\mathcal{M}}^{(k+2)}\psi$ is uniformly summable on all domains of the form $\{|\re(z)|<A, |\im(z)|\geq \delta\}$, for some $A,\delta>0$. 
It is holomorphic in $\C\setminus\R$, $\Z$-periodic, bounded in the neighbourhood of $\pm i\infty$, and
\begin{equation}
\sum\nolimits_{Z\cdot\mathcal{M}}^{(k+2)}\psi=\sum\nolimits_Z^{(k+2)}\sum\nolimits_{\mathcal{M}}^{(k+2)}\psi.\label{eq: in Lg3sum double sum}
\end{equation}
\end{prop}
\begin{proof}
Equation \eqref{eq: in Lg3sum} follows from \eqref{eq: proof Tkbound}, and equation \eqref{eq: in Lg3sum1} follows directly from Proposition~\ref{pr:Tkbound} taking $\veps$ and $\rho$ such that $K\subset V_\rho(D_\infty)\subset U_{\veps(K)}\subset \overline{\C}\setminus[0,1]$.

Furthermore, making use of \eqref{eq: Lkg deriv}, we have for $g' =\left(\begin{smallmatrix} 1 & n \\ 0 & 1  \end{smallmatrix}\right)\in Z$ that $L_{g'}^{(k+2)}\psi(z)=\psi\left(z-n\right)$.
Then, we have for all $g'\in Z$
and $g\in\mathcal{M}$ that $L_{g'\cdot g}^{(k+2)}\psi=L_{g'}^{(k+2)}\left(L_{g}^{(k+2)}\psi\right)$
 giving \eqref{eq: in Lg3sum double sum} in case that at least one side is uniformly summable which we will show in the following. 

 We have by Proposition~\ref{pr:Tkest} that $(T_k^{(k+2)})^r\psi \in\Occ{k+2}{[0,1]}$ for any $r\ge 0$. 
 Furthermore, by the above consideration, we have that $\sum\nolimits_Z^{(k+2)}\sum\nolimits_{\mathcal{M}}^{(k+2)}\psi(z)=\sum_{n=1}^{\infty}\left(\sum\nolimits_{\mathcal{M}}^{(k+2)}\psi\left(z-n\right)\right)$. Since $\sum_{n=1}^{\infty}f(z-n)\in \Occ{k+2}{[0,1]}$ holds if $f \in \Occ{k+2}{[0,1]}$, it follows that $\sum\nolimits_Z^{(k+2)}\sum\nolimits_{\mathcal{M}}^{(k+2)}\psi\in \Occ{k+2}{[0,1]}$.
\end{proof}

\begin{prop}\label{pr:Lgsum}
  Let $\varphi\in\Oc{1}$. 
We have
\begin{equation}
\sum\nolimits_{\mathcal{M},k}\varphi=\sum_{r=0}^\infty T_k^r\varphi.\label{eq: sum M sum T}
\end{equation}

Furthermore, $\sum\nolimits_{Z\cdot\mathcal{M},k}\varphi$ is holomorphic in $\C\setminus\R$, $\Z$-periodic, vanishing at $\pm i\infty$ and we have
\begin{align}\label{eq: M sum deriv} 
\left(\sum_{\mathcal{M}, k}\varphi\right)^{(k+1)}  =\sum\nolimits_{\mathcal{M}}^{(k+2)}\varphi^{(k+1)} \quad \text{ and } \quad
 \left(\sum_{Z\cdot\mathcal{M},k}\varphi\right)^{(k+1)}  =\sum\nolimits_{Z\cdot\mathcal{M}}^{(k+2)}\varphi^{(k+1)}.
\end{align}
\end{prop}
\begin{proof}
By definition of $\sum_{H,k}$, $\sum_{H}^{(k+2)}$ and \eqref{eq: Lkgvarphik+1}, the equations in  \eqref{eq: M sum deriv} hold.
Equation \eqref{eq: sum M sum T} follows from integrating the expression in Proposition~\ref{pr:Lg3sum} with $\psi = \varphi^{(k+1)}$, combining with \ref{pr:Tkest}-\eqref{en: 3} and the first equation of \eqref{eq: M sum deriv}.
The remaining statements follow immediately from the properties of $\varphi$ and the uniform summability.
\end{proof}

\begin{prop}\label{pr:inv}
 We have
\begin{align*}
 (1-T_k)\sum\nolimits_{\mathcal{M},k}&= \sum\nolimits_{\mathcal{M},k}(1-T_k)=\textnormal{id},\quad  \text{and }\\
 (1-T_k^{(k+2)})\sum\nolimits_{\mathcal{M}}^{(k+2)}&= \sum\nolimits_{\mathcal{M}}^{(k+2)} (1-T_k^{(k+2)})=\textnormal{id}. 
\end{align*}
\end{prop}
\begin{proof}
 It follows from Propositions~\ref{pr:Lg3sum} and \ref{pr:Lgsum}.
\end{proof}

For $H\subset \mathrm{GL}_2(\Z)$, 
We denote by 
\begin{equation}\label{eq:w def sum}
\wsum_H^{(3)}\psi := \sum_{g\in H}\wL_{g}^{(3)}\psi\quad \text{and} \quad
\wsum_H\varphi:=\sum_{g\in H}\wL_{g}\varphi,
\end{equation}
which are uniformly summable on compact subsets of $\C\setminus [0,1]$.
As an analog to the statements above and to \cite[Coro.~3.6]{MMY3}
we obtain the same statements as in Propositions \ref{pr:Lg3sum}, \ref{pr:Lgsum} and \ref{pr:inv}
with $k=1$ where we have to replace $T_1$ by $S$, $\sum$ by $\wsum$ and $L_{1,g}$ and $L_{1,g}^{(3)}$ by $\wL_{g}$ and $\wL_{g}^{(3)}$. In particular it shows that \eqref{eq:w def sum} is summable when $H=\cM$ or $H=Z\cdot \cM$.

\subsection{Boundary behaviour of the sums over the monoid $\sum_{\cM,k}\varphi$ and $\wsum_{\cM}\varphi$}

We will consider the boundary behaviour of $\sum_{\cM,k} \varphi$ for $\varphi \in \Oc{k}$.
The following proposition is a generalization of \cite[Prop. 4.1]{MMY3} and explains the behaviour of $\sum_{\cM,k} \varphi$ near $0$.

\begin{prop}\label{pr:decomp}
Let $-1 < \gamma_0 < \gamma_1$ and $I=[\gamma_0, \gamma_1]$.
Also let 
\begin{align}
 U = \{ z\in \C \;|\; \re(z) \in (-\infty,\gamma_0-1]\cup [\gamma_1+1,+\infty)\text{ or }\im(z) \notin (-1/2,1/2)\}.\label{eq: def U}
\end{align}
There exists $C_{I,k} > 0$ such that, for all $\varphi\in\mathcal{O}^{k}(\overline{\mathbb{C}}\backslash I)$ and all $z \in D_0\cup H_0\cup H_0'$, we have
\begin{equation}
\left|T_k\varphi(z)  - \sum_{m =1}^\infty \sum_{n=1}^k \frac{ z^{k-n}}{n!} \varphi^{(n)}(-m) \right| \le C_{I,k} |z|^k\log(1+|z|^{-1})\sup_U|\varphi|.\label{eq: decomp 1st eq}
\end{equation}
Moreover, there exists $C_k>0$ such that, for all $\varphi\in\Oc{k}$ and all $z \in D_0\cup H_0\cup H_0'$, we have
\begin{equation}\label{eq: decomp 2nd eq}
\left|\sum_{\mathcal{M},k}\varphi(z)-\varphi(z) 
 - \sum_{m =1}^\infty \sum_{n=1}^k \frac{ z^{k-n}}{n!} \left(\sum_{\mathcal{M}, k}\varphi\right)^{(n)}(-m) \right| \le C_k |z|^k\log(1+|z|^{-1})\sup_{D_\infty}|\varphi|.
\end{equation}
\end{prop}
We will give the proof of this proposition in much detail as it differs significantly from the case $k=1$.
\begin{proof}
As the proofs of \eqref{eq: decomp 1st eq} and \eqref{eq: decomp 2nd eq} are similar, we will first prove \eqref{eq: decomp 2nd eq} and later give the differences for the proof of \eqref{eq: decomp 1st eq}. 
For the following, we set 
$\widetilde{\varphi}= \sum_{\mathcal{M}, k}\varphi$. 
Then, Proposition~\ref{pr:inv} and Proposition~\ref{pr:Tkest}-\eqref{en: 1} imply
\begin{align*}
 \widetilde{\varphi}=\varphi+T_k\widetilde{\varphi}=\varphi+\sum_{m=1}^{\infty}L_{k,g(m)}\widetilde{\varphi}.
\end{align*}
By the above equation and \eqref{eq:Tk}, the expression of the left hand-side of \eqref{eq: decomp 2nd eq} can thus be summarized as 
\begin{align}
 \MoveEqLeft\left|\sum_{\mathcal{M},k}\varphi(z)-\varphi(z) 
- \sum_{m =1}^\infty \sum_{n=1}^k \frac{z^{k-n}}{n!} \left(\sum_\mathcal{M}\varphi\right)^{(n)}(-m) \right|\notag\\
&=\left|T_k\widetilde{\varphi}(z) - \sum_{m =1}^\infty \sum_{n=1}^k \frac{z^{k-n}}{n!} \widetilde{\varphi}^{(n)}(-m)\right|
=\left| \sum_{m=1}^{\infty}z^k\left(\widetilde{\varphi}\left(\frac{1}{z}-m\right)-\widetilde{\varphi}(-m)\right)\right|.\label{eq: left handside} 
\end{align}
We will split the sum in the following way:
\begin{align}
 \MoveEqLeft\left|\sum_{\mathcal{M},k}\varphi(z)-\varphi(z) 
 \sum_{m =1}^\infty \sum_{n=1}^k \frac{z^{k-n}}{n!} \left(\sum_{\mathcal{M},k}\varphi\right)^{(n)}(-m) \right|\notag\\
 &\le 
 \left| \sum_{m=1}^{\left\lfloor 3|z|^{-1}\right\rfloor+1}z^k\left(\widetilde{\varphi}\left(\frac{1}{z}-m\right)-\widetilde{\varphi}(-m)\right)\right|\label{eq: split sum 1}\\
 &\qquad+\left|\sum_{m=\left\lfloor 3|z|^{-1}\right\rfloor+2}^{\infty} \sum_{n=1}^k \frac{z^{k-n}}{n!} \widetilde{\varphi}^{(n)}(-m)\right|\label{eq: split sum 2}\\
  &\qquad + \left|  \sum_{m=\left\lfloor 3|z|^{-1}\right\rfloor+2}^{\infty}z^k\left(\widetilde{\varphi}\left(\frac{1}{z}-m\right)-\widetilde{\varphi}(-m)\right) 
 - \sum_{m =\left\lfloor 3|z|^{-1}\right\rfloor+2}^{\infty} \sum_{n=1}^k \frac{z^{k-n}}{n!} \widetilde{\varphi}^{(n)}(-m)\right|. \label{eq: split sum 3}
\end{align}

We first estimate the first summand \eqref{eq: split sum 1}.
We note that for $m\in\N_{\le \left\lfloor 3|z|^{-1}\right\rfloor+1}$ and $z\in D_0\cup H_0\cup H_0'$ we have that $-m\in D_\infty$ and $1/z-m\in D_{\infty}$.

Applying then \eqref{eq:O^k bound} with $\C\backslash \overline{V}=D_{\infty}$ yields
\begin{align*}
 \MoveEqLeft\left|\sum_{m=1}^{\left\lfloor 3|z|^{-1}\right\rfloor+1} z^k\left(\widetilde{\varphi}\left(\frac{1}{z}-m\right)-\widetilde{\varphi}(-m)\right)\right|\notag\\
 &\le \sum_{m=1}^{\left\lfloor 3|z|^{-1}\right\rfloor+1} c_{1,k} \left|z\right|^k\cdot \left(\frac{1}{m^k}+\left|-m+\frac{1}{z}\right|^{-k}\right)\cdot \sup_{D_{\infty}}\left|\widetilde{\varphi}\right|\notag\\
 &\le c_{2,k}\left|z\right|^k\cdot \left(\sum_{m=1}^{\left\lfloor 3|z|^{-1}\right\rfloor+1}  \left(\frac{1}{m}+\left|-m+\frac{1}{z}\right|^{-1}\right)\right) \cdot \sup_{D_{\infty}}\left|\widetilde{\varphi}\right|
\end{align*}
with $c_{2,k}=c_{1,k} \sup_{z\in D_0\cup H_0\cup H_0', m\in \N}\left|-m+\frac{1}{z}\right|^{-k+1}=c_{1,k}\, 2^{k-1}$.
Noting that 
for all $z\in D_0\cup H_0\cup H_0'$ we have
\begin{align*}
 \MoveEqLeft\sum_{m=1}^{\left\lfloor 3|z|^{-1}\right\rfloor+1}  \frac{1}{m} 
 \le 1+ \int_1^{\left\lfloor 3|z|^{-1}\right\rfloor+1} \frac{1}{m}\, \mathrm{d}m
 \le 1+ \log( \left\lfloor 3|z|^{-1}\right\rfloor+1),
\end{align*}
and since $\left|\frac{1}{z}-\left(\left\lfloor\re\left(\frac{1}{z}\right)\right\rfloor-n\right)\right| \ge \left|\frac{1}{z}-\left(\left\lfloor\re\left(\frac{1}{z}\right)\right\rfloor+n\right)\right|$ for $1\le n \le \lfloor \re(1/z)\rfloor$ and $\re(z)\ge0$,
we have
\begin{align*}
\sum_{m=1}^{\left\lfloor 3|z|^{-1}\right\rfloor +1} \left|m-\frac{1}{z}\right|^{-1} 
& = \sum_{\substack{\{ m\colon 1\le m\le \lfloor 3|z|^{-1}\rfloor+1, \\ \left|m- \lfloor \re(1/z)\rfloor\right|\le 2\}}} \left|m-\frac{1}{z}\right|^{-1}
+ \sum_{\substack{\{ m\colon 1\le m\le \lfloor 3|z|^{-1}\rfloor+1, \\ |m- \lfloor \re(1/z)\rfloor|> 2\}}} \left|m-\frac{1}{z}\right|^{-1} \\
&  \le 10 + 2 \sum_{m=\lfloor \re(1/z)\rfloor+3}^{\left\lfloor 3|z|^{-1}\right\rfloor+1}\frac{1}{m-\lfloor\re(1/z)\rfloor -1} \\
& \le 10 + 2 \int_{m=1}^{\left\lfloor 3|z|^{-1}\right\rfloor-\lfloor \re(1/z)\rfloor}\frac{1}{m}\,\mathrm{d}m 
 \le 10 + 2 \log(\left\lfloor 3|z|^{-1}\right\rfloor - \lfloor\re(1/z)\rfloor).  
\end{align*}
The last two inequalities even hold true if $\re(1/z)<0$.
The fact that $\lfloor 3|z|^{-1}\rfloor+1 \le (|z|^{-1}+1)^3$ implies that there exists another constant $c_{3,k}>0$ such that, for all $\widetilde{\varphi}\in \Oc{k}$ and $z\in D_0\cup H_0\cup H_0'$, we have
\begin{align}
\left|\sum_{m=1}^{\left\lfloor 3|z|^{-1}\right\rfloor+1}z^{k}\left(\widetilde{\varphi}\left(\frac{1}{z}-m\right)-\widetilde{\varphi}(-m)\right)\right|
 &\le c_{3,k} \left|z\right|^k\cdot\log \left(1+\left|z\right|^{-1}\right)\cdot \sup_{D_{\infty}}\left|\widetilde{\varphi}\right|.\label{eq: estim sum 2}
\end{align}

For the second summand \eqref{eq: split sum 2}, from \eqref{eq:jth derivative}, for all $\varphi\in \Oc{k}$ and $z\in D_0\cup H_0\cup H_0'$ we have
\begin{align*}
 \left|\sum_{m=\left\lfloor 3|z|^{-1}\right\rfloor+2}^{\infty} \sum_{n=1}^k \frac{ z^{k-n}}{n!} \widetilde{\varphi}^{(n)}(-m)\right|
 &\le \sum_{m=\left\lfloor 3|z|^{-1}\right\rfloor+2}^{\infty} \sum_{n=1}^k \frac{|z|^{k-n}}{n!}\cdot c_{k,n}'n!\cdot \frac{1}{m^{k+n}}\cdot \sup_{D_{\infty}}\left|\widetilde{\varphi}\right|\notag\\
 &\le \sum_{m=\left\lfloor 3|z|^{-1}\right\rfloor+2}^{\infty}  |z|^{k-1}\cdot 2^{k-1}c_{k,n}' \cdot \frac{1}{m^{k+1}}\cdot \sup_{D_{\infty}}\left|\widetilde{\varphi}\right|.
\end{align*} 
Noting that 
\begin{align}
 \sum_{m=\left\lfloor 3|z|^{-1}\right\rfloor+2}^{\infty} m^{-k-1}
 < \int_{\left\lfloor 3|z|^{-1}\right\rfloor+1}^{\infty}m^{-k-1}\mathrm{d}m 
 = {k^{-1}} \left(\left\lfloor 3|z|^{-1}\right\rfloor+1\right)^{-k}
 < \left(\frac{|z|}{3}\right)^{k}\label{eq: sum integral estim}
\end{align}
yields that for all $\varphi\in \Oc{k}$ and $z\in D_0\cup H_0\cup H_0'$ we have
\begin{align} 
 \left|\sum_{m=\left\lfloor 3|z|^{-1}\right\rfloor+2}^{\infty} \sum_{n=1}^k \frac{z^{k-n}}{n!}\, \widetilde{\varphi}^{(n)}(-m)\right|
 &\le  |z|^{2k-1}\cdot c_{k,n}'\cdot \sup_{D_{\infty}}\left|\widetilde{\varphi}\right|.\label{eq: estim sum 3}
\end{align}
Finally, we estimate the last summand \eqref{eq: split sum 3} using \eqref{eq: Lkg(m)phi1} 
\begin{align}
 \MoveEqLeft\left|\sum_{m=\left\lfloor 3|z|^{-1}\right\rfloor+2}^{\infty}z^{k}\left(\widetilde{\varphi}\left(\frac{1}{z}-m\right)-\widetilde{\varphi}(-m)\right)-\sum_{m = \left\lfloor 3|z|^{-1}\right\rfloor +2}^\infty \sum_{n=1}^k \frac{z^{k-n}}{n!} \widetilde{\varphi}^{(n)}(-m)\right|\notag\\
 &\le \sum_{m=\left\lfloor 3|z|^{-1}\right\rfloor+2}^{\infty}\left|\frac{1}{z\, k!}\int_{0}^{1}\widetilde{\varphi}^{(k+1)}\left(\frac{t}{z}-m\right)(1-t)^k\mathrm{d}t\right|.\label{eq: split sum 3 estim}
\end{align}
Using Cauchy's integral formula yields for $R$ sufficiently small (possibly depending on $m$ and $z$) to be defined later that 
\begin{align*}
 \MoveEqLeft\left|\int_{0}^{1}\widetilde{\varphi}^{(k+1)}\left(\frac{t}{z}-m\right)(1-t)^k\mathrm{d}t\right|\\
 &\le \left|\int_0^1 \frac{(k+1)!}{2\pi i}\int_{\left|\omega-\left(\frac{t}{z}-m\right)\right|=R}\frac{\widetilde{\varphi}\left(\omega\right)}{\left(\omega-\left(\frac{t}{z}-m\right)\right)^{k+2}}\mathrm{d}\omega\cdot (1-t)^k\mathrm{d}t\right|\\
 &\le \sup_{\left|\omega-\left(\frac{t}{z}-m\right)\right|=R} \big|\widetilde{\varphi}\left(\omega\right)\big|\cdot \frac{(k+1)!}{R^{k+2}}\int_0^1(1-t)^k\mathrm{d}t
 =\sup_{\left|\omega-\left(\frac{t}{z}-m\right)\right|=R}\big|\widetilde{\varphi}\left(\omega\right) \big|\cdot\frac{2\pi k!}{R^{k+1}}.
\end{align*}
Now, we specify the radius $R$. It is sufficient that 
$-m+t/|z|+R<\sqrt{3}/2-1$ is fulfilled, i.e.\ it is sufficient that 
$R< \sqrt{3}/2-1+m-1/|z|$ is fulfilled and thus we can choose $R= m/3$. 
Hence, using the above estimate and inserting it into \eqref{eq: split sum 3 estim} yields 
for all $\varphi\in \Oc{k}$
\begin{align}
\MoveEqLeft\left|\sum_{m= \left\lfloor 3|z|^{-1}\right\rfloor+2}^{\infty}z^k\left(\widetilde{\varphi}\left(\frac{1}{z}-m\right)-\widetilde{\varphi}(-m)\right)-\sum_{m =\left\lfloor 3|z|^{-1}\right\rfloor +2}^\infty \sum_{n=1}^k \frac{z^{k-n}}{n!} \widetilde{\varphi}^{(n)}(-m)\right|\notag\\
  &\le \sum_{m=\left\lfloor 3|z|^{-1}\right\rfloor+2}^{\infty} 2\pi\cdot 3^{k+1}  \cdot|z|^{-1}\cdot m^{-k-1}\cdot \sup_{\left|\omega-\left(\frac{t}{z}-m\right)\right|= m/3} \left|\widetilde{\varphi}\right|.\label{eq: split sum 3 estim sum}
\end{align}
Next, we further estimate $\sup_{\left|\omega-\left(\frac{t}{z}-m\right)\right|=m/3} \left|\widetilde{\varphi}\right|$. Using \eqref{eq:O^k bound} and noting that we have
$D_{z,m,t}:=\{\omega: \left|\omega-\left(\frac{t}{z}-m\right)\right|= m/3\}\subset D_{\infty}$
and thus $\inf_{\omega\in D_{z,m,t}}|\omega| \geq \left|\frac{t}{z}-m\right|-\frac{m}{3}\geq m- |z|^{-1}-\frac{m}{3}\geq \frac{m}{3}$ yields that there exists $c_{4,k}$ such that 
\begin{align}
 \sup_{D_{z,m,t}} \left|\widetilde{\varphi}\right|
 &\le c_{4,k}\,\sup_{D_{\infty}} \left|\widetilde{\varphi}\right|\cdot \sup_{D_{z,m,t}} \left|\omega\right|^{-k}
 \le c_{4,k}\,\sup_{D_{\infty}} \left|\widetilde{\varphi}\right|\cdot \left(\frac{m}{3}\right)^{-k}\label{eq: max princ1}
\end{align}
and for $m\geq \left\lfloor 3|z|^{-1}\right\rfloor+2$, we have 
\begin{align*}
 \sup_{D_{z,m,t}} \left|\widetilde{\varphi}\right|
 &\le c_{4,k} \,\sup_{D_{\infty}} \left|\widetilde{\varphi}\right|\cdot \left|z\right|^k.
\end{align*}
Combining this with \eqref{eq: split sum 3 estim sum} and \eqref{eq: sum integral estim} yields
\begin{align}
\MoveEqLeft\left|\sum_{m=\left\lfloor 3|z|^{-1}\right\rfloor+2}^{\infty}z^k\left(\widetilde{\varphi}\left(\frac{1}{z}-m\right)-\widetilde{\varphi}(-m)\right)-\sum_{m =1}^\infty \sum_{n=1}^k \frac{z^{k-n}}{n!} \widetilde{\varphi}^{(n)}(-m)\right|\notag\\
  &\le   2 \pi c_{4,k}\cdot 3^{k+1} \cdot|z|^{-1}\cdot \left(\frac{|z|}{3}\right)^{2k} \cdot \sup_{D_{\infty}} \left|\widetilde{\varphi}\right|
  \le  2\pi c_{4,k}\cdot |z|^{2k-1}\cdot \sup_{D_{\infty}}\left|\widetilde{\varphi}\right|.\label{eq: estim sum 1}
\end{align}

Combining the sum of \eqref{eq: split sum 1}, \eqref{eq: split sum 2}, and \eqref{eq: split sum 3} estimated in  \eqref{eq: estim sum 2}, \eqref{eq: estim sum 3}, and \eqref{eq: estim sum 1}, we obtain that there exists a constant $C_k>0$ such that for all $\varphi\in \Oc{k}$ and $z\in D_0\cup H_0\cup H_0'$ we have
the statement of \eqref{eq: decomp 2nd eq}.

The left handside of \eqref{eq: left handside} equals the left handside of \eqref{eq: decomp 1st eq} if we replace $\varphi$ by $\widetilde{\varphi}$. 
All the remaining calculations remain the same with the difference that we have to take the supremum over $U$ instead of $D_{\infty}$. However, there are only two instances where this is important. The first is the choice of $R$ which has to fulfill $R< \gamma_0+m-1/|z|$ instead of $R< \sqrt{3}/2-1+m-1/|z|$ and for both $R\le m/3$ is sufficient. The second is the occurrence of $\inf_{x\in D_{\infty}} \left|x\right|^{-k}$ in \eqref{eq: max princ1} which we were estimating by $c_{1,k}$ and which has to be changed to an estimate by a constant depending on $I$ and $k$.
\end{proof}

Our next concern will be the behaviour of $\sum_{\cM,k}\varphi$ for $\varphi\in\Oc{k}$ near $1$.
The following proposition is a generalization of \cite[Prop. 4.2]{MMY3}.

\begin{prop}\label{pr:decomp2}
There exists $C_k>0$ such that for all $\varphi\in\Oc{k}$ and all $z \in D_1$ we have
\begin{equation}
\left|T_k\varphi(z) + z^k \varphi\left(\frac{1}{z}-1\right) 
- \sum_{m =1}^\infty \sum_{n=1}^k \frac{z^{k-n}}{n!} \varphi^{(n)}(-m) \right| \le C_k |z-1|\sup_{D_\infty}|\varphi|.\label{eq: Tk on D1}
\end{equation}
\end{prop}

\begin{proof}
We first notice that \eqref{eq:Tk} implies
\begin{align}
\MoveEqLeft \left|T_k\varphi(z) + z^k\varphi\left(\frac{1}{z}-1\right) 
- \sum_{m =1}^\infty \sum_{n=1}^k \frac{z^{k-n}}{n!}\varphi^{(n)}(-m) \right|\notag\\
&=\left|\sum_{m=2}^{\infty} z^k\varphi\left(\frac{1}{z}-m\right)-   \sum_{m=1}^{\infty} z^k \varphi(-m)\right|\notag\\
&\le |z|^k\cdot \left|\sum_{m=2}^{\infty} \left(\varphi\left(\frac{1}{z}-m\right)-\varphi(1-m)\right)\right|.\label{eq: Tk on D1 estim}
\end{align}

For $z \in D_1$, we have $1/z - 1 \in D_0$. 
Equation \eqref{eq:jth derivative} implies that there exists a constant $c_{1,k}>0$ such that 
$|\varphi'(w)|
 \le c_{1,k} m^{-k-1}\sup_{D_{\infty}}|\varphi|$.
This implies 
\begin{align*}
 \MoveEqLeft \left|T_k\varphi(z)+  z^k\varphi\left(\frac{1}{z}-1\right) 
 - \sum_{m =1}^\infty \sum_{n=1}^k \frac{z^{k-n}}{n!}\varphi^{(n)}(-m) \right|\notag\\
& \le |z|^k\cdot \sum_{m=2}^{\infty} c_{1,k} m^{-k-1}\sup_{D_{\infty}}|\varphi|\cdot \left|\frac{1}{z}-1\right|
 = |z|^{k-1}\cdot \left|1-z\right|\sup_{D_{\infty}}|\varphi| \sum_{m=2}^{\infty} c_{1,k} m^{-k-1}\\
&= C_k \cdot \left|1-z\right|\sup_{D_{\infty}}|\varphi|,
\end{align*}
where $C_k \coloneqq \sup_{z\in D_1}|z|^{k- 1} c_{1,k} \sum_{m=2}^{\infty} m^{-k-1}$. 
\end{proof}

We can finally use the previous results and the complex continued fraction algorithm to study the behaviour of $T_k^n\varphi(z)$
when $z$ is close to the boundary. We recall that $D(m_1,\cdots, m_n)$ is the set of $z_0\in D$ whose first $n$ complex continued fraction 
entries equal $\{m_i\}_{i=1}^n$.
Next, we will prove a proposition which is an analog to \cite[Prop. 4.4]{MMY3}.
\begin{prop}\label{prop: main prop kBrjuno}
 For all $\varphi\in\Oc{k}$, $n\in\N$, and $z_0\in D(m_1,\cdots, m_n)$ we have
\begin{multline}\label{eq: Tkn varphi}
 T_k^n\varphi(z_0)= (-1)^{n(k+1)}(p_{n-1}-q_{n-1}z_0)^{k}(\varphi(z_n)+\varphi(z_n-1)+\varepsilon_n\varphi(z_n+1))\\
- (-1)^{(n-1)(k+1)} (p_{n-2}-q_{n-2}z_0)^{k}(1+z_{n-1})^{k}\varepsilon_{n-1}\varphi\left(-\frac{z_{n-1}}{1+z_{n-1}}\right)+R^{[n]}_{k}\varphi(z_0).
\end{multline}
The remainder term $R^{[n]}_k$ is holomorphic in the interior of $D(m_1,\cdots, m_n)$ and continuous in $D(m_1,\cdots, m_n)$ and there exists $C_k>0$ such that for all $\varphi\in\Oc{k}$, $n\in\N$, and $z_0\in D(m_1,\cdots, m_n)$  we have
\begin{equation}\label{eq:R bound}
\left|R^{[n]}_{k}\varphi(z_0)\right| \le C_k\,n \left(\frac{\sqrt{5}-1}{2}\right)^{nk}\sup_{D_\infty}|\varphi|.
\end{equation}
\end{prop}

\begin{proof}
We start by an estimate for $n=1$.
For $z_0 \in D(m_1)$, by definition, we have
\begin{align*}
  T_k\varphi(z_0)
  &=  - \sum_{m=1}^{\infty} z_0^k\left(\varphi\left(\frac{1}{z_0}-m\right)-\varphi(-m)\right) + \sum_{m=1}^{\infty}\sum_{j=1}^{k}\frac{z_0^{k-j}}{j!}\varphi^{(j)}(-m)\\
&= -z_0^k(\varphi(z_1)+\varphi(z_1-1)+\varepsilon_1\varphi(z_1+1)) + R_{k}^{(m_1)}(\varphi)(z_0),
\end{align*}
with
\begin{align*}
 R^{(m_1)}_{k}(\varphi)(z_0)
 &= \sum_{\substack{m\geq 1\\|m-m_1|\le 1}}z_0^k \varphi(-m) 
  + \sum_{m=1}^\infty \sum_{j=1}^{k}\frac{z_0^{k-j}}{j!}\varphi^{( j)}(-m)\\
&\qquad - \sum_{\substack{m\geq 1\\|m-m_1|> 1}}{ z_0^k}(\varphi(m_1+z_1-m)-\varphi(-m))\\
& =: I_0 + I_1 - I_2.
\end{align*}
Then, $R^{(m_1)}_{k}(\varphi)$ is holomorphic in a neighbourhood of $D(m_1)$.

We will first show that there is a constant $c_{1,k}$ such that 
\begin{equation}\label{eq:R^(m_1) bound}
|R^{(m_1)}_{k}(\varphi)(z_0)|\le {c_{1,k}}\sup_{D_\infty}|\varphi| \quad \text{ for all }z_0\in D(m_1).
\end{equation}
Note that this however is not an estimate of \eqref{eq: Tkn varphi} for $n=1$ as the terms in the second line are all contained in the error term $R^{(m_1)}_{k}(\varphi)(z_0)$.
We start by estimating $I_0$. 
Since  $|z_0|\le 1$ and $I_0 = z_0^k\big(\varphi(-m_1-1)+\varphi(-m_1)+\varepsilon_1\varphi(-m_1+1)\big)$,
we have 
\begin{equation}\label{eq: I0 estim}
 |I_0|\le 3 \sup_{D_\infty}|\varphi|.
\end{equation}
From \eqref{eq:jth derivative} with $\{\omega:|\omega+m|=m-0.8\}\subset D_\infty$, 
we have $$|\varphi^{(j)}(-m)|\le {c_{k,j}'}\frac{j!\sup_{D_\infty}|\varphi|}{(m-0.8)^{k+j}}.$$
Let ${c_{2,k}} = \sum_{m=1}^\infty \sum_{j=1}^k \frac{c_{k,j}'}{(m-0.8)^{k+j}}$.
Since 
$\sum_{m=1}^\infty\frac{1}{(m-0.8)^{k+j}}<\infty$ for $k,j\ge 1,$
$${c_{2,k}} =  \sum_{m=1}^\infty \sum_{j=1}^k \frac{c_{k,j}'}{(m-0.8)^{k+j}} =  \sum_{j=1}^k\sum_{m=1}^\infty \frac{c_{k,j}'}{(m-0.8)^{k+j}}< \infty.$$
Then we have
\begin{equation}\label{eq:I1}
|I_1| \le
c_{2,k} \sup_{D_\infty}|\varphi|.
\end{equation}

Since $z_0\in D(m_1)$, we have $1/z_0 = z_1+m_1\in \Delta+m_1$ (see \eqref{eq: def Delta} for the definition of $\Delta$).
Furthermore,
$\frac{1}{m_1+1}\le |z_0| \le \frac{1}{m_1-1}$ if $m_1\not =1$ and
$1/2\le |z_0| \le 1$ if $m_1=1$.
Let us consider
$$I_2' := \sum_{\substack{m \ge m_1 + 2}}z_0^k (\varphi(1/z_0-m)-\varphi(-m)),$$
and $I_2'' := I_2 - I_2'$. Then, $I_2'' = 0$ if $m_1\le 2$, and
$$I_2'' = \sum\limits_{1 \le m \le m_1- 2}z_0^k (\varphi(1/z_0-m)-\varphi(-m)) \quad \text{if }m_1\ge 3.$$
If $m\ge m_1+2$, then $\re(1/{z_0}-m)\le -1$. 
For $z$ in the segment $[-m, 1/{z_0}-m]\subset D_\infty$, the circle $\{\omega: |\omega-z|=m-0.8\}$ is contained in $D_\infty$.
Thus, by \eqref{eq:jth derivative}, we have 
$|\varphi'(z)|\le c_{k,1}' \frac{\sup_{D_\infty}|\varphi|}{(m-0.8)^{k+1}}$
and thus
$$|I_2'| \le \sum_{m\ge m_1+2}  \frac{c_{k,1}'}{(m-0.8)^{k+1}} \sup_{D_\infty}|\varphi|.$$
If $m_1\ge 3$, then $1/z_0-m\in \Delta+(m_1-m)\subset D_\infty$ for $1\le m \le m_1-2$. 
Thus, $$|I''_2|\le \frac{2(m_1-2)\sup_{D_\infty}|\varphi|}{(m_1-1)^k}\le 2\sup_{D_\infty}|\varphi|.$$
Then, $|I_2|\le {c_{3,k}}\sup_{D_\infty}|\varphi|$, where
${c_{3,k}} := \sum_{m=3}^\infty \frac{c_{k,1}'}{(m-0.8)^{k+1}} + 2.$
Hence, by combining the last estimate with \eqref{eq: I0 estim} and \eqref{eq:I1} and setting $c_{1,k} := c_{2,k}+c_{3,k}+3$, we obtain \eqref{eq:R^(m_1) bound}.
\bigskip

In the following we will estimate $T_k^n\varphi(z_0)$ itself which we split into a number of summands to be estimated separately.
Recall $z_i$ as in \eqref{eq:CF} and \eqref{eq:z_i}.
Iterating $T_k$ $n$ times, we get 
\begin{align*}
 T_k^n\varphi(z_0)&=(\varphi(z_n)+\varphi(z_n-1)
+\varepsilon_n\varphi(z_n+1))\prod_{i=0}^{n-1}(-z_i^k)\\
&\qquad +\sum_{j=1}^{n-1}T_k^{n-j}\varphi(z_j-1)\prod_{i=0}^{j-1}(-z_i^k)\\
&\qquad + \sum_{j=1}^{n-1}\varepsilon_jT_k^{n-j}\varphi(z_j+1)\prod_{i=0}^{j-1}(-z_i^k)\\
&\qquad+\sum_{j=1}^nR^{(m_j)}_{k}(T_k^{n-j}\varphi)(z_{j-1})\prod_{i=0}^{j-2}(-z_i^k)\\
& = : J_1+J_2+J_3+J_4,
\end{align*}
where $z_{-1}=1$. 
We recall \eqref{eq:prod z_i} and \eqref{eq:dirichlet}, i.e., $\prod_{i=0}^{n-1}(-z_i^k) =(-1)^{n(k+1)} (p_{n-1}-q_{n-1}z_0)^{k}$ and $|p_{n-1}-q_{n-1}z_0|<q_{n}^{-1}$.
Thus $J_1$ is the first term of \eqref{eq: Tkn varphi}. In the following steps (1)-(3) we estimate the summands $J_2$, $J_3$, and $J_4$.

\begin{enumerate}
\item We continue by estimating $J_4$.
By using \eqref{eq:prod z_i}, \eqref{eq:dirichlet} and \eqref{eq:R^(m_1) bound} and
by applying Proposition~\ref{pr:Tkbound}-\eqref{pr:Tkbound-item2} to each of the summands and noting that $q_j\geq \left(\frac{\sqrt{5}-1}{2}\right)^{-j}$, we obtain 
\begin{align}
\left|\sum_{j=1}^nR^{(m_j)}_{k}(T_k^{n-j}\varphi)(z_{j-1})\prod_{i=0}^{j-2}(-z_i^k)\right|
&\le \sum_{j=1}^n \frac{c_{1,k} \sup_{D_\infty} |T_k^{n-j}\varphi|}{q_{j-1}^k} \notag\\
& \le c_{1,k} \sum_{j=1}^n\overline{C}_{0,k}\left(\frac{\sqrt{5}-1}{2}\right)^{(n-1)k}\sup_{D_\infty} |\varphi| \notag\\
& \le  c_{4,k} \cdot n \left(\frac{\sqrt{5}-1}{2}\right)^{nk}\sup_{D_\infty} |\varphi|\label{eq:bdd}
\end{align}
with 
$ c_{4,k} = c_{1,k} \overline{C}_{0,k}\left( \frac{\sqrt{5}+1}{2}\right)^{k}.$

\item
In the next steps, we will show that $|J_2|\le c_{5,k} n(\frac{\sqrt{5}-1}{2})^{nk}\sup_{D_\infty}|\varphi|$ for some $c_{5,k}$.
If $z_j-1\not\in  D_0$ for $1\le j < n$, then we have
\begin{equation}\label{eq:bddz-1}
|T_k^{n-j}\varphi(z_j-1)| \le C_{K,k} \sup_{D_\infty} |(T_k^{n-j-1}\varphi)|
\end{equation}
by Proposition~\ref{pr:Tkest}-\eqref{en: 1} with $I=[0,1]$ and 
$K = \overline{(D - 1) \setminus D_0}$. 

On the other hand, let us consider the case of $z_j-1\in D_0$. 
From \eqref{eq: decomp 1st eq}, we deduce that
\begin{align*}
|T_k^{n-j}\varphi(z_j-1)| & \le C_{I,k} |z_j-1|^k\log(1+|z_j-1|^{-1}) \sup_{D_\infty} |(T_k^{n-j-1}\varphi)| \\
		&+\left|\sum_{m =1}^\infty \sum_{i=1}^k \frac{(z_j-1)^{k-i}}{i!} (T_k^{n-j-1}\varphi)^{(i)}(-m)\right|.
\end{align*}
From \eqref{eq:I1} and $|z_j-1|^k\log(1+|z_j-1|^{-1}) \le \log 2,$ we see that
\begin{equation}\label{eq:bddz-1_2}
|T_k^{n-j}\varphi(z_j-1)| \le (C_{I,k} \log2+c_{2,k})\sup_{D_\infty}|(T_k^{n-j-1}\varphi)|.
\end{equation} 
Let $c_{6,k}= \max\{C_{K, k}, C_{I,k}\log2+c_{2,k}\}$.
Then, by combining the calculations leading to \eqref{eq:bdd}, \eqref{eq:bddz-1} and \eqref{eq:bddz-1_2},
we get
\begin{align*}
\left|\sum_{j=1}^{n-1}T_k^{n-j}\varphi(z_j-1)\prod_{i=0}^{j-1}(-z_i^k)\right| 
 \le c_{5,k}\, n \left(\frac{\sqrt{5}-1}{2}\right)^{nk}\sup_{D_\infty} |\varphi|,
\end{align*}
where $c_{5,k} = \overline{C}_{0,k}\, c_{6,k} \left(\frac{\sqrt{5}+1}{2}\right)^k$.

\item
To complete the proof, it is enough to show that there is a constant $c_{7,k}$ such that
\begin{align}\label{eq:bddz+1_2}
& \Big|J_3 + \prod_{i=0}^{n-2}(-z_i^k)(1+z_{n-1})^k\varepsilon_{n-1}\varphi \Big(-\frac{z_{n-1}}{1+z_{n-1}}\Big)\Big|  \le c_{7,k}\, n \left(\frac{\sqrt{5}-1}{2}\right)^{nk}\sup_{D_\infty} |\varphi|.
\end{align}
We have
\begin{align*}
\MoveEqLeft J_3  + \prod_{i=0}^{n-2} (-z_i^k)(1+z_{n-1})^k\varepsilon_{n-1}\varphi \Big(-\frac{z_{n-1}}{1+z_{n-1}} \Big) \\
& = \sum_{j=1}^{n-2}\veps_j T_k^{n-j}\varphi(z_j+1)\prod_{i=0}^{j-1} (-z_i^k) \\
& \qquad + \left(T_k\varphi(z_{n-1}+1) + (1+z_{n-1})^k\varphi\left(-\frac{z_{n-1}}{1+z_{n-1}}\right)\right)\prod_{i=0}^{n-2}(-z_i^k) \\
& =: I_4 + I _5.
\end{align*}
\begin{enumerate}[(3a)]
\item We will estimate $I_4$.
If $m_{j+1}$ is small enough so that $z_j+1 \not\in D_1$, then by Proposition~\ref{pr:Tkest}-\eqref{en: 1} applied to $I=[0,1]$ and 
$K' = \overline{(D+1)\setminus D_1}$ we have
$$|T_k^{n-j}\varphi(z_j+1)| \le C_{K',k} \sup_{D_\infty} |(T_k^{n-j-1}\varphi)|.$$

If $m_{j+1}$ is large so that $z_j+1 \in D_1$, then by Proposition~\ref{pr:decomp2} and \eqref{eq:I1}, we deduce that 
\begin{align*}
 & |T_k^{n-j}\varphi(z_j+1)| \\
 & \le |1+z_{j}|^{k}\left|(T_k^{n-j-1}\varphi)\left(-\frac{z_{j}}{1+z_{j}}\right)\right|+ C_k |z_{j}|\sup_{D_\infty}|T_k^{n-j-1}\varphi| \\ 
  &\qquad+\left|\sum_{m =1}^\infty \sum_{i=1}^k \frac{ (z_j+1)^{k-i}}{i!}(T_k^{n-j-1}\varphi)^{(i)}(-m)\right| \\
  & \le |1+z_j|^k\left|(T_k^{n-j-1}\varphi)\left(-\frac{z_j}{1+z_j}\right)\right|+(C_k+ c_{2,k} (2\sqrt{3}/3)^k)\sup_{D\infty}|T_k^{n-j-1}\varphi|.
\end{align*}
Since $-\frac{z_j}{1+z_j}\in D_0$ and $j\le n-2$, by Proposition~\ref{pr:decomp}, we have
\begin{align*}
 \MoveEqLeft|1+z_j|^k\left|(T_k^{n-j-1}\varphi)\left(-\frac{z_j}{1+z_j}\right)\right|\\
& \le C_{I,k} |1+z_j|^k \left|-\frac{z_j}{1+z_j}\right|^k\log\left(1+\left|-\frac{z_j}{1+z_j}\right|^{-1}\right)\sup_{D_\infty}|T_k^{n-j-2}\varphi| \\
& \qquad + |1+z_j|^k\left|\sum_{m =1}^\infty \sum_{i=1}^k \frac{(-z_j/(1+z_j))^{k-i}}{i!}(T_k^{n-j-2}\varphi)^{(i)}(-m)\right|.
\end{align*}
Note that
\begin{align*}
 \MoveEqLeft|1+z_j|^k \left|-\frac{z_j}{1+z_j}\right|^k\log\left(1+\left|-\frac{z_j}{1+z_j}\right|^{-1}\right)\\&= |z_j|^k\log\left(1+\left|1+\frac{1}{z_j}\right|\right)  \le |z_j|^k \log\left(2+\frac{1}{|z_j|}\right).
\end{align*}
Since $x^k\log(2+1/x)\to 0$ as $x\to 0$, it is increasing and $|z_j|<1$, it follows that $|z_j|^k\log(2+1/|z_j|)\le \log 3$.
Thus, by a similar way to the proof of \eqref{eq:bddz-1_2}, it follows that
$$ |1+z_j|^k\left|(T_k^{n-j-1}\varphi)\left(-\frac{z_j}{1+z_j}\right)\right| 
 \le \left(C_{I,k} \log3 + c_{2,k}\left({2\sqrt{3}}/{3}\right)^k\right) \sup_{D_\infty}|(T_k^{n-j-2}\varphi)|.
$$ 
By letting $c_{8,k} = \max\big\{C_{K',k},~C_k+c_{2,k}\left(2\sqrt{3}/3\right)^k, C_{I,k}\log 3 + c_{2,k}(2\sqrt{3}/3)^k\big\}$, we have
\begin{equation}\label{eq:bddI4}\begin{split}
& \left| I_4\right| 
 \le c_{8,k} \sum_{j=1}^{n-2} \frac{\sup_{D_\infty}|T_k^{n-j-1}\varphi| + \sup_{D_\infty}|T_k^{n-j-2}\varphi|}{q_{j}^k}.
\end{split}\end{equation}

\item
We will estimate $I_5$.
If $z_{n-1}+1\not\in D_1$, then 
$\re\left(\frac{1}{(z_{n-1}+1)}-1\right)<\frac{\sqrt{3}}{2}-1$.
Thus, $\frac{1}{(z_{n-1}+1)}-1 \in D_\infty$.
By Proposition~\ref{pr:Tkest}-\eqref{en: 1}, we have
$$\left|T_k\varphi(z_{n-1}+1)  + (1+z_{n-1})^{k}\varphi\left(-\frac{z_{n-1}}{1+z_{n-1}}\right)\right|
\le (C_{K',k}+2^k)\sup_{D_\infty}|\varphi|.$$
By Proposition~\ref{pr:decomp2} in the same manner as showing \eqref{eq:I1}, we obtain that $z_{n-1}+1\in D_1$ implies
\begin{align*}
\MoveEqLeft \left|T_k\varphi(z_{n-1}+1)  + (1+z_{n-1})^{k}\varphi\left(-\frac{z_{n-1}}{1+z_{n-1}}\right)\right| \\
& \le C_k |z_{n-1}|\sup_{D_\infty}|\varphi|+\left|\sum_{m =1}^\infty \sum_{i=1}^k \frac{(z_{n-1}+1)^{k-i}}{i!} \varphi^{(i)}(-m)\right|  \\
& \le \big(C_k + c_{2,k}(2\sqrt{3}/3)^k \big) \sup_{D_\infty}|\varphi|.
\end{align*}
Let $ c_{9,k} =\max\{C_{K',k}+2^k,~ C_k+c_{2,k}(2\sqrt{3}/3)^k\}$.
Then, we have
\begin{equation}\label{eq:bddI5}
\left| I_5 \right|
\le \frac{c_{9,k} \sup_{D_\infty}|\varphi|}{q_{n-1}^k}.
\end{equation}
\end{enumerate}
Letting $c_{10,k} = \max\{c_{8,k},~ c_{9,k}\}$, from \eqref{eq:bddI4}, \eqref{eq:bddI5} and \eqref{eq:bdd},
we obtain 
$$|I_4|+|I_5| \le 2 c_{10,k}\sum_{j=0}^{n-1} \frac{\sup_{D_\infty}|T_k^{n-j-1}\varphi|}{q_{j-1}^k} \le c_{7,k} n\left(\frac{\sqrt{5}-1}{2}\right)^{nk}\sup_{D_\infty}|\varphi|,$$
where $c_{7,k} = 2 c_{10,k} \overline{C}_{0,k}(\frac{\sqrt{5}+1}{2})^{2k}$.
Note that $c_{7,k}$ depends only on $k$.
\end{enumerate}
\end{proof}

The next proposition is an analog to \cite[Prop.~4.1]{MMY3}.
We can show the following proposition in the same manner of the proof in \cite{MMY3} by using \eqref{eq:Lgint} and Proposition~\ref{pr:Tkbound}-(2) for $k=1$ and the fact that $S=-T$.
\begin{prop}\label{prop:4.1}
The following two statements hold true:
\begin{enumerate}[(i)]
\item Let $I=[\gamma_0,\gamma_1]$, $\gamma_0>-1$. There exists $C_I>0$ such that for all $\varphi\in\Occ{1}{I}$ and for all $z\in D_0\cup H_0 \cup H_0'$, one has
\begin{equation}
|S\varphi(z)  + \sum_{m\ge 1}\varphi'(-m)|\le C_I |z|\log(1+|z|^{-1})\sup_U|\varphi|,
\end{equation}
where $U$ is as in \eqref{eq: def U}.
\item There exists $C>0$ such that for all $\varphi\in \Oc{1}$ and for all $z\in D_0\cup H_0\cup H_0'$, one has
\begin{equation}
|\wsuml{\mathcal{M}}\varphi(z)-\varphi(z)  + \sum_{m\ge 1}(\wsuml{\mathcal{M}}\varphi)'(-m)|\le C|z|(1+\log|z|^{-1})\sup_{D_\infty}|\varphi|.
\end{equation}
\end{enumerate}
\end{prop}

Concerning the boundary behaviour of the sum leading to the complex Wilton function, we simply observe that since $S=-T$
Propositions 4.1 and 4.4 of \cite{MMY3} hold. The same is true for Proposition 4.11 of \cite{MMY3}, concerning the action of $T=-S$ 
on the space of analytic functions on $\overline{\C}\setminus [0,1]$ with bounded real part. However the Wilton function will not have a bounded real part we thus omit the discussion of it.

\section{Complex $k$-Brjuno function and complex Wilton function}\label{sec: compl functions}

In this section we finally define the complex $k$-Brjuno function and the complex Wilton function.
Let 
\begin{equation}
\varphi_0(z):=-\frac{1}{\pi}\Li{\frac{1}{z}}
\quad\text{ with }\quad 
\Li{z}:=\sum_{n=1}^{\infty}\frac{z^n}{n^2}.\label{eq: def varphi0}
\end{equation}
Note that $\varphi_0\in \Oc{1}$, $\im \varphi_{0}(x)=0$ if $x\in \R\setminus[0,1]$ and
\begin{equation}\label{eq:Imphi0}
\im \varphi_0 (x\pm i0)=\pm\log\left(\frac{1}{x}\right)\quad \text{for}~ x\in(0,1].
\end{equation}

We define $\tau := \big(\begin{smallmatrix}1&1\\0&1\end{smallmatrix}\big)$,
\begin{equation*}\label{eq: def phi1k}
\varphi_{1, k} := \left(L_{k,g(1)}+L_{k,\tau}\right)\varphi_0 \quad \text{ and } \quad
\wvarphi_1 := \left(\wL_{g(1)} + \wL_{\tau}\right)\varphi_0.
\end{equation*}
Note that $\wvarphi_1 = (-L_{1,g(1)}+L_{1,\tau})\varphi_0$.
Further, we note that $\varphi_0\in\Oc{1}$ implies $\varphi_{1,k},~  \wvarphi_1 \in \Occ{1}{[1/2,2]}$. 

Similarly as in \cite[Section 5]{MMY3} we note that from \eqref{eq: sum M sum T}
we have that $\sum_{\mathcal{M},k}\varphi_{ 1, k}= \varphi_{1,k}+\sum_{\mathcal{M},k}T_k\varphi_{ 1, k}$. 
Since
$Z\cM = Z\cM g(1) \sqcup Z\cM\tau,$
we have
\begin{equation}\label{eq: varphi 1k in terms of MZ}
 \sum_{Z\mathcal{M},k}\varphi_0(z)=\sum_{Z,k}[\varphi_{1, k}(z)+\sum_{\mathcal{M},k}(T_k\varphi_{1, k}(z))] = \sum_{Z\cM,k}\varphi_{1,k}(z).
\end{equation}
Similarly, we have
$\wsuml{Z\cM}\varphi_0(z)=\wsuml{Z\cM} \wvarphi_1(z)$. 
We then deduce by simply applying the definitions of $L_{k,g}$ and $\wL_g$
\begin{align*}
 \varphi_{1, k}(z)=& -z^k \varphi_0\left(\frac{1}{z}-1\right) + \varphi_0(z-1) + \sum_{n=0}^k\frac{z^{k-n}}{n!}\varphi_0^{(n)}(-1)
 \end{align*}
and
\begin{equation}\label{eq:wphi_1}
\wvarphi_1(z) = z\left(\varphi_0\left(\frac1z-1\right)-\varphi_0(-1)\right)  + \varphi_0(z-1) -\varphi_0'(-1).
\end{equation}
We deduce that
\begin{equation}\label{eq:defphi1}
 \varphi_{1,k}(z)
= \frac{1}{\pi}\left[{z^k}\Li{\frac{z}{1-z}} - \Li{\frac{1}{z-1}}\right] +\sum_{n=0}^k\frac{z^{k-n}}{n!}\varphi_0^{(n)}(-1),
\end{equation}
and since we additionally have $\varphi_0(-1) = \frac{\pi}{12}$ and $\varphi_0'(-1) = \frac{\log 2}{\pi}$, we also have
\begin{equation}\label{eq:wphi_1 Li}
\wvarphi_1(z) = -\frac{1}{\pi}\left[z\Li{\frac{z}{1-z}}+\Li{\frac{1}{z-1}}\right]-z\,\frac{\pi}{12}-\frac{\log 2}{\pi}.
\end{equation}

By \eqref{eq:Imphi0}, we have
\begin{equation}\label{eq:Im phi1,k on [1/2,2]}
\im\varphi_{1,k}(x \pm i0) = 
\begin{cases}
\pm x^k\log\frac{x}{1-x},&\text{if~}1/2\le x <1, \\
\pm \log\frac{1}{x-1},&\text{if~}1<x\le2
\end{cases}
\end{equation}
and
\begin{equation}\label{eq:Imphi1}
\im\wvarphi_1(x \pm i0)=
\begin{cases}
\mp x\log\frac{x}{1-x} & \text{if}~1/2\le x<1,\\
\pm \log\frac1{x-1} & \text{if}~1<x\le 2.
\end{cases}
\end{equation}

Since $T_k$ is defined exactly in such a way that $T_k\varphi(x)=x^k\, \varphi(1/x)$ as an operator on the space of real functions and the $k$-Brjuno function fulfills the recursion $B_k(x)=-\log x+x^k\, B_k(1/x)$, we have $\left(1-T_k\right)B_k)(x)=-\log x$ and
therefore
\begin{equation}\label{eq:cx brjuno}
 B_k(x)=((1-T_k)^{-1}f)(x) \quad \textnormal{ with } \quad f(x)=\sum_{n\in\Z}\im \varphi_0(x+i0-n).
\end{equation}
We note here that by the above considerations $f(x)=\sum_{n\in\Z}\im \varphi_0(x+i0-n)=\log \{x\}$.

Then, the complex analytic extension of the real $k$-Brjuno function is $(1-T_k)^{-1}\left(\sum_{Z}\varphi_0\right)$ and 
we have $(1-T_k)^{-1}\left(\sum_{Z}\varphi_0\right) = \sum_{Z\cM,k}\varphi_0$ by applying \eqref{eq:(1-Tk)^{-1}}, here $T_k$ is the extended operator as in \eqref{eq:Tk}.
Hence, it is natural to define \emph{the complex $k$-Brjuno function} by 
$$\mathcal{B}_k := \sum_{Z\mathcal{M},k}\varphi_0.$$

By applying the above consideration to $W$ instead of $B_k$ and $S$ instead of $T_k$, 
we may define the complex analytic extension of $W$ by 
\begin{equation}\label{eq:cx w def}
\cW := \wsuml{Z\mathcal{M}}\varphi_0,
\end{equation}
which we call \emph{the complex Wilton function}.

There is a two-to-one correspondence between $Z\cM$ and $\Q$.
From \eqref{eq:Lkg} and \eqref{eq:Lg}, we obtain analogous formulations for $\cB_k$ and $\cW$ respectively such that 
\begin{align}
\cB_k(z) 
& = - \sum_{p/q\in\Q} \det\begin{pmatrix}p'&p\\q'&q\end{pmatrix}^{k+1}\notag\\
&\qquad\Bigg\{\frac{1}{\pi}
\left[ (p'-q'z)^k \Li{\frac{p'-q'z}{qz-p}} - (q''z-p'')^k\Li{\frac{p''-q''z}{qz-p}}\right]\notag\\
&\qquad\quad  + \sum_{n=0}^k \frac{1}{n!}\,\det\begin{pmatrix}p'&p\\q'&q\end{pmatrix}^{n} \notag\\
&\qquad \qquad \quad\Bigg[\frac{(p'-q'z)^{k-n}}{(q')^n}\,\varphi_0^{(n)}\left(-\frac{q}{q'}\right) - \frac{ (q''z-p'')^{k-n}}{(q'')^n}\,\varphi_0^{(n)}\left(-\frac{q}{q''}\right)\Bigg] \Bigg\},\label{eq: complex Brjuno calculation}
\end{align}
where $[\frac{p'}{q'},\frac{p''}{q''}]$ is the Farey interval such that $\frac{p}{q}=\frac{p'+p''}{q'+q''}$ (with the convention $p'=p-1$, $q'=1$, $p''=1$, $q''=0$ if $q=1$).
To obtain the representation as in \eqref{eq: complex kBrjuno} we remember the definition of $\varphi_0$ from \eqref{eq: def varphi0} and note that 
$$\varphi_{0}'(z) = - \frac{1}{\pi} \cdot \frac{1}{z} \log\left(1-\frac{1}{z}\right).$$

If we let $f(z) = \frac{1}{z}$ and $g(z) = \log\left(1-\frac{1}{z}\right)$, then $\varphi_0'(z) = -\frac{1}{\pi}f(z)g(z)$ and 
$$f^{(j)}(z) = \frac{(-1)^j j!}{z^{j+1}} \text{ for }j\ge 0,$$
and
$$g^{(j)}(z) = \frac{(-1)^{j-1}(j-1)!}{z^j}\left(\left(\frac{z}{z-1}\right)^j-1\right) \text{ for } j\ge 1$$
holds.
Since
$$(f g)^{(n)} = \sum_{i=0}^n {n\choose i} f^{(n-i)}g^{(i)},$$
we have, for $n\ge 1$, 
\begin{align*}
\varphi_0^{(n)} (z)
& = -\frac{1}{\pi}\sum_{i=0}^{n-1} {n-1 \choose i}f^{(n-1-i)}g^{(i)} \\
& = -\frac{1}{\pi}\Bigg[  \frac{(-1)^{n-1}(n-1)!}{z^n} \log\left(1-\frac{1}{z}\right) \\
& \qquad\qquad + \sum_{i=1}^{n-1}\frac{(n-1)!}{(n-i-1)! i!}\,\frac{(-1)^{n-1-i}(n-1-i)!}{z^{n-i}}\,\frac{(-1)^{i-1}(i-1)!}{z^i}\notag\\
&\qquad \qquad\qquad\cdot\left(\left(\frac{z}{z-1}\right)^i-1\right)\Bigg] \\
& = -\frac{1}{\pi}\Bigg[  \frac{(-1)^{n-1}(n-1)!}{z^n} \log\left(1-\frac{1}{z}\right) 
+ \frac{(-1)^{n}(n-1)!}{z^n}\sum_{i=1}^{n-1}\frac{1}{i}\left(\left(\frac{z}{z-1}\right)^i-1\right)\Bigg] \\
& = \frac{(-1)^{n}(n-1)!}{\pi z^n} \Bigg[  \log\left(1-\frac{1}{z}\right) 
-\sum_{i=1}^{n-1}\frac{1}{i}\left(\left(\frac{z}{z-1}\right)^i-1\right)\Bigg]
\end{align*}
and by \eqref{eq: complex Brjuno calculation}
we obtain \eqref{eq: complex kBrjuno}.
For the complex Wilton function, we obtain by the same considerations as above the representation as in \eqref{eq: complex Wilton}.

\subsection{Behaviour of $\mathcal{B}_k$ and $\cW$ at rational points}
\subsubsection{Behaviour of $\mathcal{B}_k$ at rational points}
Let
\begin{equation*}
R_k(z) := \sum_{n=0}^k\frac{z^{k-n}}{n!}\varphi_0^{(n)}(-1), 
\end{equation*}
which coincides with the last term in \eqref{eq:defphi1}.
Note that $R_k(1)\in\R$ is well defined for all $k\in\N$, as $\varphi_0 \in C^\infty$. 

\begin{lem}\label{lem:odd}
The function $\varphi_{1,k}{(z)}+i\log(1-z)$ is continuous on $\overline{\uhp} = \uhp \cup \mathbb{R}\cup\{\infty\}$
and its value at $1$ is $R_k(1)+\frac{\pi}{2}$.  
\end{lem}
\begin{proof}
 Recall that for $t\in\C\setminus[0,+\infty)$ we have Euler's functional equation
\begin{equation}\label{eq:Li2}
 \Li{t}+\Li{\frac{1}{t}}=-\frac{1}{2}(\log(-t))^2-\frac{\pi^2}{6}.
\end{equation}
If $z\notin [0,+\infty)$, then $\frac{z}{1-z}, \frac{1}{z-1} \notin [0,+\infty)$. Thus, applying \eqref{eq:Li2} with $t=\frac{z}{1-z}$ and $t=\frac{1}{z-1}$, and substituting it into \eqref{eq:defphi1} gives
\begin{align*}
 \varphi_{1,k}(z)=&
 R_k(z)- z^k\frac{\pi}{6}  + \frac{\pi}{6}
-\frac{1}{\pi}\left(z^k\Li{\frac{1-z}{z}} - \Li{z-1} \right)\\
&-\frac{1}{2\pi}\left( z^k\log^2\left(\frac{- z}{1-z}\right) - \log^2\left(\frac{1}{1-z}\right) \right).
\end{align*}

The function $z^k\Li{\frac{1-z}{z}} - \Li{z-1}$ is continuous on $\overline{\uhp}$ and it is in $O(|z-1|)$ in the neighbourhood of $1$. We then observe that
\begin{align*}
\MoveEqLeft   z^k\log^2\left(\frac{-z}{1-z}\right) - \log^2\left(\frac{1}{1-z}\right)\\
& =( z^k-1)\log^2\left(\frac{-z}{1-z}\right)  +\log^2\left(\frac{-z}{1-z}\right)  -\log^2\left(\frac{1}{1-z}\right),
\end{align*}
and $( z^k-1)\log^2\left(\frac{-z}{1-z}\right) = O\big(|1-z|\log\big(\frac{1}{|1-z|}\big)\big)$ in the neighbourhood of $1$. 
Also, we have
\begin{align*}
 \log^2\left(\frac{- z}{1-z}\right) - \log^2\left(\frac{1}{1-z}\right)
& =  \left(\log(- z)+\log\left(\frac{1}{1-z}\right)\right)^2 - \log^2\left(\frac{1}{1-z}\right) \\
& =  \log^2(- z) + 2\log(- z)\log\left(\frac{1}{1-z}\right).
\end{align*}
In the neighbourhood of $1$ in $\overline{\uhp}$, we have $\log(-z)+i\pi=O(|z-1|)$. Thus,
\begin{equation*}
 \log^2\left(\frac{-z}{1-z}\right) - \log^2\left(\frac{1}{1-z}\right)
=-\pi^2 - 2i\pi\log\left(\frac{1}{1-z}\right)+O\left(|z-1|\, \log\left(|1-z|\right)\right).
\end{equation*}

Therefore, in the neighbourhood of $1$, we have
\begin{equation}\label{eq:varphi 1,k near 1}
\varphi_{1,k}(z)=
R_k(z) + \frac{\pi}{2}
+i\log\left(\frac{1}{1-z}\right)
+O\left(|z-1|\log\left(|1-z|\right)\right)
\end{equation}
and the result follows.
\end{proof}

\begin{cor}\label{cor:odd}
 The real part of $\varphi_{1,k}$ is bounded in $\overline{\C}\setminus\left[1/2,2\right]$. It has an extension to a continuous function on $\overline{\C}\setminus\{1\}$ and
\begin{equation*}
 \lim_{x\to 1^\pm} \re(\varphi_{1,k}(x))=R_k(1)\mp\frac{\pi}{2}.
\end{equation*}
\end{cor}
\begin{proof}
 By \eqref{eq:varphi 1,k near 1}, we have
\begin{align*}
  \lim_{x\to 1^\pm} \re(\varphi_{1,k}(x))&=R_k(1)+\frac{\pi}{2}+\lim_{x\to 1^\pm} \re\left(i\log\left(\frac{1}{1- x}\right)\right)\\
&=R_k(1)+\frac{\pi}{2}-\lim_{x\to 1^\pm} \arg\left(\frac{1}{1- x}\right).
\end{align*}
The result follows from the fact that 
\begin{align*}
&\lim_{x\to 1^+} \arg\left(\frac{1}{1- x}\right)=\pi
\quad\textnormal{ and } \quad
\lim_{x\to 1^-} \arg\left(\frac{1}{1- x}\right)=0.
\end{align*}
\end{proof}

\begin{thm}\label{thm:odd}
The real part of the complex $k$-Brjuno function has a decreasing jump of $\frac{\pi}{q^k}$ at each rational number $\frac{p}{q}$. 
\end{thm}
\begin{proof}
 The proof follows similar arguments as \cite[Section 5.2] {MMY3}.
The space $\widehat{{\uhp}/\mathbb{Z}}$ is defined by
$$\widehat{{\uhp}/\mathbb{Z}}=\uhp/\mathbb{Z} \sqcup(\mathbb{R}\backslash\mathbb{Q})/\mathbb{Z}\sqcup\left(\overline{\mathbb{Q}/\mathbb{Z}}\times\left[-\frac\pi 2, +\frac\pi 2\right]\right),$$
where $\overline{\mathbb{Q}/\mathbb{Z}}=\mathbb{Q}/\mathbb{Z}\cup\{\infty\}$.
The space corresponds to a compactification of $(\overline{\uhp}\backslash\mathbb{Q})/\mathbb{Z}$ by attaching a semicircle on each $q\in\mathbb{Q}\cup\{\infty\}$.
Then the value of a function on $\overline{\mathbb{Q}/\mathbb{Z}}\times[-\pi/2,\pi/2]$ is defined by
$$\varphi(\alpha,\theta) := \lim_{{\substack{z\to\alpha \\ z\in \ell}}}\varphi(z),$$
where $\ell$ is a ray emitting from $\alpha$ and the angle from $i\R$ to $\ell$ in the clockwise direction is $\theta$.
 
We note that the topology induced by $\widehat{\uhp/\mathbb{Z}}$
is the same as the topology induced by $\uhp/\mathbb{Z}\sqcup (\mathbb{R}\backslash\mathbb{Q})/\mathbb{Z}$.
This implies the continuity of $\mathrm{Re}\sum_{Z\mathcal{M},k} \varphi_0$ on $\widehat{{\uhp}/\mathbb{Z}}$
and hence the real part $\mathrm{Re}\sum_{Z\mathcal{M},k} \varphi_0$ of the complex $k$-Brjuno
function is continuous on ${\uhp}/\mathbb{Z}\sqcup (\mathbb{R}\backslash\mathbb{Q})/\mathbb{Z}$ in the usual sense. 
For $(\alpha_0,\pi/2)\in\overline{\mathbb{Q}/\mathbb{Z}}\times[-\pi/2,\pi/2]$, the value $\mathrm{Re}\sum_{Z\mathcal{M},k}\varphi_0 (\alpha_0 , \pi/2)$
(resp.\
$(\alpha_0, -\pi/2)$), with $\alpha_0 \in \mathbb{Q}/\mathbb{Z}$, is the right (resp. left) limit of $\mathrm{Re}\sum_{Z\mathcal{M},k}\varphi_0 (\alpha)$, as
$\alpha\in (\mathbb{R} \backslash \mathbb{Q})/\mathbb{Z}$ tends to $\alpha_0$.
Recalling Corollary \ref{cor:odd}, one has
\begin{align*}
 \mathrm{Re} \varphi_{1,k} (1, \pi/2) - \mathrm{Re} \varphi_{1,k} (1, -\pi/2) = -\pi
\end{align*}
and more precisely, 
by a similar argument as in the proof of Corollary~\ref{cor:odd}, from \eqref{eq:varphi 1,k near 1}, we have
\begin{align*}
 \MoveEqLeft\re\varphi_{1,k}(1,\theta)-\re\varphi_{1,k}(1,0)\\
 &= - \lim_{t\to 0^+}\text{arg}\left(\frac{1}{1-(1+t\mathrm{e}^{i( \pi/2 - \theta)})}\right)
  + \lim_{t\to 0^+}\text{arg}\left(\frac{1}{1-(1+t\mathrm{e}^{i(\pi/2)})}\right)\\
 &= - \lim_{t \to 0^+}\text{arg}\left(t^{ -1} \mathrm{e}^{i( \pi/2 + \theta)}\right)
  + \lim_{t\to 0^+}\text{arg}\left(t^{ -1}\mathrm{e}^{i(\pi/2)}\right)=-\theta.
\end{align*}

If $\alpha_0\in\mathbb{Q}$, $\alpha_0\neq 1$, then
$\mathrm{Re}\varphi_{1,k} (\alpha_0, \theta) = \mathrm{Re} \varphi_{1,k} (\alpha_0 , 0)$
for all $\theta\in[-\pi/2, \pi/2]$. Thus, by \eqref{eq: varphi 1k in terms of MZ} one obtains that for all $p/q \in \mathbb{Q}$, $(p \wedge q = 1)$
\begin{align}\label{eq:jump}
\re\sum_{Z\mathcal{M}, k}\varphi_0(p/q, \theta) =  \mathrm{Re} \sum_{Z\mathcal{M}, k} \varphi_0 (p/q, 0) - \theta/{q^k}.
\end{align}
To show \eqref{eq:jump}, we use the fact that
\begin{align}
 \re(L_{k,g}\varphi(\alpha,\theta))
& = \det(g)^{k+1}(a-c\alpha)^k\left[\re\varphi\left(\frac{d\alpha-b}{a-c\alpha}, \det(g)\theta\right)-\varphi\left(-\frac dc\right)\right]\notag\\
&\qquad-\sum_{n=1}^k\frac{\det(g)^{k-n+1} (a-c\alpha)^{k-n}}{c^nn!}\varphi^{(n)}\left(-\frac dc\right).\label{eq: Re Lkg}
\end{align}
To verify this formula we note that for any $m\in\N$ we have
\begin{align*}
 \MoveEqLeft\lim_{t\to 0+}\arg\left(\begin{pmatrix} 0&1\\1&m\end{pmatrix} .\left(\alpha+ t\cdot e^{i(\pi/2-\theta)}\right)
 -\begin{pmatrix} 0&1\\1&m\end{pmatrix} .\alpha\right)\\
 &=\lim_{t\to 0+}\arg\left(-\frac{t\cdot e^{i(\pi/2-\theta)}}{(\alpha+m)(\alpha+m+t\cdot e^{i(\pi/2-\theta))}}\right)
 =-\theta-\frac{\pi}{2}.
\end{align*}
(This is the ray we get after applying the M\"obius transform.)
Now, if $\varphi$ is analytic on the upper half plane and real on the real line, we can apply Schwarz' reflection principle and obtain 
\begin{align*}
 \re \varphi(\alpha, \theta)
 &= \lim_{t\to 0+} \re \varphi(\alpha+ t\cdot e^{i(\pi/2-\theta)})
 = \lim_{t\to 0+} \re \overline{\varphi}(\alpha+ t\cdot e^{-i(\pi/2-\theta)})\\
 &= \lim_{t\to 0+} \re \varphi(\alpha+ t\cdot e^{-i(\pi/2-\theta)}).
\end{align*}
Hence, if $\psi(z)=\varphi(g(m).z)$, then
$\re \psi(\alpha, \theta)= \re\varphi(g(m).\alpha,-\theta)$ and generally if 
$\psi(z)=\varphi(g.z)$ with $g\in\cM$, then
$\re \psi(\alpha, \theta)= \re\varphi(g.\alpha,\det(g)\theta)$.

Since every element of $\mathcal{M}$ can be written as a product of matrices $g(m)$, \eqref{eq: Re Lkg} follows.

By \eqref{eq: varphi 1k in terms of MZ}, we have
\begin{align*}
\MoveEqLeft \re\sum_{\mathcal{ZM}, k}\varphi_0(\alpha,\theta) - \re\sum_{\mathcal{ZM}, k}\varphi_0(\alpha,0)  \\
& = \sum_{n\in\mathbb{Z},g\in\mathcal{M}}\re \left[L_{k,g}\varphi_{1,k}(\alpha-n,\theta) - L_{k,g}\varphi_{1,k}(\alpha-n,0) \right]. 
\end{align*}
By Corollary~\ref{cor:odd}, the real part $\re\big[L_{k,g}\varphi_{1,k}(\alpha-n,\theta)-L_{k,g}\varphi_{1,k}(\alpha-n,0)\big]$ is $0$ if and only if $ \frac{d(\alpha-n)-b}{a-c(\alpha-n)} \not = 1$ and by \eqref{eq: Re Lkg}, 
$$\re\big[L_{k,g}\varphi_{1,k}(\alpha-n,\theta)-L_{k,g}\varphi_{1,k}(\alpha-n,0)\big]
= - \left( \det(g)(a-c(\alpha-n))\right)^k\theta,
$$
if $\frac{d(\alpha-n)-b}{a-c(\alpha-n)}= 1$.
There exist unique $n\in\mathbb{N}$ and $g =\big(\begin{smallmatrix}a&b\\c&d \end{smallmatrix}\big) \in\mathcal{M}$ such that
$\frac{d(\alpha-n)-b}{a-c(\alpha-n)}= 1$, i.e. $\alpha = n  + \frac{a+b}{c+d}.$
If $\alpha=p/q$, then $c+d=q$. Thus, 
$$\re\sum_{\mathcal{ZM}}\varphi_0(p/q,\theta) - \re\sum_{\mathcal{ZM}}\varphi_0(p/q,0) 
= - \frac{\text{det}(g)^{2k}}{(c+d)^k}\,\theta = -\frac\theta{q^k}. $$
Thus, $\re(\cB_k)$ has at each rational $p/q\in\mathbb{Q}/\mathbb{Z}$ a decreasing jump of $\pi/q^k$.
\end{proof}

\subsubsection{A comment on the Wilton function}

It would be interesting to get to know the behaviour of the Wilton function at rational points. 
However, the results can not easily be transferred to the Wilton function. 
By a similar argument as in the proof of Lemma~\ref{lem:odd}, we will get from the next lemma that 
$\wvarphi_1(z)+i\log(1-z)-\frac{1}{2\pi}(z+1)\log^2\left(1-\frac{1}{z}\right)$ is continuous on $\overline{\uhp}$
and in a neighbourhood of $1$ it behaves like 
$$
 \overline{\varphi}_{1}(z)=\frac{3\pi}{4}-\frac{\log{2}}{\pi}-i\log\left(1-z\right)+\frac{1}{2\pi}(z+1)\log^2\left(1-\frac{1}{z}\right)+O\left(|1-z|\log\left(|1-z|\right)\right).
$$
Thus, we do not have an analog of Corollary~\ref{cor:odd} since $\re\big(\log^2\big(1-\frac{1}{z}\big)\big)$ does not go to a finite value for $z\to 1$.

\subsection{Behaviour of the imaginary part of $\cW$}\label{sec:Wilton}
In this section, we will explain the behaviour of the imaginary part of the Wilton function near the real axis, distinguishing rational points, Wilton numbers and Diophantine numbers. Many arguments are slight adaptations of those given in \cite{MMY3}, thus they will only be sketched and many proofs will be given in the Appendix. 

The same behaviour which we observe for the imaginary part of Wilton function will also characterize the approach to the real axis of the imaginary part of the $k$-Brjuno function, but we will limit ourselves to prove the results for the former, being the arguments 
very similar.

The following lemma is an analog to \cite[Lem.~5.2]{MMY3} and Lemma \ref{lem:odd}. However, here we are considering the function $\wvarphi_1(z)+i\log(1-z)-\frac{1}{2\pi}(z+1)\log^2\left(1-\frac{1}{z}\right)$ instead of 
$\varphi_1(z)+i\log(1-z)$ in \cite{MMY3}. We will later remark how this further influences the proof of Theorem \ref{thm:ImWilton}.
\begin{lem}\label{lem:lem5.2}
The function $\wvarphi_1(z)+i\log(1-z)-\frac{1}{2\pi}(z+1)\log^2\left(1-\frac{1}{z}\right)$ is continuous on $\overline{\uhp}$ and its value at $1$ is $\frac{3\pi}{4}-\frac{\log{2}}{\pi}$.
\end{lem}

\begin{proof}
By combining \eqref{eq:wphi_1 Li} and Euler's functional equation \eqref{eq:Li2}
it follows that
\begin{equation}\label{eq:Lem6.4}
\begin{split}
 \wvarphi_1(z)
  = & \frac{1}{\pi}\left[z\Li{\frac{1-z}{z}}+\Li{z-1}\right] + \frac{1}{2\pi}\left[z\log^2\left(\frac{1-z}{-z}\right)+\log^2(-z+1)\right] \\
 & + \frac{\pi}{12}z+\frac{\pi}{6} - \frac{\log 2}{\pi}.
\end{split}
\end{equation}
The function $z\Li{\frac{1-z}{z}}+\Li{z-1}$ is regular and vanishing at $z=1$.
We have
$$z\log^2\left(\frac{1-z}{-z}\right)+\log^2(-z+1) = (z+1)\log^2\left(\frac{1-z}{-z}\right)+\log^2(-z+1)-\log^2\left(\frac{1-z}{-z}\right).$$
As in the proof of \cite[Lem.~5.2]{MMY3}, it follows that
$$\log^2(-z+1)-\log^2\left(\frac{1-z}{-z}\right) = \pi^2 - 2\pi i \log(1-z) + O\left(|z-1|\log \left(|z-1|\right)\right),$$
which completes the proof.
\end{proof}

From now on, we write $f(z) \lesssim g(z)$ if there exists a constant $C>0$ such that $f(z) \le C g(z)$ for all $z$.
The following is an analog to \cite[Thm.~5.10]{MMY3}.
\begin{thm}\label{thm:ImWilton}
For $n\ge 0$, $m_1,\cdots,m_n\ge 1$ and $z_0\in H(m_1,\cdots,m_n)$,
we have
\begin{align*}
\im\mathcal{W}(z_0)=& - W_{\textnormal{finite}}\left(\frac{p_n}{q_n}\right)+(-1)^{n}(p_{n-1}-q_{n-1}\re(z_0))\im\wvarphi_1(z_n+1)\\
&+r_n(z_0),
\end{align*}
with $|r_{n}(z_0)|\lesssim \frac{\log q_n}{q_n}|z_n|\log^{2}(1+|z_n|^{-1}).$
\end{thm}

As the proof is very similar to the one in \cite[Thm.~5.11]{MMY3}, it is given in the Appendix. However, the summand and the remainder term differ slightly from those in \cite{MMY3} about which we will comment in the following.

\begin{rmk}\label{rmk:rmk5.18}
Here we give a remark of the summand $(-1)^n(p_{n-1}-q_{n-1}\re(z_0))\im\wvarphi_1(z_n+1)$ of Theorem~\ref{thm:ImWilton}.
We have
$$(-1)^n(p_{n-1}-q_{n-1}\re(z_0))\im\wvarphi_1(z_n+1) =  \left(q_n^{-1}\log\frac{1}{|z_n|}\right)\left( 1+\frac{2}{\pi}\text{Arg}(1+1/z_n) + o(1) \right).$$
It can be derived as follows.

By \eqref{eq:dirichlet2}, we have
$$q_n(p_{n-1}-q_{n-1}\re(z_0)) = (-1)^n \re\left(\frac{q_n}{q_{n-1}z_n+q_n}\right) \to (-1)^n \quad \text{as }z_n\to 0.$$
By $\re(\log^2(w)) = \log^2|w|-(\text{Arg}(w))^2$ and $\im(\log^2(x)) = 2\log|w|\text{Arg}(w)$, we have
\begin{align*}
\im\left((z_n+2)\log^2\left(1+\frac{1}{z_n}\right)\right) = & 2(\re(z_n)+2)\text{Arg}(1+1/z_n)(\log|z_n+1|+\log|z_n|^{-1}) \\
&+ \im(z_n)(\log^2|1+1/z_n|-(\text{Arg}(1+1/z_n))^2).
\end{align*}
In the above equation, $2(\re(z_n)+2)\text{Arg}(1+1/z_n)\log|z_n+1|-\im(z_n)(\text{Arg}(1+1/z_n))^2$ is uniformly bounded.
Since $|\im(z_n)\log^2|1+1/z_n|\le |z_n|\log^2(1+1/|z_n|)\to 0$ as $z_n\to 0$, 
from Lemma~\ref{lem:lem5.2},
we have
\begin{align*}
 \frac{\im\wvarphi_1(z_n+1)}{\left(\log\frac{1}{|z_n|}\right)} 
& =\frac{-\log|z_n| + \frac{1}{2\pi}2(\re(z_n)+2)\text{Arg}(1+1/z_n)\log\left|z_n\right|^{-1}}{\log |z_n|^{-1}} + o(1) \\
& =1+\frac{2}{\pi}\text{Arg}(1+1/z_n) + o(1) = 1 + O(1),
\end{align*}
but not in $1+o(1)$.

Finally, we remark here on the estimate of the remainder term in Theorem~\ref{thm:ImWilton} which differs from the remainder term in \cite[Thm.~5.10]{MMY3} as we have an additional factor of $\log q_n\cdot \log (1+|z_n|^{-1})$. 
It is caused by the signs of the terms $z\varphi_0(1/z-1)$ and $\varphi_0(z-1)$ in the definition of $\wvarphi_1$ in \eqref{eq:wphi_1}.  The functions $\varphi_0(1/z-1)$ and $\varphi_0(z-1)$ are not continuous at $1$ because they can be written as a sum of a function continuous near $1$ and  a constant multiple of $\log^2(1-z)$ (see the proof of Lemma~\ref{lem:lem5.2}). Contrary to that in the case of the Brjuno function, the non-continuous parts cancel out (see \cite[Lem.~5.2]{MMY3}).
\end{rmk}

Finally, we will show an analog of \cite[Thm.~5.19]{MMY3}.
We will give a sufficient condition for $W(x)$ to be approximated by the imaginary part of $\cW(z)$ if $z$ approaches $x$. 
For $R>0$ and $0 < {r} < 1/2$, let 
$$U_{R}=\{u\in\uhp:~ \im(u) \ge |\re (u)|^{R}\}\quad\text{and}\quad
\widetilde{U}_{r} = \left\{u\in\uhp: ~\im(u) \ge \exp{\left( -\frac{1}{|\re(u)|^{r}} \right)} \right\}.$$
\begin{thm}\label{thm: ImWilton1} 
We have the following statements:
\begin{enumerate}[(i)]
\item For any Wilton number $x$ and any $R>0$, we have
$$\lim_{u\to 0,~ u\in U_{R}} \im\cW(u+x) = - W(x).$$

\item Let $x$ be an irrational Diophantine number and $0 < r < 1/2$ such that 
$$\liminf_{q\to\infty} \|q x \|_{\Z}~q^{1/r-1} = \infty,$$
where $\|\cdot\|_\Z$ denotes the distance from the nearest integer. Then,
$$\lim_{u\to 0,~u\in \widetilde{U}_{r}} \im\cW(u+x) = - W(x).$$
\end{enumerate}
\end{thm}
Since the singular behaviour of the k-Brjuno function as the real line is approached is tamer than the singular behaviour of the Brjuno function, we expect that the same properties proven in \cite{MMY3} and stated in the theorem above for the Wilton function hold by a modest adaptation of the proofs given there. 

\begin{proof}[Proof of Theorem \ref{thm: ImWilton1}]
Let $x\in(0,1)$ be an irrational number whose continued fraction expansion is
$$x = \cfrac{1}{m_1+\cfrac{1}{m_2+\cfrac{1}{\ddots +\cfrac{1}{m_n +\ddots}}}}.$$
Let $x_\ell = \Gauss^\ell(x)$.
Let $\{p_n/q_n\}_{n\ge 0}$ be the sequence of the partial quotients of $x$.
Let $u$ be a point near $0$ such that $|\im(u)|\le 1/2$ and $z = u + x \in \Delta$.
Then there are $N_1 \cdots, N_L$ such that  $z\in H(N_1,\cdots,N_L)$.
Let $p/q$ be a rational number such that 
$$p/q=\frac{1}{N_1+\cfrac{1}{N_2+ \cfrac{1}{\ddots +\cfrac{1}{N_L}}}}.$$
We distinguish two cases.

\begin{enumerate}
\item[(I)] 
Assume that $p/q = p_L/q_L$, i.e., $m_i = N_i$ for $i=1,\cdots, L$.
From Theorem~\ref{thm:ImWilton}, Remark~\ref{rmk:rmk5.18} and Lemma~\ref{lem:lem5.20}, 
there exist uniform constants $c_1,c_2,c_3>0$ such that
\begin{equation}\label{eq:imW on w nb}\begin{split}
& |\im\cW(u+x) + W(x)| \\
& =\Big|-W_{\mathrm{finite}}\left(\frac{p_L}{q_L}\right) + \sum_{\ell=0}^{L-1}(-1)^\ell \beta_{\ell-1}(x)\log\frac{1}{x_\ell} + \sum_{\ell=L }^\infty (-1)^\ell \beta_{\ell-1}(x)\log\frac{1}{x_\ell} \\
& \qquad +(-1)^n(p_{L-1}-q_{L-1}\re(z_0))\im\wvarphi_1(z_L+1) +r_n(z_0)  \Big| \\
& \le c_1x_L q_n^{-1} + c_2q_{L}^{-1}\log|z_L|^{-1} + c_3 q_L^{-1} \log{(q_L)} |z_L|\log^2(1+|z_L|^{-1}) \\
& \qquad  + \Big|\sum_{\ell=L}^\infty (-1)^\ell \beta_{\ell-1}(x)\log\frac{1}{x_\ell}\Big|.
\end{split}\end{equation}
This case follows in an analogous way to the proof of \cite[Thm.~5.19(I)]{MMY3} where we note that the considered terms also stay small if we consider a Wilton instead of a Brjuno number.

\item[(II)]
Assume that $p/q$ is not one of the partial quotients of $x$.
We denote by $\{p_\ell'/q_\ell'\}_{0\le \ell \le L}$ the partial quotient of $p/q$ and by $n$ the largest integer such that $p_n'/q_n'= p_n/q_n$.
Clearly, one has $n<L$ and $p_L'/q_L' = p/q$.

Note that $q_n^{-1}\log(q_n)\ge (q_L')^{-1}\log(q_L')$ if $q_n\ge 3$.
As in the proof of \cite[Thm.~5.19]{MMY3}, from the fact $|x-p/q|\ge (2q^2)^{-1}$,
if $u\in U_R$, then there exists a uniform constant $c>0$ such that
$$q^{-1}\log|z_L|^{-1}\le q^{-1}(c+(2R-2)\log q) \le q_n^{-1}(c+(2R-2)\log q_n)$$ and if $u \in \widetilde{U_r}$, then we have
$$q^{-1}\log|z_L|^{-1}\le q^{-1}cq^{2r}\le q_n^{-1}cq_n^{2r}.$$
From Theorem~\ref{thm:ImWilton}, Remark~\ref{rmk:rmk5.18} and Lemma~\ref{lem:lem5.20}, there exist uniform constants $c_1,c_2,C_1,C_2,C_3>0$ such that
\begin{equation*}\begin{split}
& |\im\cW(u+x) + W(x)| \\
& =\Big|-W_{\mathrm{finite}}\left(\frac{p_L'}{q_L'}\right) + \sum_{\ell=0}^{n-1}(-1)^\ell \beta_{\ell-1}(x)\log\frac{1}{x_\ell} + \sum_{\ell=n}^\infty (-1)^\ell \beta_{\ell-1}(x)\log\frac{1}{x_\ell} \\
& \qquad +(-1)^L(p_{L-1}'-q_{L-1}'\re(z_0))\im\wvarphi_1(z_L+1) +r_L(z_0)  \Big| \\
& \le \Big|-W_{\mathrm{finite}}\left(\frac{p_L'}{q_L'}\right) + W_{\mathrm{finite}}\left(\frac{p_n}{q_n}\right)\Big| + \Big| -W_{\mathrm{finite}}\left(\frac{p_n}{q_n}\right) + \sum_{\ell=0}^{n-1}(-1)^\ell \beta_{\ell-1}(x)\log\frac{1}{x_\ell} \Big| \\
& \qquad + \Big|\sum_{\ell=n}^\infty (-1)^\ell \beta_{\ell-1}(x)\log\frac{1}{x_\ell}\Big| + c_1\frac{\log|z_L|^{-1}}{q_{L}'} + c_2 \frac{\log q_L'}{q_L'} |z_L|\log^2(1+|z_L|^{-1}) \\
& \le \Big|-W_{\mathrm{finite}}\left(\frac{p_L'}{q_L'}\right) + W_{\mathrm{finite}}\left(\frac{p_n}{q_n}\right)\Big| + \Big|\sum_{\ell=n}^\infty (-1)^\ell \beta_{\ell-1}(x)\log\frac{1}{x_\ell}\Big| \\
& \qquad + C_3 \frac{x_n}{q_n} + C_1 \frac{1}{q_n^{\gamma}} + C_2 \frac{\log q_n}{q_n}
\end{split}\end{equation*}
since $|z_L|\log^2 (1+|z_L|^{-1})$ are bounded, where $\gamma = 1$ for the case of $u\in U_R$ and $\gamma = 1-2r$ for the case of $u\in \widetilde{U}_r$.
Thus, it is enough to show that $W_\mathrm{finite}(p/q)$ is close to $W_\mathrm{finite}(p_n/q_n)$.

Let 
$$\rho = \max_{n\le \ell < L} (q_\ell')^{-1}\log\frac{q_{\ell+1}'}{q_\ell'}(\ell-n+1)^2$$
By Lemma~\ref{lem:lem5.20}, we have
\begin{align*}
& |W_{\mathrm{finite}}(p_n/q_n) - W_{\mathrm{finite}}(p/q)| \\
& \le \left|W_{\mathrm{finite}}(p_n/q_n) - \sum_{\ell=0}^{n-1}(-1)^\ell \beta_{\ell-1}(x)\log\frac{1}{x_\ell}  + \sum_{\ell=0}^{L-1}(-1)^\ell \beta_{\ell-1}(x)\log\frac{1}{x_\ell}  - W_{\mathrm{finite}}(p/q)|\right| \\
& \le C x_nq_n^{-1} + C x_L (q_L')^{-1} + \left|\sum_{\ell=n}^{L-1}(-1)^\ell \beta_{\ell-1}(x)\log\frac{1}{x_\ell}\right|
\le C' q_n^{-1} + \left|\sum_{\ell=n}^{L-1}(-1)^\ell \beta_{\ell-1}(x)\log\frac{1}{x_\ell}\right|.
\end{align*}
We have$$
\sum_{\ell=n}^{L-1} \beta_{\ell-1}(x)\log\frac{1}{x_\ell}\lesssim \sum_{\ell=n}^{L-1}\frac{1}{q_\ell'}\log\frac{q_{\ell+1}'}{q_\ell'} \le \rho \sum_{\ell=r}^{L-1}\frac{1}{(\ell-n+1)^2}\lesssim \rho.
$$
With the same argument as in the proof of \cite[Thm.~5.19]{MMY3} we have the conclusion.
\end{enumerate}
\end{proof}

\appendix
\section{Proof of Proposition \ref{pr:Tkbound}} 

\begin{proof}
From the fact that $\mathcal{M}$ is the free monoid generated by the matrices $g(m)$, we have that
$z\in D_\infty$ and $g\in \mathcal{M}$ imply $g^{-1}. z\in D_\infty$. 
Also, by this fact combined with \cite[Lem.~3.2]{MMY3}, we then have that 
\begin{equation}\label{eq:g preserves V_rho}
z\in V_\rho(D_\infty) \text{ implies  } g^{-1}. z\in V_\rho(D_\infty).
\end{equation}

Let $\mathcal{M}^{(r)}=\{g\in\mathcal{M}\;|\;g=g(m_1)\cdots g(m_r), ~ m_i\in \N\}$. Then we have for all $\psi\in\Occ{k+2}{I}$ that
\begin{equation}
(T_k^{(k+2)})^r\psi=\sum_{g\in\mathcal{M}^{(r)}}L^{(k+2)}_{g}\psi=\sum_{g'\in\mathcal{M}^{(r-1)}}L^{(k+2)}_{g'}(T_k^{(k+2)}\psi).\label{eq: proof Tkbound}
\end{equation}
Let $g'=\left(\begin{smallmatrix} a'& b'\\ c'&d'\end{smallmatrix}\right)$, $z\in V_\rho(D_\infty)$ and $z'=(g')^{-1}. z$. 
Then for all $m\in\N$, we have
\begin{equation*}
g'g(m)=\begin{pmatrix} b'& a'+mb'\\ d'&c'+md'\end{pmatrix}.
\end{equation*}
Thus, by using the definition of $T_k^{(k+2)}$ given in Proposition \ref{pr:Tkest} together with 
\eqref{eq: Lkg deriv} and \eqref{eq: Lkg(m) deriv} we obtain
\begin{align*}
L^{(k+2)}_{g'}\left(T_k^{(k+2)}\psi\right)
&=L^{(k+2)}_{g'}\left(\sum_{m=1}^\infty L_{g(m)}^{(k+2)}\psi\right)
=L^{(k+2)}_{g'}\left(\sum_{m=1}^\infty -z^{-(k+2)}\psi\left(\frac{1}{z}-m\right)\right)\\
&=\det(g)^{k+1}\left(a'-c'z\right)^{-(k+2)} \left(-\sum_{m=1}^\infty \left(\frac{a'-c'z}{d'z-b'}\right)^{k+2}\psi\left(\frac{1}{z'}-m\right)\right)\\
&=-\det(g)^{k+1}(d'z-b')^{-(k+2)}\sum_{m=1}^\infty \psi\left(\frac{1}{z'}-m\right). 
\end{align*}
We remark that if $r=1$, then we consider $g' \in \cM^{(0)} = \{\text{id}\}$ and the following arguments work well.

It follows from \eqref{eq:O^k bound} and \eqref{eq:g preserves V_rho} that
$$\sum_{m=1}^\infty\left|\psi\left(\frac{1}{z'}-m\right)\right|\le \sum_{m=1}^\infty c_{1,\rho, k}\left|\frac{1}{z'}-m\right|^{-(k+2)}\sup_{V_\rho(D_\infty)}|\psi|$$
for some constant $c_{1,\rho, k}$.
Since $\sup_{z\in D_\infty}\re(1/z) =  1$ (see Figure~\subref{fig:iota Dinfty}), there exists $c_2>0$ such that $|1/z-m|^{-1}\le c_2m^{-1}$ for all $z\in D_\infty$ and $m\ge 2$.
Thus, a similar argument as the one showing \eqref{eq: sum m infty} in the proof of Proposition~\ref{pr:Tkest} yields that for all $z'\in V_\rho(D_\infty)$
$$\sum_{m=1}^{\infty}\left|\psi\left(\frac{1}{z'}-m\right)\right|\le {c_{3,\rho,k}}\sup_{ V_\rho(D_\infty)}|\psi|$$ 
for some constant ${c_{3,\rho, k}}$.

Also, there exists $c_{4,\rho}>0$ such that $|z-\frac{b'}{d'}|^{-1}\le c_{4,\rho}$ for all $z\in V_\rho(D_\infty)$ and $\frac{b'}{d'}\in[0,1]$. 
Here, $d'$ can be considered as the denominator of the $r$th principal convergent of some number.
Finally, 
\begin{equation}\label{eq: estim d'}
 \min_{g'\in \mathcal{M}^{(r-1)}}d'\geq \frac12\left(\frac{\sqrt{5}+1}{2}\right)^{r-1}\text{ and }
\sum_{g'\in\mathcal{M}^{(r-1)}}d'^{-2}\le c_5,
\end{equation}
for some $c_5>0$, 
see for example \cite[Prop. A1.1 in Appendix 1]{MMY3}.
Furthermore, since $g'\in \mathcal{M}^{(r-1)}$, we have that $\left|\det(g')^{k+1}\right|=1$.
Combining these results yields the statement of \eqref{en: 1Tkbound}.

In the following we will show \eqref{pr:Tkbound-item2} for $r\ge 2$, and we will discuss the case  $r=1$ separately.
Starting in the analogous way as in the proof of \eqref{en: 1Tkbound}, using
\eqref{eq: Lkgphi1}, we obtain for $\varphi \in \Occ{1}{[0,1]}$ that 
\begin{align*}
L_{k,g'}\left(T_k\varphi\right)
&=L_{k,g'}\left(\sum_{m=1}^\infty L_{k,g(m)}\varphi\right)\\
&=\frac{(c')^{-(k+1)}\det(g')^{k+1}}{k!}\,(a'-c'z)^{-1}\\
&\qquad\cdot\int_0^1(1-t)^k  \sum_{m=1}^\infty L_{k,g(m)}^{(k+2)}\varphi^{(k+1)}\left(-\frac{d'}{c'}+\frac{\det(g')t}{c'(a'-c'z)}\right)\mathrm{d}t.
\end{align*}

We have
$$\xi_{z,t}:=-\frac{d'}{c'}+\frac{\det(g')t}{c'(a'-c'z)} = -\frac{d'}{c'}\, t+\frac{\det(g')t}{c'(a'-c'z)}+\frac{d'}{c'}\, t-\frac{d'}{c'}
= tg'^{-1}.z - (1-t)\frac{d'}{c'}.$$
If we denote
$\iota(z):=1/z$, see Figure~\subref{fig:iota Dinfty}, then we
can inductively show that $g'^{-1}.z$ is contained in a $\rho$-neighbourhood of $\bigcup_{m\ge 1}(\iota(D_\infty)-m)\subset D_\infty$ with respect to the Poincar\'{e} metric.
Since $d'/c' \in (-\infty,-1]$, the line segment between $-d'/c'$ and $g^{-1}.z$ is contained in $V_\rho(D_\infty \cap  \bigcup_{m\ge 1}(\iota(D_\infty)-m))$.

Thus, there is a constant $c_{6,\rho}>0$ such that $|\xi_{z,t}|\ge c_{6,\rho}$. The constant $c_{6,\rho}$ increases when $\rho\to 0$. Since $\xi_{z,t}$ is in a $\rho$-neighbourhood of $i$, $-i$, or $\re(\xi_{z,t})\le 0 $, we have $|1/\xi_{z,t}-m|\ge m$.

Since $\varphi^{(k+1)}\in\Oc{k+2}$, by using \eqref{eq: Lkg deriv} and \eqref{eq:jth derivative}, we have
\begin{align*}
\sup_{z\in V_\rho(D_\infty)}|L_{k,g'}(T_k\varphi)| 
& \le \frac{(c')^{-(k+2)}}{k! \inf\limits_{z\in V_\rho(D_\infty)}|z-\frac{a'}{c'}|}\sum_{m\ge 1}\int_0^1\frac{|\varphi^{(k+1)}\left(1/\xi_{z,t}-m\right)|}{\left| \xi_{z,t} \right|^{k+2}}dt\\
& \le \frac{(c')^{-(k+2)}}{k! \inf\limits_{z\in V_\rho(D_\infty)}|z-\frac{a'}{c'}|}\sum_{m\ge 1}\frac{c_{1,k+1}'(k+1)! }{c_{6,\rho}^{k+2}(m-0.2)^{k+2}}\sup_{z\in V_\rho(D_\infty)}|\varphi(z)|\\
& \le \frac{c_{7,k}c_{1,k+1}'(k+1)}{c_{6,\rho}^{k+2}} \cdot \frac{(c')^{-(k+2)}}{\inf\limits_{z\in V_\rho(D_\infty)}|z-\frac{a'}{c'}|} \cdot \sup_{z\in V_\rho(D_\infty)}|\varphi(z)|,
\end{align*}
where $c_{7,k} = \sum_{m\ge 1} (m-0.2)^{-(k+2)}$.

Since $a'/c'\in[0,1]$, we have $|z-\frac{a'}{c'}|^{-1}\le c_{4,\rho}$ for $z\in V_\rho(D_\infty)$.
Using \eqref{eq: estim d'} and \eqref{eq:jth derivative}, we obtain
\begin{align}
\sup_{z\in V_\rho(D_\infty)}|(T_k^r \varphi)(z)| 
& \le \frac{2^k c_{7,k} c_{1,k+1}'(k+1)}{c_{6,\rho}^{k+2}}\left(\frac{\sqrt{5}-1}{2}\right)^{k(r-2)}\sum_{g'\in\cM^{(r-1)}}(c')^{-2} \cdot \sup_{z\in V_\rho(D_\infty)}|\varphi(z)|\notag\\
& \le \overline{C}_{\rho,k, \geq 2}\left(\frac{\sqrt{5}-1}{2}\right)^{rk} \sup_{z\in V_\rho(D_\infty)}|\varphi(z)|,\label{eq: estim r geq 2}
\end{align}
where
$\overline{C}_{\rho,k, \geq 2} = 2^k (k+1) (\frac{\sqrt{5}+1}{2})^{2k} c_{1,k+1}'c_5c_{4,\rho}c_{6,\rho}^{-(k+2)}c_{7,k}$, implying \eqref{pr:Tkbound-item2} for $r\geq 2$.

For the case $r=1$, we will estimate 
$$|T_k\varphi (z)| \le \sum_{m\ge 1} |L_{k,g(m)}\varphi(z)|$$
by using \eqref{eq: Lkg(m)phi} and \eqref{eq: Lkg(m)phi1}.
We consider $\ell$ denoting the the right arc of $\partial D_\infty$ (which is the arc of the circle $\left|z-\frac{\sqrt{3}}{3}\right| = \frac{\sqrt{3}}{3}$ between $\frac{\sqrt{3}}{2}+\frac{1}{2}i$ and $\frac{\sqrt{3}}{2}-\frac{1}{2}i$ containing $\frac{2\sqrt{3}}{3}$), see Figure~\subref{fig:Dinf}.
Since $1/z\in V_\rho(\iota(D_\infty))$, from the shape of $\iota(D_\infty)$, we can conclude that $t/z\in V_\rho(\iota(D_\infty))$ for $0\le t\le 1$.
Thus, $t/z-m\in V_\rho(\iota(D_\infty)-m)$.
If $m\ge 2$ or $z$ is not in the neighbourhood of $\ell$, then there is $c>0$ such that $|t/z-m|\ge cm$, thus by Cauchy's estimate as in \eqref{eq:jth derivative}, we have
$$\left|\varphi^{(k+1)}\left(\frac tz-m\right)\right| \le c_{1,k+1}'\frac{(k+1)!\sup_{V_\rho(D_\infty)}|\varphi|}{cm^{k+2}}.$$
Thus, if $z$ is not in the neighbourhood of $\ell$, then by using \eqref{eq: Lkg(m)phi1}, we have
\begin{align}
 \label{eq:Tk estimate} |T_k\varphi (z)| 
& \le \frac{1}{c_{6,\rho}k!} \sum_{m\ge 1} c_{1,k+1}'\frac{(k+1)!\sup_{V_\rho(D_\infty)}|\varphi|}{cm^{k+2}} \\
& \le \overline{C}_{\rho,k}' \left(\frac{\sqrt{5}-1}{2}\right)^k \sup_{V_\rho(D_\infty)}|\varphi|,\notag
\end{align}
where $\overline{C}_{\rho,k}' = \left(\frac{\sqrt{5}-1}{2}\right)^{-k}(k+1)c_{1,k+1}' c_{6,\rho}^{-1} c^{-1}\sum_{m\ge 1}\frac{1}{m^{k+2}}$.
We are left to show the case that $z$ is in a neighbourhood of $\ell$.
For $z\in V_\rho(D_\infty)$ in a neighbourhood of $\ell$, we can assume that $|z|\le 2$.
Thus, by \eqref{eq: Lkg(m)phi} and \eqref{eq:jth derivative}, we have
\begin{align}
|L_{k,g(1)}\varphi(z)|
& \le |z|^k\left|\varphi\left(\frac{1}{z}-1\right)\right|+ \sum_{n=0}^k\frac{|z|^{k-n}}{n!}|\varphi^{(n)}(-1)| \notag \\
& \le |z|^k \sup_{V_\rho(D_\infty)}|\varphi|  + \sum_{n=0}^k \frac{|z|^{k-n}}{n!} c_{1,n}' \frac{n!\sup_{V_\rho(D_\infty)}|\varphi|}{0.8^{n+1}} \notag \\
& \label{eq:Lkg(1)estimate near arc} \le \left(|z|^k + \sum_{n=1}^k \frac{c_{1,n}' |z|^{k-n}}{0.8^{n+1}} \right)\sup_{V_\rho(D_\infty)}|\varphi|\le C_k\left(\frac{\sqrt{5}-1}{2}\right)^{k}\sup_{V_\rho(D_\infty)}|\varphi|,
\end{align}
where $C_k = \left(\frac{\sqrt{5}-1}{2}\right)^{-k}2^k \left(1+ \sum_{n=1}^k \frac{c_{1,n}'}{2^n 0.8^{n+1}}\right)$.
By replacing the summand of $m=1$ in \eqref{eq:Tk estimate} with \eqref{eq:Lkg(1)estimate near arc} and combining this result with the estimate for $r\geq 2$ in \eqref{eq: estim r geq 2}, we have the conclusion when we take $\overline{C}_{\rho,k} = \max\{C_k + \overline{C}_{\rho,k}', \overline{C}_{\rho,k,\geq 2}\}$.
\end{proof}

\section{Proof of Theorem \ref{thm:ImWilton}.}

Analogously as in \cite{MMY3} the strategy of the proof of the theorem is to start from the formula
\begin{align}\label{eq: Im Wilton start}
 \im \mathcal{W}(z)=\sum_{n\in {\Z}} \sum_{\ell\ge 0}
\im S^\ell \wvarphi_1(z+n).
\end{align}
We will only give the parts of the proof which are different.

We have the following lemmas and propositions which are analogs to \cite[Lem.~5.11, 5.14 and Prop.~5.15, 5.16]{MMY3} in the same way by using $S=-T$ and substituting $\varphi_1$ to $\wvarphi_1$.
For the following we denote for a bounded closed interval $I \subset \R$, the space of
functions $\varphi\in\Occ{1}{I}$ whose real part is bounded we denote by $E(I)$, endowed with the norm $\|\varphi\|_{E(I)} =\sup_{\C\backslash I} |\re\varphi|$. If it is clear which interval $I$ is meant, we also write $\|\varphi\|$ instead of $\|\varphi\|_{E(I)}$.
Note that $\wvarphi_1\in E([1/2,2])$. If $\psi\in E([1/2,2])$ is real on the real axis outside $[1/2,2]$, then $\psi$ holds the arguments about $\varphi_1$ in the proof of \cite[Lem.~5.11, 5.14 and Prop.~5.15, 5.16]{MMY3}.

\begin{lem}\label{lem:5.11}
For $z\in\Delta\setminus[0,1]$, if we write
\begin{align*}
\im\cW(z)=& \im\wvarphi_1(z)+\im\wvarphi_1(z+1)+\im\wvarphi_1(z+2) + \im S\wvarphi_1(z-1) + \im S\wvarphi_1(z) \\
& +\sum_{\ell>1}\big[\im S^\ell\wvarphi_1(z-1) + \im S^\ell\wvarphi_1(z)+\im S^\ell\wvarphi_1(z+1) \big]+r^{0}(z),
\end{align*}
then we have
$$|r^{0}(z)|\le C|\im(z)|\| \wvarphi_1\|_{E([1/2,2])}.$$
\end{lem}

\begin{lem}\label{lem:5.14}
For $z\in\Delta\setminus[0,1]$, we have
\begin{equation}\begin{split}\label{eq:5.14}
\im\cW(z) =& (1-\veps_1)\im \wvarphi_1(z) +\im \wvarphi_1(z+1) +[1-\veps_2(1-\veps_1)]\im S \wvarphi_1(z) \\
& +\sum_{\ell>1}\im S^\ell\wvarphi_1(z) + r^{0}(z) + r^{1}(z),
\end{split}\end{equation}
and
$$|r^{1}(z)|\le C|\im (z)|\log(1+|\im (z)|^{-1})\|\wvarphi_1\|_{E([1/2,2])}.$$
\end{lem}

\begin{prop}\label{prop:5.15}
For $n\ge 1$, $m_{1},\ldots ,m_{n}\ge 1$, $z_{0}\in D (m_{1},\ldots ,m_{n})$, $\ell>0$ we have
\begin{align*}
 \MoveEqLeft\left|(-1)^{n}\im \left(S^{n+\ell}\wvarphi_{1}(z_{0})\right)-\left[\im\left( S^{\ell}\wvarphi_{1}(z_{n})\right)\right]
(p_{n-1}-q_{n-1}\re( z_{0}))\right|\\
&\le Cq_{n}^{-1}|\im (z_{n})|\log (1+|\im (z_{n})|^{-1})\times
\begin{cases}
\Vert S^{\ell-2}\wvarphi_{1}\Vert &\text{if }\ell>1,\\
\Vert \wvarphi_{1}\Vert &\text{if }\ell=1. \end{cases}
\end{align*}
\end{prop}
\begin{prop}\label{prop:5.16}
For $n\ge 1$, $m_{1},\ldots ,m_{n}\ge 1$, $z_{0}\in D(m_{1},\ldots ,m_{n})$ we have 
\begin{align*}
 \MoveEqLeft|(-1)^{n}\im (S^n\wvarphi_{1}(z_{0}))-(p_{n-1}-q_{n-1}\re( z_{0}))[
(1-\varepsilon_{n+1})\im (\wvarphi_{1}(z_{n}))+\varepsilon_{n}\im ( \wvarphi_{1}(z_{n}+1))]|\\
&\le Cq_{n}^{-1}|\im ( z_{n})|\log (1+|\im( z_{n}) |^{-1})\Vert\wvarphi_{1}\Vert \; . \cr
\end{align*}
\end{prop}

The following is an analog to \cite[Prop.~5.17]{MMY3}. We remember the definition of $H(m_1,\cdots,m_n)$ given in \eqref{eq: def H}.
\begin{prop}\label{prop:5.17}
For $n \ge 0$, $m_{1},\ldots ,m_{n}\ge 1$,
$z_0\in H(m_{1},\ldots ,m_{n})$ we have
\begin{align*}
\im\mathcal{W}(z_0)=&\sum_{\ell=0}^{n} {\varepsilon_{\ell}} {(-1)^\ell}(p_{\ell-1}-q_{\ell-1}\re(z_0)) \im\wvarphi_1(z_\ell+1)\\
&+\sum_{\ell=0}^{n-1}(1-\varepsilon_{\ell+1}) {(-1)^\ell}(p_{\ell-1}-q_{\ell-1}\re(z_0)) \im\wvarphi_1(z_\ell) + r_{n}(z_0),
\end{align*}
with $\veps_0=1$, and $|r_{n}(z_0)|\le Cq_n^{-1}|z_n|\log(1+|z_n|^{-1})$.
\end{prop}

\begin{proof}
Since $\wvarphi_1$ is real on the real axis outside $[0,1]$, by Proposition~\ref{prop:4.1}, when $z\in H$, 
\begin{equation}\label{eq:5.28}
|\im S^{n}\wvarphi_1(z)|\lesssim |z|\log(1+|z|^{-1})\sup_{D_\infty}|S^{n-1}\wvarphi_1|.
\end{equation}
We obtain $\sum_{n=1}^{\infty} \sup_{D_\infty}|S^{n} \wvarphi_1|<\infty$ by using Proposition~\ref{pr:Tkbound}-\eqref{pr:Tkbound-item2}.
Thus, from \eqref{eq:5.28} and Lemma~\ref{lem:5.14}, we get
\begin{equation}\label{eq:prop6.10-en1}
\im \cW(z) = (1-\veps_1)\im\wvarphi_1(z) + \im \wvarphi_1(z+1)+r(z),
\end{equation}
with $|r(z)|\lesssim |z|\log(1+|z|^{-1}).$

Let us consider the case of $z_0\in H$ which is corresponding to the case of $n=0$. Since we set $\veps_1=1$ for $z_0\in \Delta\setminus D$, from \eqref{eq:prop6.10-en1}, we obtain
$$|\im\cW(z_0)-\im\wvarphi_1(z_0+1)|\lesssim |z_0|\log(1+|z_0|^{-1}).$$

Next, we consider $z_0\in H(m_1,\cdots,m_{n})$ with $n\ge 1$ and $m_1,\cdots,m_{n}\ge 1$.
By Lemma~\ref{lem:5.14}, we need to show that the modulus of 
\begin{equation}\label{eq:prop5.17}\begin{split}
& \Big(\sum_{\ell=n+1}^\infty \im S^\ell \wvarphi_1(z_0) \Big) + (-1)^{n}(p_{n-1}-q_{n-1}\re(z_0))(1-\veps_{n+1}) \im\wvarphi_1(z_{n})+r^{0}(z_{0})+r^{1}(z_{0})\\
& + \Big[(1-\veps_2(1-\veps_1))\im S \wvarphi_1(z_0) + (p_0-q_0\re(z_0))(\veps_1\im\wvarphi_1(z_1+1)+(1-\veps_2)\im\wvarphi_1(z_1))\Big]\\
& + \sum_{\ell=2}^{n} \Big[\im S^\ell\wvarphi_1(z_0) - (-1)^\ell (p_{\ell-1}-q_{\ell-1}\re(z_0))\big( \varepsilon_{\ell}\im\wvarphi_1(z_\ell+1) + (1-\varepsilon_{\ell+1})\im\wvarphi_1(z_\ell)\big)\Big] \\
\end{split}\end{equation}
is in $O(q_{n}^{-1}|z_{n}|\log(1+|z_{n}|^{-1}))$.
From \cite[Prop.~4.11]{MMY3} using $S=-T$, we have $\sum_{j=0}^\infty\|S^j\wvarphi_1\|<\infty$.
Then, from Proposition~\ref{prop:5.15}, we have
\begin{align*}
& \Big|\Big(\sum_{\ell=n+1}^\infty\im S^\ell\wvarphi_1(z_0) \Big)-(p_{n-1}-q_{n-1}\re(z_0))\sum_{\ell=1}^\infty\im S^\ell\wvarphi_1(z_{n})\Big| \\
& \lesssim q_{n}^{-1}|\im (z_{n})|\log(1+|\im (z_{n})|^{-1}).
\end{align*}
Since $z_{n}\in H$, by using \eqref{eq:5.28}, the first term of \eqref{eq:prop5.17} is bounded by 
$$\sum_{\ell=n+1}^\infty\big|\im S^\ell\wvarphi_1(z_0) \big|\lesssim q_{n}^{-1}|z_{n}|\log(1+|z_{n}|^{-1}).$$

For $z\in H$, 
we have 
\begin{equation}\label{eq:im phi_1} |\im\wvarphi_1(z)|\lesssim |\im(z)|. \end{equation}

For the second term of \eqref{eq:prop5.17}, by using \eqref{eq:im phi_1} and \eqref{eq:dirichlet}, we have
$$|(p_{ n-1}-q_{n}-1\re(z_0))\im\wvarphi_1(z_{n})(1-\veps_{n+1})|\lesssim q_{n}^{-1}|\im(z_{n})|.$$

We have $|r^{0}(z_0)+r^{1}(z_0)|\lesssim |\im(z_0)|\log(1+|\im(z_0)|^{-1})\|\wvarphi_1\|_{E([1/2,2])}$ in Lemma~\ref{lem:5.11} and \ref{lem:5.14}.
Since 
$$[1-\veps_2(1-\veps_1)](1-\veps_2)=1-\veps_2\quad\text{ and }\quad [1-\veps_2(1-\veps_1)]\veps_1=\veps_1,$$
the third term of \eqref{eq:prop5.17} is $0$ or 
$$\left[\im S\wvarphi_1(z_0)+(p_0-q_0\re(z_0))(\veps_1\im\wvarphi_1(z_1+1)+(1-\veps_2)\im\wvarphi_1(z_1))\right].$$
By using Proposition~\ref{prop:5.16}, the last terms of \eqref{eq:prop5.17} are in
$$O\left(\sum_{\ell=0}^{n} q_\ell^{-1}|\im(z_\ell)|\log(1+|\im(z_\ell)|^{-1})\|\wvarphi_1\|_{E([1/2,2])}\right)$$
and we have 
$$\sum_{\ell=0}^{n} q_\ell^{-1}|\im(z_\ell)|\log(1+|\im(z_\ell)|^{-1})\le q_{n}^{-1}|\im(z_{n})|\log(1+|\im(z_{n})|^{-1})$$
as in the last part of the proof of \cite[Prop.~5.17]{MMY3}.
Since $|\re(z_0)|\le 1$, we have the conclusion.
\end{proof}

\begin{proof}[Proof of Theorem~\ref{thm:ImWilton}]
Let $n\ge 0$ and $m_1,\cdots, m_n\ge1$.
The domain $H(m_1,\cdots,m_n)$ meets $\R$ in a unique point $p_n/q_n$.
Let us denote $x_0=p_n/q_n$ and consider its continued fraction
$$x_i^{-1}=m_{i+1}+x_{i+1},\quad 0\le i<n \quad \text{and} \quad x_n=0,$$
here, $x_i$ is the point of intersection of $H(m_{i+1},\cdots,m_n)$ with $\R$.

Let $z_0\in H(m_1,\cdots,m_n)$ and $\im(z_0)>0$.
We will then have $(-1)^\ell \im(z_\ell)>0$ for $0\le\ell\le n$.
From \eqref{eq:Imphi1}, we deduce that 
\begin{equation}\label{eq:Im wphi x_l+1}
 \im\wvarphi_1 (x_\ell+1+(-1)^\ell i0) = (-1)^{\ell}\log\dfrac{1}{x_\ell}, \quad \text{ for } ~ 0\le \ell <n.
\end{equation}
Since $\frac{x_\ell}{1-x_\ell}=\frac1{x_{\ell+1}}$ if $\veps_{\ell+1}=0$, we obtain from \eqref{eq:Imphi1}
\begin{equation}\label{eq:Im wphi x_l}
(1-\veps_{\ell+1})\im\wvarphi_1(x_\ell+(-1)^\ell i0) = (1-\veps_{\ell+1})(-1)^{\ell+1} x_\ell \log\dfrac{1}{x_{\ell+1}}, \quad \text{ for }~0\le \ell< n-1. 
\end{equation}

The above equations imply for $W_{\text{finite}}$ extended to the complex plane that
\begin{equation}\label{eq:W finite}
\begin{split}
W_{\text{finite}}(p_n/q_n)  
= & \sum_{\ell=0}^{n-1}\veps_\ell (q_{\ell-1}x_0-p_{\ell-1})(-1)^{\ell} \im\wvarphi_1(x_\ell+1+(-1)^\ell i0) \\
   & + \sum_{\ell=0}^{n-2} (1-\veps_{\ell+1}) (q_{\ell-1} x_0-p_{\ell-1})(-1)^{\ell} \im\wvarphi_1(x_\ell+(-1)^\ell i0).
\end{split}
\end{equation}

Combining Proposition~\ref{prop:5.17} with \eqref{eq:W finite}, we get
\begin{equation}\begin{split}\label{eq:thm4.11}
& \im\cW(z_0)+W_{\text{finite}}(p_n/q_n)-(-1)^n(p_{n-1}-q_{n-1}\re(z_0))\im\wvarphi_1(z_n+1) - r_n(z_0) \\
& =  \sum_{\ell=0}^{n-1} \varepsilon_{\ell} {(-1)^\ell} \Big[ (p_{\ell-1}-q_{\ell-1}\re(z_0)) \im\wvarphi_1(z_\ell+1)
- (p_{\ell-1}-q_{\ell-1}x_0)\im\wvarphi_1(x_\ell+1+(-1)^\ell i0) \Big] \\
& +\sum_{\ell=0}^{n-2}(1-\varepsilon_{\ell+1}) {(-1)^\ell} \Big[ (p_{\ell-1}-q_{\ell-1}\re(z_0)) \im\wvarphi_1(z_\ell)
- (p_{\ell-1}-q_{\ell-1} x_0) \im\wvarphi_1(x_\ell+(-1)^\ell i0) \Big] \\
& - (1-\veps_n)(-1)^n\Big[ (p_{n-1}-q_{n-1}\re(z_0))\im\wvarphi_1(z_{n}+1)
 + (p_{n-2}-q_{n-2}\re(z_0))\im(\wvarphi_1(z_{n-1})) \Big].
\end{split}\end{equation}

Thus, we have to deal with the following expressions.
For $0\le \ell \le n-1$ such that $\veps_\ell=1$, 
$$A_\ell := (p_{\ell-1}-q_{\ell-1}\re(z_0)) \im\wvarphi_1(z_\ell+1) - (p_{\ell-1}-q_{\ell-1}x_0)\im\wvarphi_1(x_\ell+1+(-1)^\ell i0).$$
For $0\le \ell \le n-2$ such that $\veps_{\ell+1}=0$,
$$B_\ell := (p_{\ell-1}-q_{\ell-1}\re(z_0)) \im\wvarphi_1(z_\ell) - (p_{\ell-1}-q_{\ell-1} x_0) \im\wvarphi_1(x_\ell+(-1)^\ell i0).$$
When $\veps_n=0$,
$$C_n := (p_{n-1}-q_{n-1}\re(z_0))\im\wvarphi_1(z_{n}+1) + (p_{n-2}-q_{n-2}\re(z_0))\im\wvarphi_1(z_{n-1}).$$

The first $n$ digits of the continued fraction of $\re(z_0)$ are also $m_1,\cdots,m_n$.
This gives 
$$|\re(z_0)-x_0|\le |z_0 - p_n/q_n| = q_n^{-1}|q_{n-1}z_0-p_{n-1}||z_n| \le q_n^{-2}|z_n|.$$

Since \eqref{eq:wphi_1 Li} implies that
$$\wvarphi_1'(z) = \frac{1}{\pi}\left( - \Li{\frac{z}{1-z}} + \frac{1}{1-z}\log\left(\frac{1-2z}{1-z}\right) + \frac{1}{1-z}\log\left(\frac{z-2}{z-1}\right)\right) - \frac{\pi}{12},$$
we have 
\begin{equation}\label{eq:wphi' near1}
|\wvarphi_1'(z)|\le C|z-1|^{-1}\log\frac{1}{|z-1|} \quad \text{near }1.
\end{equation}
For $0\le \ell \le n$, the distance of $x_\ell$ and $z_\ell$ from $0$ are comparable.
Thus, for $0\le \ell \le n$, 
$$|\im\wvarphi_1(z_\ell+1)-\im\wvarphi_1(x_\ell + 1 + (-1)^\ell i 0)|\le Cx_\ell^{- 1}\log{\frac{1}{x_\ell}}|z_\ell - x_\ell|,$$
and, for $0\le \ell \le n-1$, 
$$|\im\wvarphi_1(z_\ell)-\im\wvarphi_1(x_\ell + (-1)^\ell i 0)|\le C|x_\ell-1|^{-1}\log\frac{1}{1-x_\ell}|z_\ell - x_\ell|.$$

We have $|z_\ell - x_\ell|\lesssim |z_n|q_\ell^2q_n^{-2}$, $x_\ell^{-1}\lesssim m_{\ell+1}$ and $|x_{\ell}-1|^{-1}\lesssim x_{\ell+1}^{-1}\lesssim m_{\ell+2}$.
Thus, according to \eqref{eq:Im wphi x_l+1} and \eqref{eq:Im wphi x_l}, for $0\le \ell<n$, we have
\begin{align*}
|A_\ell|  & = \Big|(p_{\ell-1}-q_{\ell-1}\re(z_0)) \Big(\im\wvarphi_1(z_\ell+1) - \im\wvarphi_1(x_\ell+1+(-1)^\ell i 0)\Big) \\
& \qquad +   q_{\ell-1}(\re(z_0)- x_0)\im\wvarphi_1(x_\ell+1+(-1)^\ell i0) \Big| \\
& \lesssim x_\ell^{-1}\log{\frac{1}{x_\ell}}|z_\ell-x_\ell|q_\ell^{-1} + q_{\ell-1}q_n^{-2}|z_n|\log\frac1{x_\ell} \\
& \lesssim |z_n|q_n^{-2}(q_\ell  m_{\ell+1} + q_{\ell-1}) \log{(m_{\ell+1})} \lesssim |z_n|q_n^{-2}q_{\ell+1}\log{(m_{\ell+1})},
\end{align*}
and, for $0\le\ell<n-1$ such that $\veps_{\ell+1}=0$, 
\begin{align*}
|B_\ell|  
& = \Big|(p_{\ell-1}-q_{\ell-1}\re(z_0)) \Big(\im\wvarphi_1(z_\ell) - \im\wvarphi_1(x_\ell+(-1)^\ell i 0)\Big) \\
&\qquad +   q_{\ell-1}(\re(z_0)- x_0)\im\wvarphi_1(x_\ell+(-1)^\ell i0) \Big| \\
& \lesssim | p_{\ell-1}-q_{\ell-1}\re(z_0) |  |x_\ell-1|^{-1} \log \frac{1}{|x_\ell-1|} |z_\ell - x_\ell| + q_{\ell-1}q_n^{-2}|z_n| \log\frac{1}{x_{\ell+1}} \\
& \lesssim |x_\ell-1|^{-1}\log\frac{1}{|x_\ell-1|}|z_\ell-x_\ell|q_\ell^{-1} + q_{\ell-1}q_n^{-2}|z_n|\log\frac1{x_{\ell+1}} \\
& \lesssim |z_n|q_n^{-2}(q_\ell  m_{\ell+2}  + q_{\ell-1}) \log(m_{\ell+2})  \lesssim |z_n|q_n^{-2}q_{\ell+2} \log(m_{\ell+2}).
\end{align*}
From $q_n\ge  2^{\lfloor (n-\ell)/2 \rfloor}q_{\ell}$, it follows that
$$\sum_{\ell=0}^{n-1}|A_\ell|+\sum_{\ell=0}^{n-2}|B_\ell| \lesssim |z_n|\,\frac{\log{q_n}}{q_n}.$$
Furthermore, when $\veps_n=0$, we have
$$\wvarphi_1(z_{n-1}) =  z_{n-1} \left(\varphi_0\left(z_n\right)-\varphi_0(-1)\right)-\varphi_0'(-1){+}\varphi_0(z_{n-1}-1).$$
$$\wvarphi_1(1+z_{n}) =  z_{n-1}^{-1} \left(\varphi_0\left(z_{n-1}-1\right)-\varphi_0(-1)\right)-\varphi_0'(-1){+}\varphi_0(z_{n}).$$
Since $$\wvarphi_1(z_{n-1}) =  (1-z_{n-1})\left(\frac\pi{12}-\frac{1}{\pi}\log2\right) + z_{n-1}\wvarphi_1(1+z_n),$$
we have
\begin{align*}
 C_n 
 & =  \im\wvarphi_1(z_{n}+1)  [(p_{n-1}-q_{n-1}\re(z_0)) + (p_{n-2}-q_{n-2}\re(z_0))\re(z_{n-1})] \\
 & + (p_{n-2}-q_{n-2}\re(z_0))\im(z_{n-1})\left[-\left(\frac\pi{12}-\frac{1}{\pi}\log2\right) +\re\wvarphi_1(1+z_n))\right]. \\
 \end{align*}

By \eqref{eq:Lem6.4}, we have
$\left|\wvarphi_1(1+z_n))\right|\lesssim \log^2(1+|z_n|^{-1}).$
As in the proof of \cite[Theorem 5.10]{MMY3},
$$|(p_{n-2}-q_{n-2}\re(z_0))\im(z_{n-1})|\lesssim q_{n}^{-1}|\im(z_n)|,$$
and
$$[(p_{n-1}-q_{n-1}\re(z_0)) + (p_{n-2}-q_{n-2}\re(z_0))\re(z_{n-1})] \lesssim q_{n}^{-1}|\im(z_n)|.$$
Thus, we have $|C_{n}|\lesssim q_{n}^{-1}|\im(z_n)|\log^2(1+|z_n|^{-1})$. 
\end{proof}


\begin{thebibliography}{BBLS05}

\bibitem[BBLS05]{BBLS}
L.~{B\'{a}ez-Duarte}, M.~Balazard, B.~Landreau, and E.~Saias.
\newblock \'{E}tude de l'autocorr\'{e}lation multiplicative de la fonction
  `partie fractionnaire'.
\newblock {\em Ramanujan J.}, 9(1-2):215--240, 2005.

\bibitem[BC06]{buff2006brjuno}
X.~Buff and A.~Ch{\'e}ritat.
\newblock The {B}rjuno function continuously estimates the size of quadratic
  siegel disks.
\newblock {\em Ann. Math.}, 164(1):265--312, 2006.

\bibitem[BG01]{berretti2001bryuno}
A.~Berretti and G.~Gentile.
\newblock Bryuno function and the standard map.
\newblock {\em Comm. Math. Phys.}, 220(3):623--656, 2001.

\bibitem[BM12]{BM1}
M.~Balazard and B.~Martin.
\newblock {C}omportement local moyen de la fonction de {B}rjuno.
\newblock {\em Fund. Math.}, 218(3):193--224, 2012.

\bibitem[BM13]{BM2}
M.~Balazard and B.~Martin.
\newblock {S}ur l'autocorr\'{e}lation multiplicative de la fonction ``partie
  fractionnaire'' et une fonction d\'{e}finie par {J}. {R}. {W}ilton.
\newblock Preprint, arXiv:1305.4395, 2013.

\bibitem[BM19]{balazard_equations_2019}
M.~Balazard and B.~Martin.
\newblock Sur certaines \'{e}quations fonctionnelles approch\'{e}es, li\'{e}es
  \`a la transformation de {G}auss.
\newblock {\em Aequationes Math.}, 93(3):563--585, 2019.

\bibitem[Brj71]{B}
A.D. Brjuno.
\newblock {A}nalytic form of differential equations. {I} ({R}ussian).
\newblock {\em Trudy Moskov. Mat. Ob\v{s}\v{c}.}, 25:119--262, 1971.

\bibitem[Brj72]{Br2}
A.D. Brjuno.
\newblock {A}nalytic form of differential equations. {II} ({R}ussian).
\newblock {\em Trudy Moskov. Mat. Ob\v{s}\v{c}.}, 26:199--239, 1972.

\bibitem[CC15]{cheraghi2015proof}
D.~Cheraghi and A.~Ch{\'e}ritat.
\newblock A proof of the {M}armi--{M}oussa--{Y}occoz conjecture for rotation
  numbers of high type.
\newblock {\em Invent. Math.}, 202(2):677--742, 2015.

\bibitem[Dav94]{davie1994critical}
A.M. Davie.
\newblock The critical function for the semistandard map.
\newblock {\em Nonlinearity}, 7(1):219, 1994.

\bibitem[Gar07]{garnett2007bounded}
J.~Garnett.
\newblock {\em Bounded analytic functions}, volume 236.
\newblock Springer Science \& Business Media, 2007.

\bibitem[GD11]{garcia2011weighted}
J.~{Garc{\'\i}a-Cuerva} and J.R. {De Francia}.
\newblock {\em Weighted norm inequalities and related topics}.
\newblock Elsevier, 2011.

\bibitem[Gen15]{gentile2015invariant}
G.~Gentile.
\newblock Invariant curves for exact symplectic twist maps of the cylinder with
  {B}ryuno rotation numbers.
\newblock {\em Nonlinearity}, 28(7):2555, 2015.

\bibitem[Hen06]{He}
D.~Hensley.
\newblock The {H}urwitz complex continued fraction.
\newblock {\em preprint, January}, 9, 2006.

\bibitem[Hur87]{Hu}
A.~Hurwitz.
\newblock {\"U}ber die {E}ntwicklung complexer {G}r{\"o}ssen in
  {K}ettenbr{\"u}che.
\newblock {\em Acta Math.}, 11(1-4):187--200 

\bibitem[JM18]{jaffard_multifractal_2018}
S.~Jaffard and B.~Martin.
\newblock Multifractal analysis of the {B}rjuno function.
\newblock {\em Invent. Math.}, 212(1):109--132, 2018.

\bibitem[Kno90]{Kn}
M.~Knopp.
\newblock Modular integrals and their {M}ellin transforms.
\newblock In {\em Analytic number theory: Proceedings of a Conference in Honor
  of Paul T. Bateman}, volume~85 of {\em Progr. Math.}, pages 327--342.
  Birkh{\"{a}}user, Boston, 1990.

\bibitem[Mac88]{mackay_exact_1988}
R.S. MacKay.
\newblock Exact results for an approximate renormalisation scheme and some
  predictions for the breakup of invariant tori.
\newblock {\em Phys. D}, 33(1-3):240--265, 1988.
\newblock Progress in chaotic dynamics.

\bibitem[Mac89]{mackay_erratum}
R.S. MacKay.
\newblock Erratum: ``{E}xact results for an approximate renormalisation scheme
  and some predictions for the breakup of invariant tori''.
\newblock {\em Phys. D}, 36(3):358, 1989.

\bibitem[Mar90]{marmi_1990}
S.~Marmi.
\newblock Critical functions for complex analytic maps.
\newblock {\em J. Phys. A: Math. Gen.}, 23(15):3447--3474, 1990.

\bibitem[MMY95]{MMY4}
S.~Marmi, P.~Moussa, and J.-C. Yoccoz.
\newblock Continued fractions transformations, {B}rjuno functions and {B}{M}{O}
  spaces.
\newblock {\em preprint, doi=10.1.1.32.4860, \emph{previous version of MMY95
  containing some extended calculations}}, 1995.

\bibitem[MMY97]{MMY1}
S.~Marmi, P.~Moussa, and J.-C. Yoccoz.
\newblock The {B}rjuno functions and their regularity properties.
\newblock {\em Commun. Math. Phys.}, 186(2):265--293, 1997.

\bibitem[MMY01]{MMY3}
S.~Marmi, P.~Moussa, and J.-C. Yoccoz.
\newblock Complex {B}rjuno functions.
\newblock {\em J. Amer. Math. Soc.}, 14(4):783--841 (electronic), 2001.

\bibitem[MMY06]{MMY2}
S.~Marmi, P.~Moussa, and J.-C. Yoccoz.
\newblock Some properties of real and complex {B}rjuno functions.
\newblock In {\em {F}rontiers in number theory, physics, and geometry. {I}},
  pages 601--623. Springer, Berlin, 2006.

\bibitem[MS92]{marmi1992standard}
S.~Marmi and J.~Stark.
\newblock On the standard map critical function.
\newblock {\em Nonlinearity}, 5(3):743, 1992.

\bibitem[Nak81]{N}
H.~Nakada.
\newblock Metrical theory for a class of continued fraction transformations and
  their natural extensions.
\newblock {\em Tokyo J. Math.}, 4(2):399--426 

\bibitem[Pet14]{P2}
I.~Petrykiewicz.
\newblock {N}ote on the differentiability of {F}ourier series arising from
  {E}isenstein series.
\newblock {\em C. R. Math. Acad. Sci. Paris}, 352(4):273--276, 2014.

\bibitem[Pet17]{P3}
I.~Petrykiewicz.
\newblock {D}ifferentiability of {F}ourier series arising from {E}isenstein
  series.
\newblock {\em Ramanujan J.}, 42(3):527--581, 2017.

\bibitem[Sar78]{sarason}
D.~Sarason.
\newblock {\em Function theory on the unit circle}.
\newblock Virginia Polytechnic Institute and State University, Department of
  Mathematics, Blacksburg, Va., 1978.
\newblock Notes for lectures given at a Conference at Virginia Polytechnic
  Institute and State University, Blacksburg, Va., June 19--23, 1978.

\bibitem[Sie42]{siegel1942}
C.L. Siegel.
\newblock Iteration of analytic functions.
\newblock {\em Ann. Math.}, 43(4):607--612, 1942.

\bibitem[TI81]{TanakaIto}
S.~Tanaka and S.~Ito.
\newblock On a family of continued-fraction transformations and their ergodic
  properties.
\newblock {\em Tokyo J. Math.}, 4(1):153--175, 1981.

\bibitem[Wil33]{W}
J.R. Wilton.
\newblock {A}n approximate functional equation with applications to a problem
  of {D}iophantine approximation.
\newblock {\em J. Reine Angew. Math.}, 169:219--237, 1933.

\bibitem[Yoc88]{Y}
J.-C. Yoccoz.
\newblock Lin\'earisation des germes de diff\'eomorphismes holomorphes de
  {$({\bf C}, 0)$}.
\newblock {\em C. R. Acad. Sci. Paris S\'er. I Math.}, 306(1):55--58, 1988.

\bibitem[Yoc95]{yoccoz1995}
J.-C. Yoccoz.
\newblock Petits diviseurs en dimension 1.
\newblock {\em Ast{\'e}risque}, 231(3), 1995.

\bibitem[Yoc02]{yoccoz2002}
J.-C. Yoccoz.
\newblock Analytic linearization of circle diffeomorphisms.
\newblock In {\em Dynamical systems and small divisors}, pages 125--173.
  Springer, 2002.

\bibitem[Zag92]{Z2}
D.~Zagier.
\newblock Introduction to modular forms.
\newblock In {\em From number theory to physics ({L}es {H}ouches, 1989)}, pages
  238--291. Springer, Berlin, 1992.

\end{thebibliography}
\end{document}